\documentclass[smallextended]{svjour3}  
\smartqed  
\usepackage{graphicx}
\usepackage[utf8]{inputenc} 
\usepackage[T1]{fontenc} 
\usepackage{hyperref}  
\usepackage{url}  
\usepackage{microtype}
\usepackage{array}
\usepackage{color} 
\usepackage{tabularx}
\usepackage{amsmath,mathdots}
\usepackage{amssymb}
\usepackage{amsfonts}
\usepackage{txfonts}
\usepackage{xcolor}
\usepackage{bm}
\usepackage{soul}
\usepackage{float}
\usepackage{cite}
\usepackage{booktabs, multirow}
\usepackage{subfigure}
\usepackage{yhmath}
\usepackage{comment}
\usepackage{enumerate}

\DeclareMathOperator{\Tr}{Tr}
\hypersetup{
	colorlinks=true, 
	linkcolor=blue, 
	citecolor=blue, 
	urlcolor=black  
} 
\usepackage[colorinlistoftodos]{todonotes}

\hypersetup{
	pdftitle={},
	pdfsubject={},
	pdfauthor={R. Abgrall, M. Dumbser, P.H. Maire, E. Zampa},
}

\journalname{Journal of Scientific Computing}

\usepackage{xcolor}
\definecolor{darkgreen}{rgb}{0.1,0.6,0.1}

\newcommand{\halb}{\frac{1}{2}}

\newcommand{\A}{\mathbf{A}} 
\newcommand{\B}{\mathbf{B}} 
\newcommand{\E}{\mathbf{E}} 
 
\renewcommand{\H}{\mathbf{H}}
\newcommand{\I}{\mathbf{I}}

\newcommand{\p}{\mathbf{p}} 
\renewcommand{\v}{\mathbf{v}} 
\newcommand{\q}{\mathbf{q}} 
\newcommand{\f}{\mathbf{f}} 
\newcommand{\g}{\mathbf{g}} 
\newcommand{\x}{\mathbf{x}} 
\newcommand{\w}{\mathbf{w}} 
\newcommand{\n}{\mathbf{n}} 
\renewcommand{\u}{\mathbf{u}} 
\newcommand{\dx}{\; \mathtt{d}\mathbf{x}}
\newcommand{\dgamma}{\; \mathtt{d}\gamma}
\newcommand{\dd}{\mathtt{d}}
\newcommand{\dt}{\mathtt{d}t}

\newcommand{\Uh}{\mathcal{U}_h^N}
\newcommand{\Wh}{\mathcal{W}_h^{N+1}}

\newcommand{\uh}{\u_h}
\newcommand{\vh}{\v_h}
\newcommand{\Yh}{Y_h} 
\newcommand{\Bh}{\B_h} 
\newcommand{\gh}{\tilde{\g}_h}
\newcommand{\Zh}{\tilde{Z}_h}
\newcommand{\Ah}{\tilde{\A}_h}
\newcommand{\wh}{\tilde{\w}_h}
\newcommand{\fh}{\tilde{\f}_h}

\newcommand{\Pih}{\mathbf{\Pi}_h}
\newcommand{\PW}{\Pih}
\newcommand{\PU}{\mathbf{\pi}_h}

\newcommand{\dq}
{\boldsymbol{\phi}}
\newcommand{\dqt}{\boldsymbol{\psi}_h}
\newcommand{\norm}[1]{\lVert #1 \rVert}
\newcommand{\seminorm}[1]{\lvert #1 \rvert}

\allowdisplaybreaks

\begin{document}
	
	\title{On structure-preserving and pointwise conservative continuous DG schemes for hyperbolic systems}
	
	\titlerunning{Structure-preserving CG-DG schemes for hyperbolic systems}
	
	\author{R\'emi Abgrall \and 
		Michael Dumbser \and \\  
		Pierre-Henri Maire \and
		Enrico Zampa 
	}
	
	
	\institute{R\'emi Abgrall \at Institut f\"ur Mathematik, Universit\"at Z\"urich, Wintherthurerstrasse 190, CH 8057 Z\"urich, Switzerland\\ \email{remi.abgrall@math.uzh.ch}\\
		Michael Dumbser \at
		Laboratory of Applied Mathematics, University of Trento, Via Mesiano 77, 38123 Trento, Italy.\\
		\email{michael.dumbser@unitn.it} \\
        Pierre-Henri Maire \at CEA CESTA, 33116 Le Barp, France.\\
        \email{pierre-henri.maire@cea.fr} \\ 
        Enrico Zampa \at Faculty of Mathematics, Universit\"at Wien, Oskar-Morgenstern-Platz 1, 1090 Vienna, Austria.\\
        \email{enrico.zampa@univie.ac.at}
	}
	
	\date{Received: date / Accepted: date}
	
	\maketitle


\begin{abstract}
	We present a new class of structure-preserving semi-discrete continuous-discontinuous Galerkin (CG-DG) finite element schemes for linear and nonlinear hyperbolic systems of partial differential equations on unstructured simplex meshes that automatically satisfy the following properties: i) the new schemes are not only cellwise conservative, but also locally pointwise conservative everywhere, hence they satisfy the integral form of the conservation law on \textit{arbitrary control volumes} that do not have to coincide with the mesh at all; ii) the new methods naturally satisfy the two basic vector calculus identities $\nabla \cdot \nabla \times \mathbf{A}$ and $\nabla \times \nabla Z$ exactly pointwise locally and globally everywhere on the discrete level; iii) for linear symmetric hyperbolic systems the schemes are naturally energy conservative for the square energy, i.e. nonlinearly stable in the $L^2$ norm. 
	The key ingredient of the new CG-DG schemes is the use of two different but compatible approximation spaces: the classical DG space $\Uh$ of discontinuous piecewise polynomials of degree up to $N$ and a classical finite element space $\Wh$ of globally continuous piecewise polynomials of degree $N+1$. In the new CG-DG schemes, the discrete solution $\u_h$ is sought in $\Uh$, while a suitable \textit{discrete flux field} $\tilde{\f}_h$ is computed in $\Wh$. For $N=0$ our new schemes are directly related to cell-centered finite volume schemes with suitable vertex-based fluxes.
	All claimed properties of the schemes are first mathematically proven and are then also verified via suitable numerical tests. We show applications of our approach to three hyperbolic systems: i) the equations of linear acoustics, in which the velocity field must remain curl free if it was so at the initial time; ii) the vacuum Maxwell equations in the absence of charges, for which the magnetic and the electric field must remain divergence-free for all times if they were initially divergence-free; 
	iii) the Euler equations of compressible gas dynamics. 
\end{abstract}

\keywords{
	continuous-discontinuous Galerkin finite element schemes; 
linear and nonlinear hyperbolic systems; 
structure-preserving schemes; 
discrete flux field; 
discrete vector calculus identities; 
divergence-free and curl-free schemes;
applications to the equations of linear acoustics, 
the vacuum Maxwell equations 
and to the nonlinear compressible Euler equations
}


\section{Introduction}
\label{sec.intro}

First order hyperbolic conservation laws with stationary differential constraints (involutions) are of fundamental importance in continuum physics. Probably the most well-known involution is the divergence-free property of the magnetic field in the case of the Maxwell and magnetohydrodynamics (MHD) equations. This mathematical property of the magnetic field is equivalent to the physical statement that there exist no magnetic monopoles. Similarly, in computational solid mechanics, the inverse deformation gradient, being a gradient, must remain curl-free for all times. 

The Yee scheme \cite{Yee66} was the first compatible discretization of the time-domain Maxwell equations and was based on a staggered mesh that allowed to define compatible discrete derivative operators so that the discrete magnetic field remains always divergence-free if it was initially divergence-free. 

Since the seminal work of Yee many different numerical schemes have been designed in order to satisfy the two classical vector calculus identities exactly also at the discrete level, namely that the discrete divergence of a discrete curl must be zero and that the discrete curl of a discrete gradient must vanish. Without pretending to be complete, we refer the interested reader to the well-known family of mimetic finite difference schemes  \cite{HymanShashkov1997,Margolin2000,Lipnikov2014}, compatible finite volume schemes \cite{BalsaraCED,BalsaraKaeppeli,HazraBalsara} and compatible finite elements \cite{Nedelec1,Nedelec2,Hiptmair,Arnold2006,Monk,Alonso2015,CPSo2016,ddrham,BBZ22,ZAR,Zampa1}. 

While the divergence-free property of the magnetic field has almost immediately attracted the interest of researchers in numerical analysis, compatible schemes that preserve the curl-free property of a vector field exactly on the discrete level have been developed much later, see e.g. \cite{JeltschTorrilhon2006,BalsaraCurlFree,SIGPR,SIST,Perrier3,CurlFreeTwoFluid}. Very recently, notable progress in the field of exactly constraint-preserving finite volume and discontinuous Galerkin schemes has been also achieved in \cite{Sidilkover2025,Barsukow2024} and \cite{Perrier1,Perrier2,CompatibleDG1}.  
Compatible discrete differential operators are also essential in the construction of staggered and cell-centered Lagrangian schemes, see e.g. \cite{Caramana1998,DepresMazeran2003,Maire2007,ShashkovCellCentered,LMR2016,Maire2020,Boscheri24_mhd} and references therein, as they allow for a natural compatibility with the so-called geometric conservation law (GCL). At first order of accuracy, the method presented in this paper is actually closely related to \cite{Maire2020,Boscheri24_mhd,HTCLagrange,HTCLagrangeGPR}, but on fixed meshes in Eulerian coordinates and not on moving meshes. These methods require the computation of numerical fluxes on the vertices of the mesh, which is linked to the construction of genuinely multidimensional Riemann solvers, see e.g. \cite{BalsaraMultiDRS,MUSIC1,MUSIC2,MultidOsher,Gallice2022,Delmas2025,DelGrosso2026,Abgrall2026} and its generalization to arbitrary co-dimension of the mesh via the composite finite volume approach introduced in \cite{HochComposite1,HochComposite2,HochComposite3}.  

The key ingredient of the new CG-DG schemes introduced in this paper is the use of two different but \textit{compatible} approximation spaces, as in \cite{CompatibleDG1}: first, a DG space $\Uh$ of discontinuous piecewise polynomials of degree up to $N$ and second, a classical finite element space $\Wh$ of globally continuous piecewise polynomials of degree $N+1$. In the new CG-DG schemes, the discrete solution $\u_h$ is sought in $\Uh$, while a suitable \textit{discrete flux field} $\tilde{\f}_h$ is computed in $\Wh$. In the 1D case our methods are closely related to the so-called flux reconstruction (FR) method \cite{FR1,FR2,FR3,FR4,FR5,FR6} but in the unstructured case treated in this paper, the two methods are completely different, see e.g. \cite{FR6} and related spectral differences on unstructured grids \cite{USSD1,USSD2}. Since different approximation polynomials are used for the representation of the state vector and of the flux tensor, the new CG-DG approach presents also
some common ingredients with $P_NP_M$ and reconstructed DG schemes, see e.g. \cite{Dumbser2008,luo1,luo2,luo5}. 
However, we view the new method rather as an extension and generalization of our previous work \cite{CompatibleDG1}.

The new CG-DG schemes introduced in this paper have the following features and properties: 
\begin{enumerate}[(i)]  
\item they rely on a \textit{discontinuous} piecewise polynomial representation of the state vector $\uh \in \Uh$ in the DG space of degree $N$ and on a \textit{globally continuous} representation of the discrete numerical flux field $\fh \in \Wh$ in the continuous finite element space of degree $N+1$; 
\item they are exactly pointwise conservative everywhere and also conservative on arbitrary control volumes that do not need to coincide with the mesh; 
\item they are provably nonlinearly stable in the $L^2$ norm for linear symmetric hyperbolic systems 
\item the discrete differential operators appearing in the method satisfy a discrete Schwarz theorem and therefore our CG-DG schemes satisfy the two classical vector calculus identities $\nabla \times \nabla Z = 0$ and $\nabla \cdot \nabla \times \A = 0$ exactly also on the discrete level, pointwise inside each element and also across element interfaces. 
\end{enumerate}

The rest of this paper is organized as follows: in Section \ref{sec.method} we introduce the numerical method and give more details for the special case $N=0$ in Section \ref{sec:Lagrangian}. In Section \ref{sec.prop} we analyze the scheme and prove its properties. In Section \ref{sec.effi} we show a more efficient way to compute the discrete flux field compared to the $L^2$ projection used in Section \ref{sec.method}. Numerical results are shown in Section \ref{sec.results} and the Conclusions are drawn in Section \ref{sec.concl}.

\section{Numerical method}
\label{sec.method}

\subsection{Governing PDE under consideration} 

We present the numerical method for the general case of a nonlinear system of hyperbolic conservation laws 
\begin{equation}
	\label{eqn.pde}
	\partial_t \q + \nabla \cdot \f = 0,
\end{equation}
with $\q=\q(\x,t)$ the state vector and $\f=\f(\q)$ the flux tensor.
Furthermore, $t$ is the time and $\x$ the vector of spatial coordinates $\x \in \Omega \subset \mathbb{R}^d$, with $\Omega$ the computational domain and $d$ the number of space dimensions. 
Throughout this paper we assume the system \eqref{eqn.pde} to be hyperbolic. 
We will also consider the special case of linear symmetric-hyperbolic systems, in which $\f(\q)$ is a linear function of $\q$ and the corresponding matrices appearing in the flux are symmetric. 
The computational domain $\Omega$ is discretized at the aid of a set of conforming and non-overlapping simplex elements $T_k$ and the triangulation / tetrahedrization of $\Omega$ is denoted by $\mathcal{T}_h = \bigcup T_k$.  
In this paper we furthermore denote by the set $\mathcal{E}_h$ the skeleton of the mesh, i.e. the union of all edges (2D) or faces (3D) of all elements $T_k$.  

\subsection{Compatible approximation spaces} 
  
The essence of the CG-DG scheme is the use of two different approximation spaces. A classical discontinuous Galerkin finite element space $\Uh$ of discontinuous piecewise polynomials of degree $N$ and a classical continuous finite element space $\Wh$ of globally continuous piecewise polynomials of degree $N+1$. It is obvious that spatial derivatives of solutions in the continuous FEM space $\Wh$ can be \textit{exactly} represented in the DG space $\Uh$. This is indeed a fundamental key property of the particular choice of $\Uh$ and $\Wh$. 

We now make the following ansatz for the discrete solutions of \eqref{eqn.pde}:
The vector $\q=\q(\x,t)$ will be approximated in $\Uh$ by a DG scheme
\begin{equation}
	\u_h(x,t) = \sum \limits_j \phi(\x)_j \hat \u_j(t) := 
	\phi_j(\x) \hat \u_j(t) \quad \in \Uh,
	\label{eqn.uh} 
\end{equation} 
with spatial basis functions $\phi_j(\x) \in \Uh$ and time-dependent degrees of freedom $\hat \u_j(t)$. Since $\uh$ is \textit{discontinuous}, the particular choice of the DG basis functions is not crucial. Here we simply use classical nodal Lagrange basis functions of degree $N$ within each element. Throughout this paper we will make use of the Einstein summation convention which implies summation over two repeated indices. The $\u_h$ are allowed to \textit{jump} from one element $T_k$ to another. 
We then introduce the following auxiliary discrete solution represented in the continuous FEM space: 
\begin{equation}
	\wh(x,t) = \sum \limits_j \psi(\x)_j \hat \w_j(t) := 
	\psi_j(\x) \hat \w_j(t) \quad \in \Wh,
	\label{eqn.wh} 
\end{equation} 
with spatial basis functions $\psi_j(\x) \in \Wh$ and time-dependent degrees of freedom $\hat \w_j(t)$. Since $\w_h$ is \textit{globally continuous} the natural choice for the basis functions is the classical Lagrange basis of continuous finite elements of degree $M=N+1$. 
The auxiliary solution $\wh$ can be readily obtained from the DG solution $\uh$ via a suitable projector $\Pih$ from $\Uh$ in $\Wh$ as
\begin{equation}
	\wh = \Pih \uh. 
\end{equation}
One may choose, for example, $L^2$ projection as a natural projector $\Pih$, i.e. by requiring 
\begin{equation}
	\int \limits_{\Omega} \psi_i(\x) \wh(\x,t) d\x = 
	\int \limits_{\Omega} \psi_i(\x) \uh(\x,t) d\x, \qquad \forall \psi_i \in \Wh.  
	\label{eqn.L2proj}
\end{equation}
Inserting \eqref{eqn.uh} and \eqref{eqn.wh} into \eqref{eqn.L2proj} one obtains in index notation 
\begin{equation}
	\int \limits_{\Omega} \psi_i \psi_j d\x \, \hat{\w}_j(t) = 
	\int \limits_{\Omega} \psi_i \phi_j d\x \, \hat{\u}_j(t),  
	\label{eqn.L2proj.dof}
\end{equation}
which allows to compute the degrees of freedom of the auxiliary FEM solution $\hat{\w}_j$ from the degrees of freedom $\hat{\u}_j$ of the DG solution. Note that this operation requires the inversion of the global mass matrix of the continuous finite elements. Since the global mass matrix is symmetric and positive definite, one can use a classical matrix-free conjugate gradient method to solve the problem iteratively and efficiently, see \cite{cgmethod}. To improve the efficiency of this procedure, see Section \ref{sec.effi} below. 

\subsection{Discrete flux field} 

We now also introduce a discrete representation of the flux tensor, denoted throughout this paper as the \textit{discrete flux field} and which is defined as follows: 
\begin{equation}
	\fh(\x,t) = \sum \limits_j \psi(\x)_j \hat \f_j(t) := 
	\psi_j(\x) \hat \f_j(t) \quad \in \Wh.
	\label{eqn.fh} 
\end{equation} 
Since the continuous finite element basis is a \textit{nodal} Lagrange basis, one simply carries out a \textit{pointwise flux evaluation} in the finite element nodes as
\begin{equation}
	\hat \f_j(t) = \f(\hat \u_j(t)).
	\label{eqn.nodalflux}
\end{equation} 
Eqns. \eqref{eqn.L2proj.dof} and \eqref{eqn.nodalflux} allow to compute the degrees of freedom of the discrete flux field $\fh$ in terms of the degrees of freedom of the DG solution $\uh$. In alternative, the degrees of freedom of the discrete flux field could also have been obtained via  $L^2$ projection as follows: 
\begin{equation}
	\int \limits_{\Omega} \psi_i(\x) \fh(\x,t) d\x = 
	\int \limits_{\Omega} \psi_i(\x) \f(\wh(\x,t)) d\x.  
	\label{eqn.L2proj.fh}
\end{equation}
However, this is a line of research that was not further followed and throughout this paper we simply use the nodal evaluation \eqref{eqn.nodalflux}, i.e. \textit{interpolation} and \textit{not} $L^2$ projection of the discrete flux field.  

\subsection{New CG-DG scheme}

We now have all ingredients to introduce the new CG-DG scheme. Multiplication of \eqref{eqn.pde} with a test function $\phi_i \in \Uh$, integration over an element $T_k \in \mathcal{T}_h$ and replacing the state vector $\q$ by the discrete solution $\uh$ and the flux tensor $\f$ by the discrete flux field yields 
\begin{equation}
	\int \limits_{T_k} \phi_i \, \partial_t \uh \, d\x + \int \limits_{T_k} \phi_i \, \nabla \cdot \fh \, d\x = 0, \qquad 
	\forall \phi_i \in \Uh.
	\label{eqn.cgdg} 
\end{equation}
After inserting the ansatz \eqref{eqn.uh} and \eqref{eqn.fh} into \eqref{eqn.cgdg} one obtains in terms of the degrees of freedom 
\begin{equation}
	\int \limits_{T_k} \phi_i \phi_j \, d\x \, \frac{d\hat{\u}_j}{dt} + \int \limits_{T_k} \phi_i \, \nabla \psi_j \, d\x \, \cdot \hat{\f}_j = 0, \qquad 
	\forall \phi_i \in \Uh.
	\label{eqn.cgdg.dof} 
\end{equation}
Since the discrete flux field is \textit{globally continuous}, no integration by parts and no further steps are needed. The new semi-discrete CG-DG scheme is completely defined by \eqref{eqn.cgdg}, making obviously use of \eqref{eqn.L2proj.dof}, \eqref{eqn.fh} and \eqref{eqn.nodalflux}. As usual in RKDG schemes, time integration is performed at the aid of classical or TVD Runge-Kutta schemes of suitable order. 

\subsection{Compatible pair of discrete nabla operators}

Based on the ideas first outlined in \cite{CompatibleDG1} and making use of the two compatible approximation spaces $\Uh$ and $\Wh$ a pair of compatible discrete nabla operators is introduced. The subscripts $a$ and $c$ will be used for the DG degrees of freedom in a cell (triangle / tetrahedron $T_k$) and the subscripts $p$ and $q$ for the nodal degrees of freedom of the continuous FEM. Furthermore, tensor indices of physical fields are denoted by $i$, $j$, $k$, $l$ and $m$, respectively. 

\subsubsection{Discrete primary nabla operator} 

We define the rank three stiffness tensor as 
$$ \mathbb K = \left\{ \mathbf{K}_{cp} \right\} = \int \limits_{\Omega} \phi_c \nabla \psi_p d\mathbf{x}, $$  
or, equivalently, in Einstein index notation, 
$$
K_{cpm} = \int \limits_{\Omega} \phi_c \partial_m \psi_p d\mathbf{x}.
$$
Note that this discrete operator directly appears in the CG-DG scheme \eqref{eqn.cgdg.dof} above.
To get the discrete primary nabla operator we multiply with the inverse of the element-local mass matrix of the DG scheme, which is easy to compute, in particular for orthogonal basis functions, see e.g. \cite{Dubiner},      
$$ D_{ac} = \int \limits_{\Omega} \phi_a \phi_c d\mathbf{x}  $$
i.e. 
\begin{equation} 
\nabla_c^p = D_{ca}^{-1} \mathbf{K}_{ap}, 
\qquad \textnormal{or, } \qquad  
\left(\partial_m\right)_c^p = D_{ca}^{-1} K_{apm}.
\label{eqn.primary.nabla}
\end{equation} 
Let now $Z$ be a scalar potential and its gradient is denoted by $\v = \nabla Z$. The approximation of $Z$ in $\mathcal{W}^{N+1}_h$ is then denoted by $\Zh$, with degrees of freedom ${Z}_q$ and representation 
\begin{equation} 	
\Zh = \sum_q \psi_q(\mathbf{x}) {Z}_q:= \psi_q Z_q \qquad \in \mathcal{W}^{N+1}_h,
\label{eqn.Z.ansatz}
\end{equation} 	
while the discrete gradient is denoted by $\vh \in \mathcal{U}^N_h$ and representation
\begin{equation} 	
\vh = \sum_a \phi_a(\mathbf{x}) \v_a := \phi_a \v_a = \phi_a v_{ma} \qquad \in \mathcal{U}^N_h,
\label{eqn.v.ansatz}
\end{equation} 	
where the index $m$ in $v_{ma}$ refers to the components of the vector field $\v$ and the index $a$ to the degrees of freedom. 
The degrees of freedom of the discrete gradient then simply read 
\begin{equation}
\v_c = \nabla_c^p Z_p = \sum_{p} D_{ca}^{-1} \mathbf{K}_{ap} Z_p,  
\qquad 
\textnormal{ or, } 
\qquad 
v_{mc} = \left( \partial_m \right)_c^p Z_p = D_{ca}^{-1} K_{apm} Z_p.    
\end{equation}
Because of the particular choice of compatible discrete function spaces $\Uh$ and $\Wh$ for $\vh$ and $\Zh$, respectively, this primary discrete nabla operator is \textit{exact}, i.e. $\vh = \nabla_h \Zh = \phi_c \nabla_c^p Z_p = \nabla \Zh$, with $\nabla$ the exact nabla operator.  

The discrete primary nabla operator introduced above can also be applied to general discrete vector fields $\Ah = \psi_q \mathbf{A}_{q} = \psi_q A_{kq} \in \mathcal{W}^{N+1}_h$, with $k$ the tensor index of the physical field and $q$ the index for the degrees of freedom:
\begin{equation}
 \nabla_c^p \mathbf{A}_{p} = \sum_{p} D_{ca}^{-1} \mathbf{K}_{ap} \mathbf{A}_{p}, 
 \qquad \textnormal{ or, } \qquad 
 \left( \partial_m \right)_c^p A_{kp} = D_{ca}^{-1} K_{apm} A_{kp}.  
\end{equation}
For $N=0$ we have $\phi_c=1$, the inverse mass matrix is one divided by the cell volume and $\nabla \psi_p$ is the gradient of continuous P1 Lagrange FEM
and we obtain the discrete primary nabla operator already used in the Lagrangian schemes \cite{Despres2005,Despres2009,Maire2007,Maire2009,LMR2016,Maire2020,Boscheri_hyperelast_22,Boscheri24_mhd,HTCLagrange,HTCLagrangeGPR} as a special case of our method. 

The discrete primary nabla operator allows us now to rewrite the CG-DG scheme \eqref{eqn.cgdg.dof} in even more compact form. Multiplying \eqref{eqn.cgdg.dof} with the inverse of the element mass matrix yields 
\begin{equation}
 \frac{d\hat{\u}_c}{dt} + \nabla_c^p \cdot \hat{\f}_p = 0,
 \label{eqn.cgdg.nabla}
\end{equation} 
which for nodal basis functions $\phi_c$ and $\psi_p$ could now even be interpreted as a kind of high order unstructured finite difference scheme. However, this is not the objective of this paper, but it can be related to the flux-reconstruction (FR) and spectral difference methods introduced in \cite{FR6,USSD1,USSD2}. 

\subsubsection{Discrete dual nabla operator} 

The discrete dual nabla operator \cite{CompatibleDG1} can be obtained via integration by parts 
\begin{eqnarray}
 \int \limits_\Omega \psi_p \nabla \phi_c \, d\x & = & 
 \int \limits_{\Omega \backslash \mathcal{E}_h} \psi_p \nabla \phi_c \, d\x + \int \limits_{\mathcal{E}_h} \psi_p \left( \phi_c^+ - \phi_c^- \right) \, \n \, dS 
 \nonumber \\
 & = & 
 - \int \limits_\Omega \nabla \psi_p  \, \phi_c \, d\x + \int \limits_{\partial \Omega}  \psi_p \phi_c \, \n \, dS.
\end{eqnarray}
Recall that $\mathcal{E}_h$ denotes the skeleton of the mesh, i.e. the union of all edges (2D) or faces (3D) of all elements $T_k$.  
For suitable boundary conditions on $\partial \Omega$ one can drop the boundary contributions on $\partial \Omega$. Hence, the above expression simplifies to 
\begin{equation}
 \int \limits_\Omega \psi_p \nabla \phi_c d\x = 
 \int \limits_{\Omega \backslash \mathcal{E}_h} \psi_p \nabla \phi_c \, d\x + \int \limits_{\mathcal{E}_h} \psi_p \left( \phi_c^+ - \phi_c^- \right) \, \n \, d\x 
 = 
 - \int \limits_\Omega \nabla \psi_p  \, \phi_c \, d\x
\end{equation}
which allows us to define a dual nabla operator from the DG space $\Uh$ into the FEM space $\Wh$ as follows:   
\begin{equation} 
\tilde{\nabla}_p^c = -\mathbf{K}_{cp}, 
\qquad \textnormal{or, in index notation,} \qquad  
(\tilde{\partial}_m)_p^c = - K_{cpm}.
\label{eqn.dual.nabla}
\end{equation} 
Note that this operator does not yet contain the multiplication by the inverse of the global finite element mass matrix 
\begin{equation}
    M_{pq} = \int \limits_{\Omega} \psi_p \psi_q d\x.     
\end{equation}
In the case $N=0$ we have $\phi_c=1$, hence we recover the discrete dual nabla operator as in \cite{Despres2005,Despres2009,Maire2007,Maire2009,LMR2016,Maire2020,Boscheri_hyperelast_22,Boscheri24_mhd,HTCLagrange,HTCLagrangeGPR}, which, however, all employ mass lumping and therefore do not require the inversion of a global mass matrix.      
Let $Y$ be a scalar potential its gradient $\g = \nabla Y$. The approximation of $Y$ in $\mathcal{U}^N_h$ is denoted by $\Yh$, with degrees of freedom ${Y}_a$ and representation 
\begin{equation} 	
\Yh = \sum_a \phi_a(\mathbf{x}) {Y}_a:= \phi_a Y_a \qquad \in \mathcal{U}^N_h,
\label{eqn.Y.ansatz}
\end{equation} 	
while the discrete gradient is denoted by $\gh \in \mathcal{W}^{N+1}_h$ and with representation
\begin{equation} 	
\gh = \sum_p \psi_p(\mathbf{x}) \g_p := \psi_p \g_p = \psi_p g_{mp} \qquad \in \mathcal{W}^{N+1}_h,
\label{eqn.w.ansatz}
\end{equation} 	
where the index $m$ in $g_{mp}$ refers to the spatial components of the vector field and the index $p$ to the associated degrees of freedom of the discrete solution $\gh$. The discrete dual gradient $\tilde{\nabla}_h$ applied to $\Yh$ reads 
\begin{equation}
M_{pq} \g_q = \tilde{\nabla}_p^c Y_c = -\sum_{c} \mathbf{K}_{cp} Y_c,  
\qquad 
\textnormal{ or, }  
\qquad 
M_{pq} g_{mq} = ( \tilde{\partial}_m )_p^c = - K_{cpm} Y_c.    
\end{equation}
We again provide also the discrete dual nabla operator applied to a vector field $\mathbf{A}_h = \phi_a A_{ka} \in \mathcal{U}^N_h$, with $k$ the tensor index of the physical field and $a$ the index for the degrees of freedom:
\begin{equation}
 \tilde{\nabla}_p^c \mathbf{A}_{c} = -\sum_{c} \mathbf{K}_{cp} \mathbf{A}_{c}, 
 \qquad \textnormal{ or, } \qquad 
 ( \tilde{\partial}_m )_p^c A_{kc} =  - K_{cpm} A_{kc}.  
\end{equation}

\subsection{Shock capturing via suitable artificial viscosity}

In order to capture shocks and strong gradients, we employ a simple concept of artificial viscosity, which, however, is full compatible with the principles of the CG-DG scheme outlined above, in particular with the concept of a discrete flux field. 

For general hyperbolic systems without involutions, the degrees of freedom of the discrete flux field are simply modified as 
\begin{equation}
	\hat{\f}_j = \f(\hat{\u}_j) - \epsilon_j \widehat{\nabla \u}_j.  
	\label{eqn.shock.capture} 
\end{equation} 
Here, $\epsilon_j$ is the artificial viscosity coefficient and the 
$\widehat{\nabla \u}_j$ are the degrees of freedom of the discrete dual nabla operator applied to the DG solution $\uh$, i.e. we have 
\begin{equation}
	\int \limits_\Omega \psi_i \tilde{\nabla} \uh \, d\x =  
	 \int \limits_\Omega \psi_i \psi_j \, d\x \, \widehat{\nabla \u}_j =  
	-\int \limits_\Omega \nabla \psi_i  \otimes \uh \, d\x.  
\end{equation}
In this paper we simply use 
\begin{equation}
	\epsilon_j = \halb \chi_j \frac{h_j}{2N + 1} s^{\max}_j. 
\end{equation}
with $ s^{\max}_j$ a suitable upper bound of the maximum absolute value of the eigenvalues of the hyperbolic system in all normal directions, $h_j$ a characteristic mesh spacing and $\chi_j \in [0,1]$ an indicator function which takes the values $\chi_j = 0$ in unlimited areas and $\chi_j = 1$ in troubled zones. For state variables in $\q$ subject to involutions, the artificial viscosity specified above is not compatible and needs to be properly modified. For more details on compatible artificial viscosity, see the Section \ref{sec.results} below.   

\subsection{Comparison with classical DG schemes}

In the case of a standard RK-DG scheme \cite{cbs0,cbs3,cbs4} one makes the same ansatz for $\uh$ as above in \eqref{eqn.uh}, and the weak form of the PDE reads  
\begin{equation}
	\int \limits_{T_k} \phi_i \, \partial_t \uh \, d\x + \int \limits_{T_k} \phi_i \, \nabla \cdot \f\left( \uh \right) \, d\x = 0, \qquad 
	\forall \phi_i \in \Uh.
	\label{eqn.dg0} 
\end{equation}	 
But since $\uh$ and therefore also $\f(\uh)$ is discontinuous at element boundaries, one uses a numerical flux $\hat{\f}(\uh^-,\uh^+)$, with boundary-extrapolated states $\uh^-$ and $\uh^+$ from within the cell and within the neighbor cell, to obtain a unique flux on the element boundary. The standard DG scheme then reads 
\begin{equation}
	\int \limits_{T_k} \phi_i \, \partial_t \uh \, d\x 	
	- \int \limits_{\mathring{T}_k} \nabla \phi_i \cdot \f\left( \uh \right) \, d\x 
	+ \int \limits_{\partial T_k} \phi_i \, \hat{\f}(\uh^-,\uh^+) \cdot \n \, dS 
	= 0, \quad 
	\forall \phi_i \in \Uh,
	\label{eqn.dgw} 
\end{equation}	
with $T_k^\circ = T_k \backslash \partial T_k$ the interior of the cell, or, equivalently in \textit{fluctuation form}, after undoing integration by parts of the second term 
\begin{equation}
	\int \limits_{T_k} \phi_i \, \partial_t \uh \, d\x 	
	+ \int \limits_{\mathring{T}_k} \phi_i \nabla \cdot \f\left( \uh \right) \, d\x 
	+ \int \limits_{\partial T_k} \phi_i \, \left( \hat{\f}(\uh^-,\uh^+) -  \f\left( \uh^- \right) \right) \cdot \n \, dS
	= 0, \quad 
	\forall \phi_i \in \Uh,
	\label{eqn.dgs} 
\end{equation}	
which is completely different from our new CG-DG scheme \eqref{eqn.cgdg} based on a globally continuous flux-field. In particular note that the discrete flux field in \eqref{eqn.dgw} is in general \textit{not} continuous, since it is $\f(\uh)$ inside the cell and $\hat{\f}(\uh^-,\uh^+)$ on the boundary. A 1D sketch showing this important difference between the standard DG scheme and the new CG-DG method is shown in Figure \ref{fig.comparison} below.  

\begin{figure}[!h]
	\centering
	\includegraphics[trim=50 20 50 20,clip,width=0.99\textwidth]{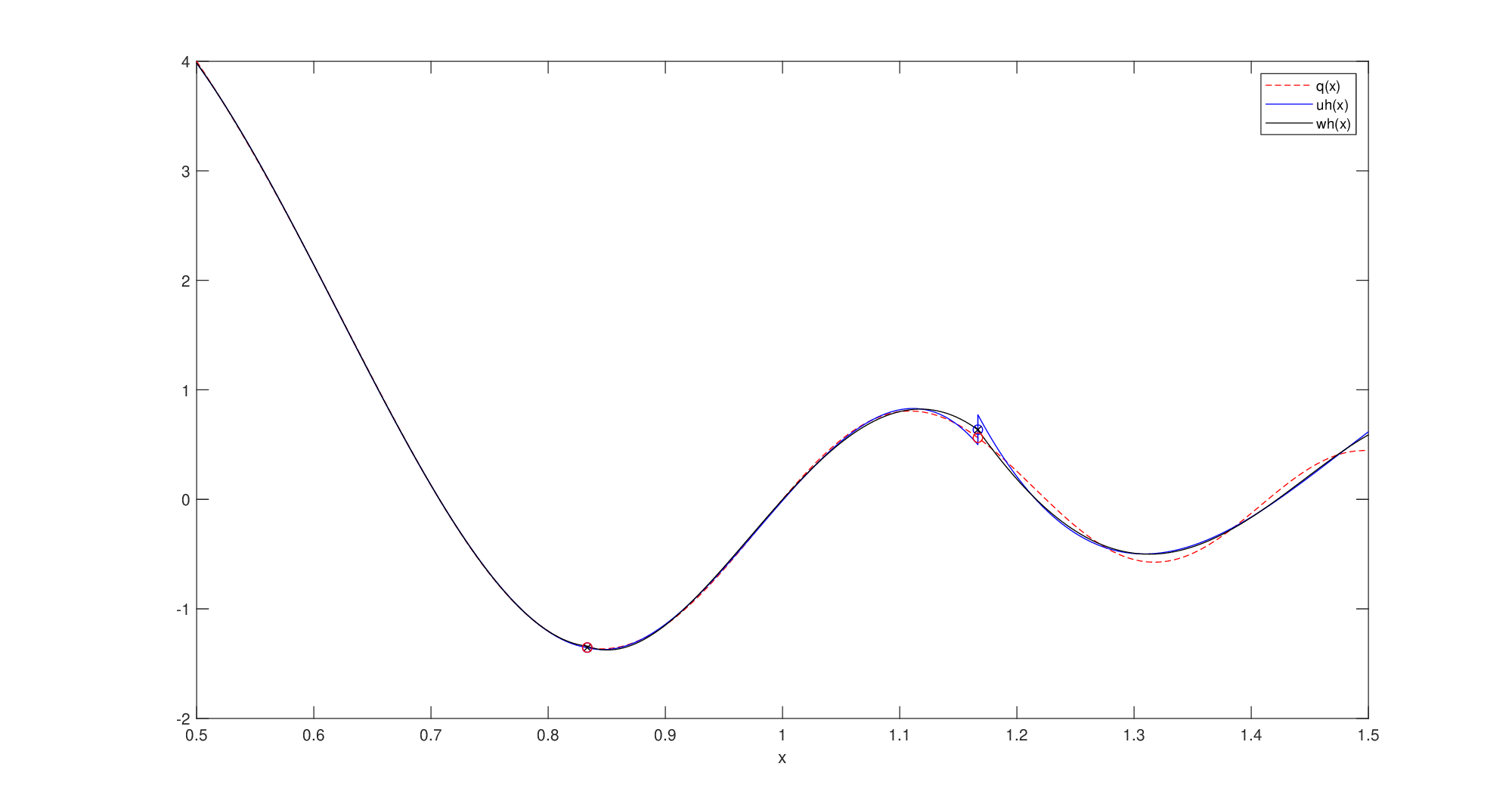}   
    \begin{tabular}{lr}        
\includegraphics[width=0.47\textwidth]{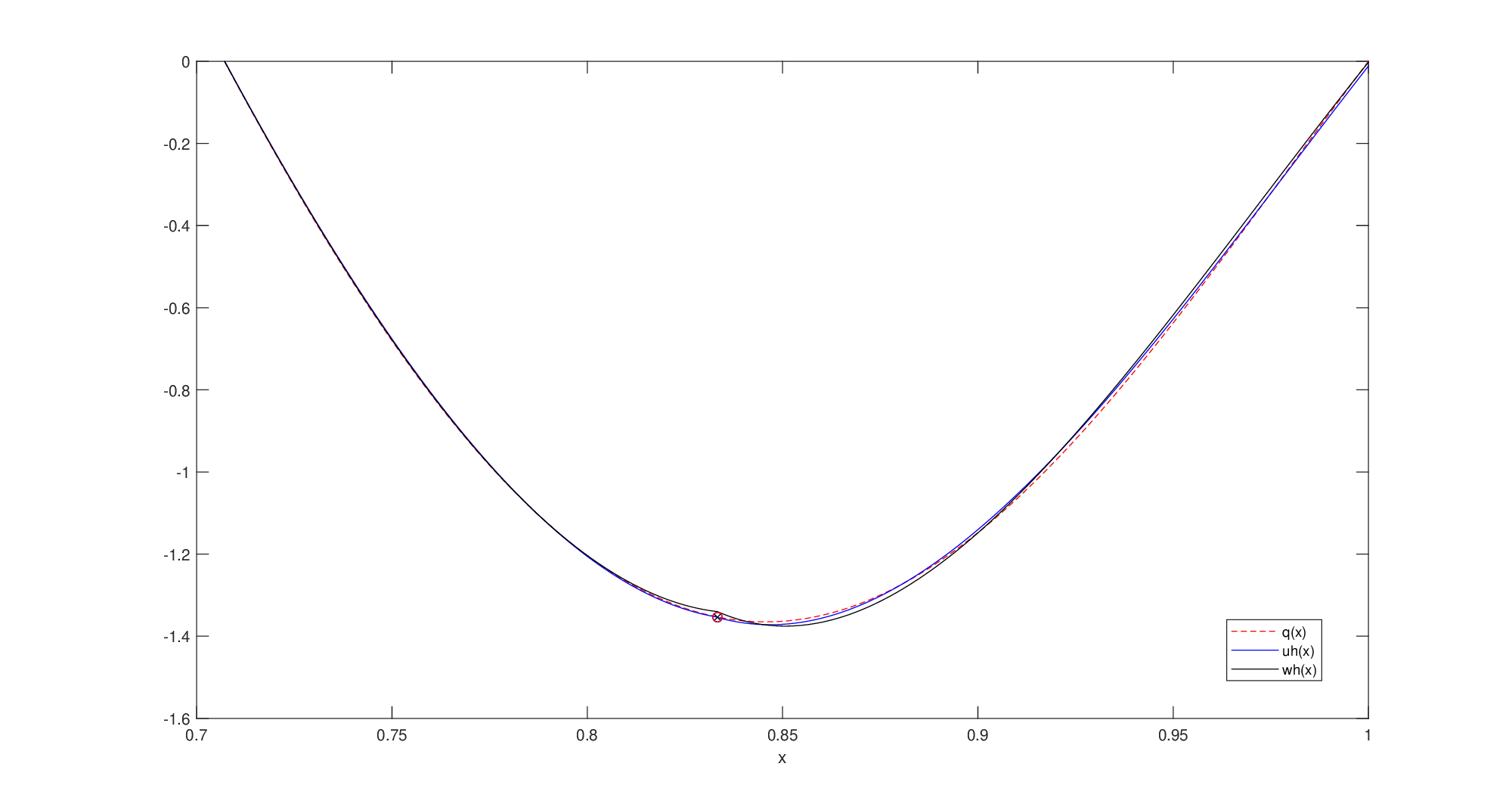}
    & 
    \includegraphics[clip,width=0.47\textwidth]{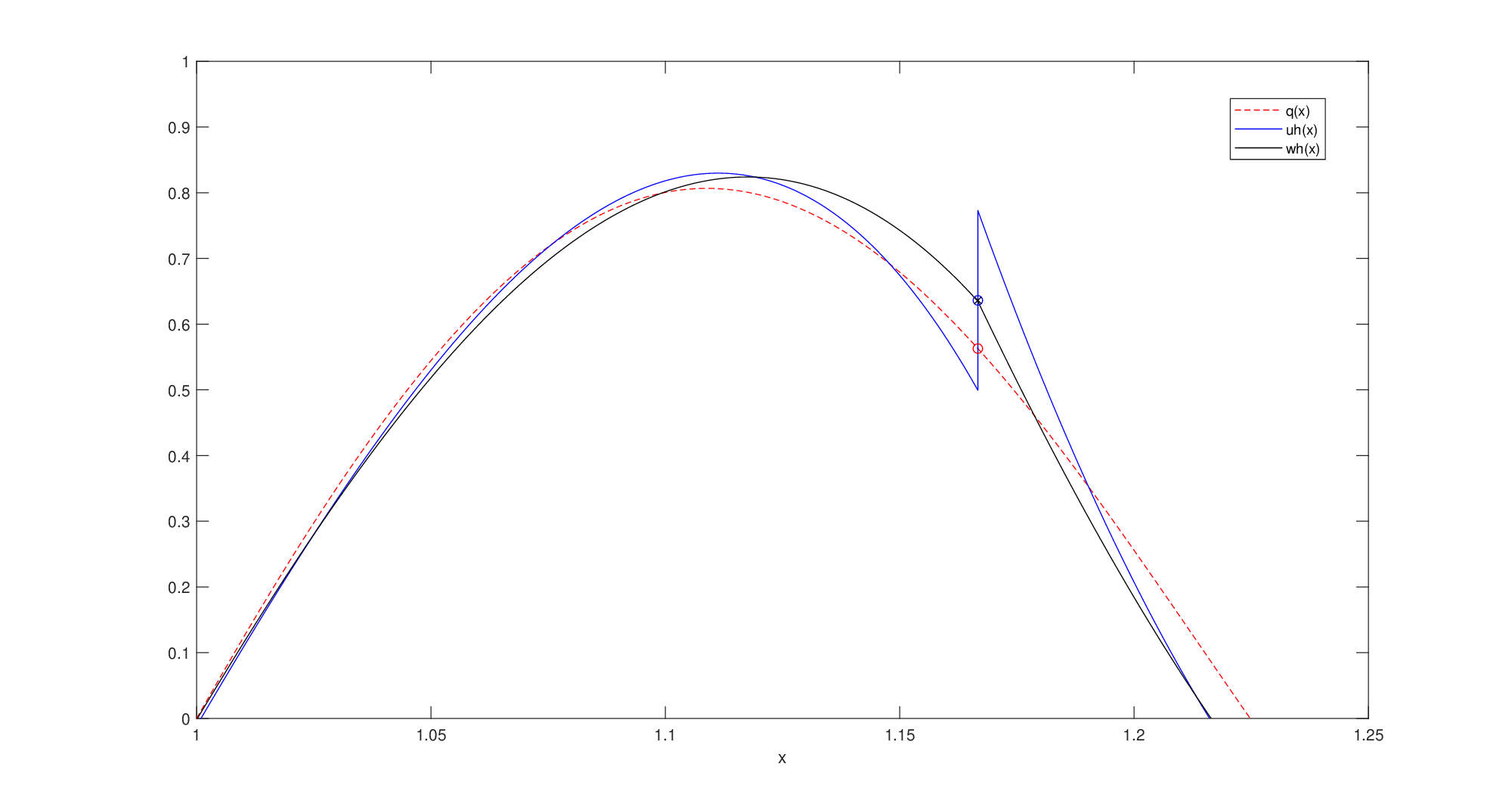}        
    \end{tabular}
	\caption{Original function $q(x) = x^{-2} \sin(2\pi x^2)$, its DG approximation $u_h(x) \in \Uh$ with $N=3$ and 3 elements on the interval $\Omega = [0.5, 1.5]$ and the continuous FEM approximation $\tilde{w}_h(x) \in \Wh$ obtained via $L^2$ projection (top)
    with two zoomed views (bottom). }
	\label{fig.comparison}
\end{figure}



\section{First-order Finite Volume discretization}
\label{sec:Lagrangian}
In this section, we explain how the method described in the previous section \ref{sec.method} "degenerates" when $N=0$, i.e. when piecewise constant polynomials are used in the cells and piecewise linear approximation for the flux. We hope this will help the reader to connect to more classical settings, and also to see more clearly when and where we depart from classical settings.
\subsection{Governing equations}
\label{ssec:Governing}
The computational domain $\Omega$ is a subset of the Euclidean space $\mathbb{R}^{\text{d}}$, where $\text{d}$ denotes the space dimension, which is paved with a collection of non overlapping simplex elements. Throughout this paper, $\mathbf{x} \in \mathbb{R}^{\text{d}}$ denotes the vector position of a point and $t>0$ is the time. The system of conservation laws under consideration reads
\begin{equation}
\label{eq:syscl}
\partial_{t} \mathbf{q}+\nabla \cdot \mathbf{f}=\mathbf{0},
\end{equation}
where $\mathbf{q}=\mathbf{q}(\mathbf{x},t)$ is the vector of conservative variables and $\mathbf{f}=\mathbf{f}(\mathbf{q})$ the flux tensor. 

It is well known that systems of conservation laws of hyperbolic type might admit multiple discontinuous solutions, refer to \cite{Raviart1996}. To select the physically admissible solutions, we supplement it with an entropy inequality. To this end, we introduce the entropy, entropy flux pair $(\mathcal{E}, \mathbf{F})$ defined by
\begin{align*}
  &\mathcal{E}: \,\mathbf{q}\, \longmapsto \,\mathcal{E}(\mathbf{q})\;\text{is strictly concave},\\
  &\mathbf{F}: \,\mathbf{q}\, \longmapsto \, \mathbf{F}(\mathbf{q})\; \text{is such that},\; \nabla \cdot \mathbf{F}=\partial_{\mathbf{q}} \mathcal{E} \cdot \nabla \cdot \mathbf{f}.
\end{align*}
Note that here, we have chosen to work with a concave entropy so that in the case of gas dynamics we recover the physical entropy. Bearing this in mind, the smooth solutions of the system of conservation laws \eqref{eq:syscl} satisfy the supplementary conservation law
\begin{equation}
  \label{eq:entropcons}
  \partial_{t} \mathcal{E} +\nabla \cdot \mathbf{F}=0,
\end{equation}
whereas the admissible discontinuous, {\it i.e.}, weak, solutions have to fulfill the entropy inequality
\begin{equation}
  \label{eq:entropineq}
  \partial_{t} \mathcal{E} +\nabla \cdot \mathbf{F} > 0.
\end{equation}
So equipped with a concave entropy, the foregoing  system of conservations is assumed to be hyperbolic, that is the Jacobian matrix of the flux admits real eigenvalues and a set of linearly independent eigenvectors for all normal directions. To study the theoretical property of the subsequent finite volume discretization, it is worth defining the entropic variable, $\mathbf{p}$, which is nothing but the gradient of the entropy with respect to the vector of conservative variables
\begin{equation}
  \mathbf{p}=\partial_{\mathbf{q}} \mathcal{E}.
\end{equation}
Following Tadmor \cite{Tadmor2003} we introduce the mapping
$$ \mathbf{p}\,\longmapsto \,\mathbf{q}(\mathbf{p}),$$
which is one-to-one by virtue of the concavity of entropy, {\it i.e.}, $\partial_{\mathbf{p}} \mathbf{q}=(\partial_{\mathbf{q}\mathbf{q}} \mathcal{E})^{-1} <0$. Namely, $\mathbf{H}=\partial_{\mathbf{p}} \mathbf{q}$ is nothing but the inverse of the Hessian matrix of the entropy and is thus symmetric negative definite. Finlly, we introduce the potential functions
\begin{align*}
  & \phi(\mathbf{p})=\langle \mathbf{p},\mathbf{q}(\mathbf{p}) \rangle -\mathcal{E}(\mathbf{q}(\mathbf{p})),\\
  & \boldsymbol{\psi}(\mathbf{p})= \mathbf{f}^{\top}(\mathbf{q}(\mathbf{p})) \mathbf{p} -\mathbf{F}(\mathbf{q}(\mathbf{p})),
\end{align*}
where $\langle .,. \rangle$ denotes the usual Euclidean inner product. 
\subsection{Finite Volume discretization}
\label{ssec:FVdiscretization}
We describe the cell-centered Finite Volume discretization of the systems of conservation laws \eqref{eq:syscl}. To ease the notation we describe the two-dimensional discretization knowing that the three-dimensional generalization does not present any difficulties.  
\subsubsection{Grid notation}
\label{sssec:Notation}
The two-dimensional Euclidean space is equipped with the direct orthonormal basis $(\mathbf{e}_x,\mathbf{e}_y)$ which is supplemented by $\mathbf{e}_z=\mathbf{e}_x \times \mathbf{e}_y$. The computational domain, $\Omega$, is paved by means of a collection of nonoverlapping conformal triangles denoted $\omega_c$ and thus $\Omega=\cup_c \omega_c$. The trianguler cell $\omega_c(t)$ is characterized by the set of its vertices $\mathcal{P}(c)$. The generic vertex also named point is denoted using the label $p$ and its vector position is $\mathbf{x}_p$. In the counterclockwise ordered list of points of cell $\omega_c$, the vertex $p^{+}$ is the next vertex with respect to $p$ and $p^{-}$ is the previous one. The midpoints of $[p^{-},p]$ and $[p,p^{+}]$ are respectively $p^{-\frac{1}{2}}$ and $p^{+\frac{1}{2}}$. The quadrangle obtained joining successively the cell centroid $\mathbf{x}_c$ to $\mathbf{x}_{p^{-\frac{1}{2}}}$, $\mathbf{x}_p$, $\mathbf{x}_{p^{+\frac{1}{2}}}$ and the centroid defines the subcell $\omega_{pc}$, refer to Fig.~\ref{fig:trigrid}-(a). The set of subcells of a given cell constitutes a partition of this cell, that is $\omega_c=\cup_{p \in \mathcal{P}(c)} \omega_{pc}$. Gathering the subcells surrounding the generic vertex $p$ we define the dual cell, refer to Fig.~\ref{fig:trigrid}-(b)
\begin{equation}
  \label{eq:dualcell}
  \omega_p=\bigcup_{c \in \mathcal{C}(p)} \omega_{pc},
\end{equation}
where $\mathcal{C}(p)$ is the set of cells sharing vertex $p$.

\begin{figure}[htbp]
  \begin{center}
  \subfigure[]
    {\includegraphics[width=0.48\textwidth]{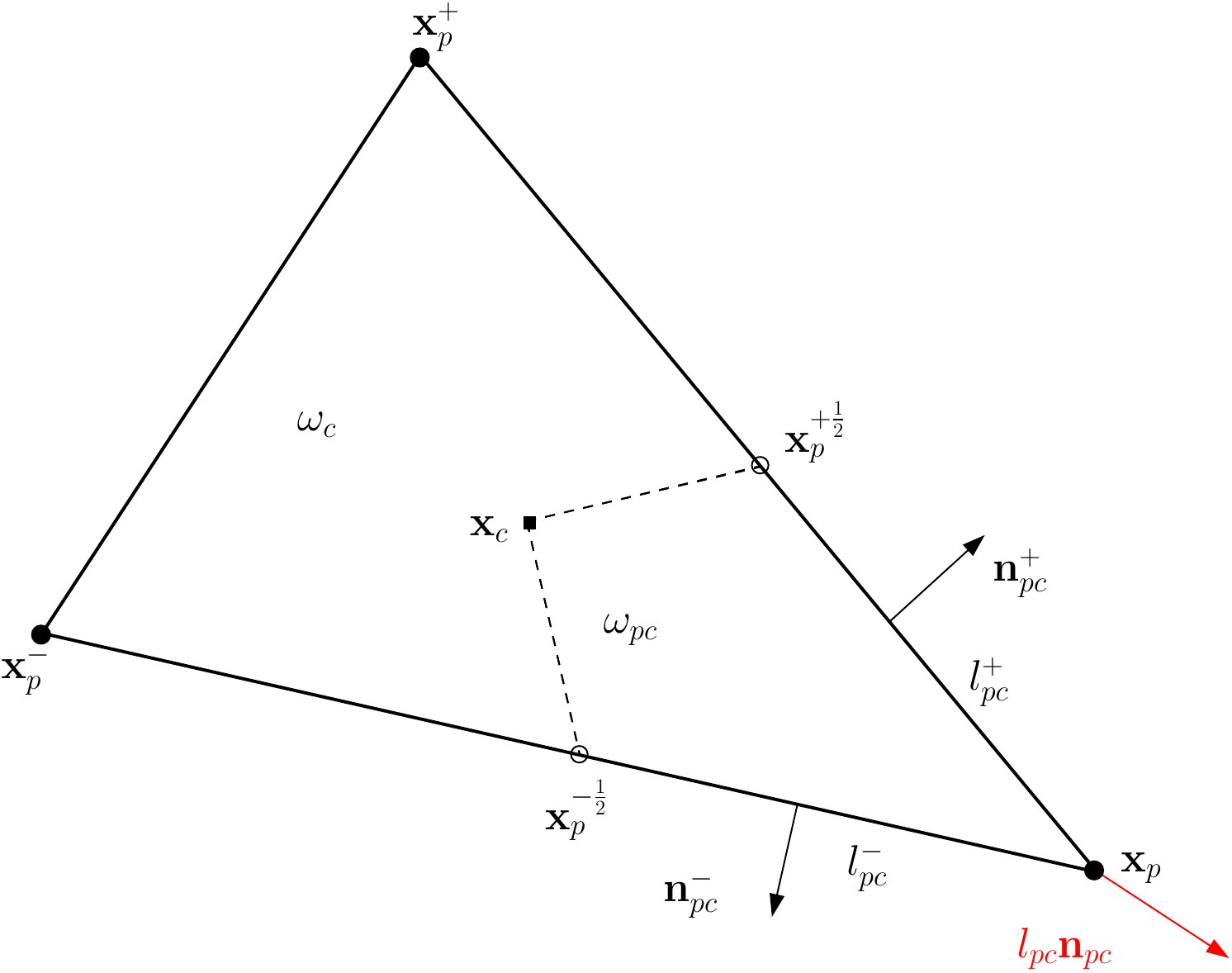}}
\subfigure[]{\includegraphics[width=0.48\textwidth]{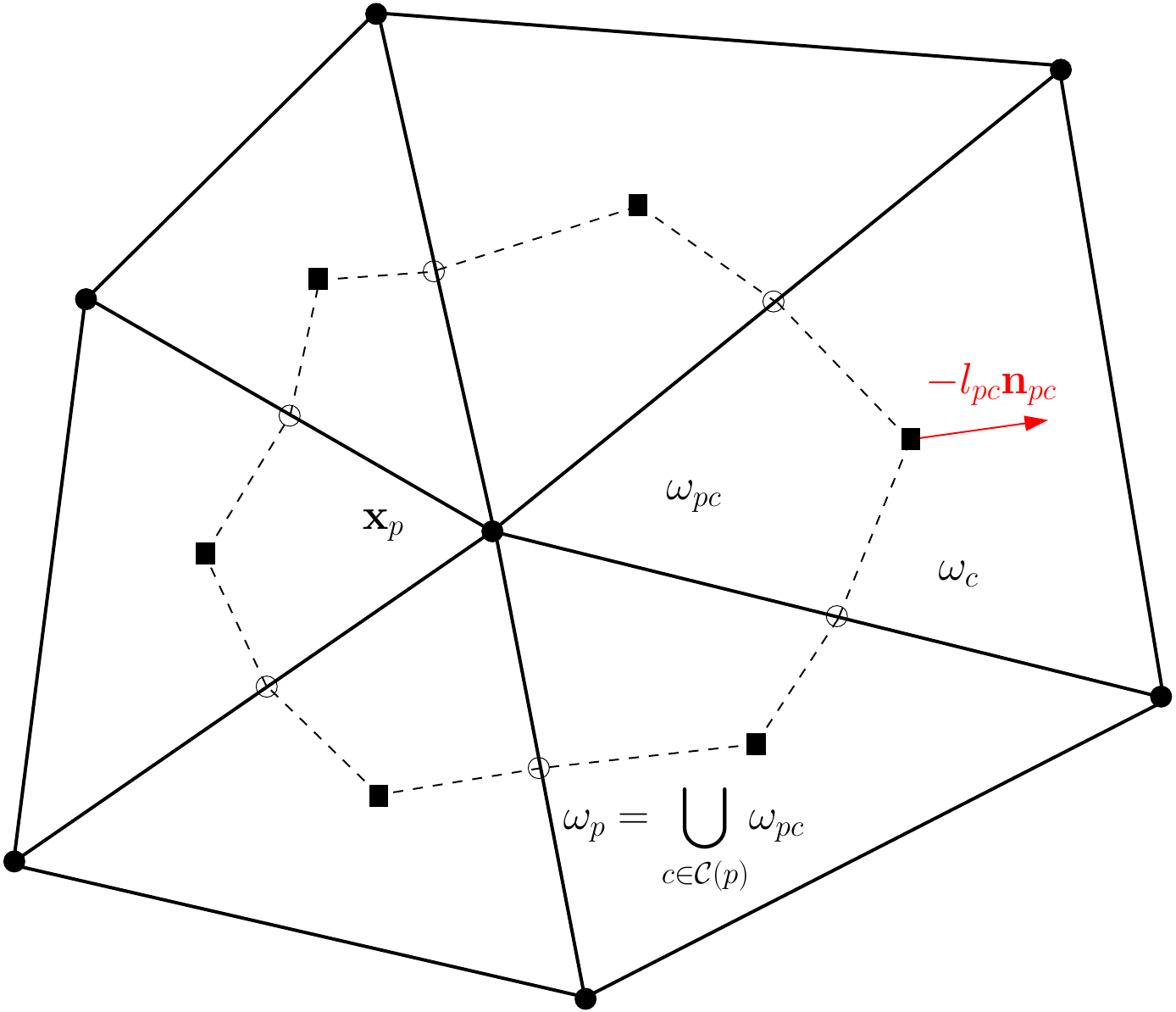}}
\end{center}
  \caption{Fragments of a triangular grid (a)Geometric quantities attached to the triangular cell $\omega_c$, (b):Dual cell attached to point $p$.}
  \label{fig:trigrid}
\end{figure}

The unit outward normals to the faces of $\omega_c$ impinging at $p$ are respectively $\mathbf{n}_{pc}^{-}$ and $\mathbf{n}_{pc}^{+}$. We also introduced the lengths of the corresponding ``half-faces'' $l_{pc}^{-}=\frac{1}{2}|\mathbf{x}_{p}-\mathbf{x}_{p^{-}}|$ and $l_{pc}^{+}=\frac{1}{2}|\mathbf{x}_{p^{+}}-\mathbf{x}_{p}|$. The volume of the triangular cell $\omega_c$ might be expressed in terms of the position vectors of its vertices. This allows us to define the corner normal vector related to point $p$ and cell $c$ as the gradient of the cell volume $\omega_c$ with respect to the vector position $\mathbf{x}_p$
\begin{equation}
  \label{eq:defcornervec}
  l_{pc}\mathbf{n}_{pc}=\frac{\partial |\omega_c|}{\partial \mathbf{x}_p},
\end{equation}
where $\mathbf{n}_{pc}^{2}=1$. This geometrical object is fundamental to derive staggered and cell-centered discretization of Lagrangian gas dynamics over moving grids, cf. \cite{Loubere2016} and the references therein. Computing the cell volume gradient with respect to $\mathbf{x}_{p}$ yields the analytical expression of the corner normal
\begin{equation}
  \label{eq:fromcornervec}
  l_{pc}\mathbf{n}_{pc}=\frac{1}{2} [\mathbf{x}_{p^{+}}(t)-\mathbf{x}_{p^{-}}(t) ] \times \mathbf{e}_z=l_{pc}^{-}\mathbf{n}_{pc}^{-}+l_{pc}^{+}\mathbf{n}_{pc}^{+}.
\end{equation}  
This shows that the corner normal is the summation of the outward normals to the ``half-faces'' impinging at point $p$, refer to Fig.~\ref{fig:trigrid}-(a). We observe that the corner normal fulfills the fundamental geometrical identity
\begin{equation}
  \label{eq:geomiden}
  \sum_{p \in \mathcal{P}(c)} l_{pc}\mathbf{n}_{pc}=\mathbf{0}.
\end{equation}
This result, by virtue of \eqref{eq:fromcornervec}, is a direct consequence of the fact that the sum of the outward normals to a closed contour is equal to zero.
In \cite{HTCLagrangeGPR} it was shown that the corner normal divided by the cell area $|\omega_c|$ is equal to the gradient of classical continuous P1 finite element basis functions and which establishes the link with the general method presented in the previous section for the special case $N=0$.

The corner vector is also the cornerstone to define discrete mathematical operators such as the discrete divergence operator following the approach introduced in \cite{Shashkov1996} in the more general framework of mimetic finite difference method. For any vector field $\mathbf{q}:=\mathbf{q}(\mathbf{x}),\;\text{for}\; \mathbf{x} \in \omega_c$ we define the cell-centered discrete divergence 
\begin{equation}
  \label{eq:discretedivprim}
  \text{DIV}_c(\mathbf{q})=\frac{1}{|\omega_c|}\sum_{p \in \mathcal{P}(c)} \langle l_{pc}\mathbf{n}_{pc}, \mathbf{q}_{p} \rangle,
\end{equation}
where $\mathbf{q}_p=\mathbf{q}(\mathbf{x}_{p})$. Identity \eqref{eq:geomiden} implies that the divergence of constant vectors is equal to zero. Note that the discrete gradient operator over $\omega_c$ might be defined in a similar manner. The mathematical properties of the discrete operators and their application to construct numerical methods for solving different classes of partial differential equations are described in the review paper \cite{Lipnikov2014}. Now, considering the dual cell $\omega_p$, which is the union of the subcells attached to point $p$, refer to Fig~\ref{fig:trigrid}, we define the discrete divergence and gradient operators of a vector as follows
\begin{align}
& \text{DIV}_p(\mathbf{q})=-\frac{1}{|\omega_p|}\sum_{c \in \mathcal{C}(p)} \langle l_{pc} \mathbf{n}_{pc}, \mathbf{q}_c \rangle,  \label{eq:discretedivdual} \\
&\text{GRAD}_p(\mathbf{q})=-\frac{1}{|\omega_p|}\sum_{c \in \mathcal{C}(p)} l_{pc} \mathbf{q}_{c} \otimes \mathbf{n}_{pc}. \label{eq:discretegraddual}
\end{align}
Here, $\otimes$ denotes the dyadic product and $\mathbf{q} \otimes \mathbf{n}$ is the second-order tensor whose $ij$ component in the Cartesian basis is $(\mathbf{q} \otimes \mathbf{n})_{ij}=q_i n_j$. We point out that we have used the fact that $-l_{pc}\mathbf{n}_{pc}$ is the outward normal to the contour of the dual cell $\omega_p$. 
\subsubsection{Semi-discrete node-based Finite volume discretization}
\label{sssec:sdfcl}
Let us define the cell-averaged quantity
$$\mathbf{q}_c=\frac{1}{|\omega|_c} \int_{\omega_c} \mathbf{q}\,\mathrm{d}v.$$
Integrating the system of conservation laws \eqref{eq:syscl} over $\omega_c$ and applying the Green-Gauss theorem we get
$$|\omega_c| \frac{\mathrm{d} \mathbf{q}_c}{\mathrm{d}t} +\int_{\partial \omega_c} \mathbf{f}(\mathbf{q}) \mathbf{n}\,\mathrm{d}s=\mathbf{0}.$$
Employing a node-based quadrature formula for the surface integral in the foregoing equation leads us the following node-based approximation of the integral flux
$$\int_{\partial \omega_c(t)} \mathbf{f}(\mathbf{q})\mathbf{n}\,\mathrm{d}s=\sum_{p \in \mathcal{P}(c)} l_{pc} \mathbf{f}_{p} \mathbf{n}_{pc},$$
where $\mathbf{f}_{p}$ is the tensor flux approximation at point $p$. {\it A priori}, $\mathbf{f}_{p}$ should depend on all the states surrounding point $p$, namely $\mathbf{f}_p := \mathbf{f}_{p}(\mathbf{q}_1,\dots,\mathbf{q}_{|\mathcal{C}(p)|})$. Moreover, we assume that this multidimensional tensor flux approximation is consistent \cite{Abgrall2026}, that is when $\mathbf{q}_c=\mathbf{q}$ for all $c \in \mathcal{C}(p)$ then $\mathbf{f}_{p}(\mathbf{q},\dots,\mathbf{q})=\mathbf{f}(\mathbf{q})$. Gathering the previous definition we arrive at the node-based Finite Volume discretization of the system of conservation laws \eqref{eq:syscl}
\begin{equation}
  \label{eq:semidis}
  |\omega_c| \frac{\mathrm{d} \mathbf{q}_c}{\mathrm{d} t}+\sum_{p \in \mathcal{P}(c)} l_{pc} \mathbf{f}_{p} \mathbf{n}_{pc}=\mathbf{0}.
\end{equation}
We observe that this scheme is conservative by construction by virtue of the geometrical identity \eqref{eq:geomiden} and since the corner normal divided by the cell volume is the gradient of the continuous finite element P1 basis functions the above scheme is identical with \eqref{eqn.cgdg} for the special case $N=0$. In addition, it is worth noting that the present Finite Volume scheme might be recasted under the form of the following Residual Distribution scheme
\begin{equation}
  \label{eq:semidisRD}
  |\omega_c| \frac{\mathrm{d} \mathbf{q}_c}{\mathrm{d} t}+\sum_{p \in \mathcal{P}(c)} \boldsymbol{\Phi}_{c}^{p}=\mathbf{0},
\end{equation}
where the corner fluctuation $\boldsymbol{\Phi}_{c}^{p}$ reads
$$\boldsymbol{\Phi}_{c}^{p}=l_{pc} \left [ \mathbf{f}_{p} -\mathbf{f}(\mathbf{q_c}) \right ] \mathbf{n}_{pc}.$$ 
\subsubsection{Structural form of the node-based multidimensional flux}
We derive the structural form of the node-based multidimensional flux, $\mathbf{f}_p$, so that a semi-discrete entropy inequality is ensured. First, the semi-discrete equation satisfied by the entropy cell-averaged results from taking the dot product of \eqref{eq:semidisRD} by $\mathbf{p}_c$
$$|\omega_c| \frac{\mathrm{d} \mathcal{E}(\mathbf{q_c})}{\mathrm{d} t} +\sum_{p \in \mathcal{P}(c)} \langle \boldsymbol{\Phi}_{c}^{p}, \mathbf{p}_c \rangle =0.$$
Let us develop the term between brackets in the foregoing equation expressing the corner fluctuation in terms of the nodal flux
\begin{equation}
  \label{eq:entropfluctuat}
  \langle \boldsymbol{\Phi}_{c}^{p} ,\mathbf{p}_c\rangle=\langle l_{pc} \left [ \mathbf{f}_{p} -\mathbf{f}(\mathbf{q_c}) \right ] \mathbf{n}_{pc} , \rangle= \langle l_{pc} \mathbf{f}_{p} \mathbf{n}_{pc}, \mathbf{p}_c \rangle + \langle l_{pc} \mathbf{f}^{\top}(\mathbf{q}_c) \mathbf{p}_c, \mathbf{n}_{pc} \rangle.
\end{equation}
Before proceeding further, let us recall some useful mathematical notation. For any second-order tensor $\mathbf{A}$ and any pair of vectors $(\mathbf{n},\mathbf{p})$, one has
\begin{equation}
\langle \mathbf{A}\mathbf{n}, \mathbf{p}\rangle
= \Tr{(\mathbf{A}\mathbf{n} \otimes \mathbf{p})}
= \Tr{(\mathbf{A}(\mathbf{n} \otimes \mathbf{p}))}
= \mathbf{A} : (\mathbf{p} \otimes \mathbf{n}).
\end{equation}
Here, $\Tr{()}$ denotes the trace operator and $:$ the inner product of second-order tensors. These quantities are defined as follows \cite{Gurtin2009}: $\Tr{(\mathbf{A})}=\sum_{i=1\dots \mathrm{d}} A_{ii}$, where $A_{ij}$ denotes the $ij$ component of $\mathbf{A}$ in the Cartesian basis; and, for any second-order tensor $\mathbf{B}$, the inner product between $\mathbf{A}$ and $\mathbf{B}$ is given by $\mathbf{A} : \mathbf{B}=\Tr{(\mathbf{A}^{\top} \mathbf{B})}$.

Bearing this in mind, \eqref{eq:entropfluctuat} turns into
$$\langle \boldsymbol{\Phi}_{c}^{p} ,\mathbf{p}_c\rangle=l_{pc} \mathbf{f}_{p} : (\mathbf{p}_c \otimes \mathbf{n}_{pc}) -\langle l_{pc} (\boldsymbol{\psi}_c +\mathbf{F}_{c}),\mathbf{n}_{pc} \rangle,$$
by virtue of the definition of the potential function $\boldsymbol{\psi}$. Summing the foregoing expression over all the cells surrounding point $p$ we get
\begin{align}
  \label{eq:sumentr}
  \sum_{c \in \mathcal{C}(p)} \langle \boldsymbol{\Phi}_{c}^{p} ,\mathbf{p}_c\rangle=&\mathbf{f}_{p} : \sum_{c \in \mathcal{C}(p)} l_{pc} (\mathbf{p_c} \otimes \mathbf{n}_{pc}) -\sum_{c \in \mathcal{C}(p)} \langle l_{pc} \boldsymbol{\psi}_{c}, \mathbf{n}_{pc} \rangle  -\sum_{c \in \mathcal{C}(p)} \langle l_{pc} \mathbf{F}_{c},\mathbf{n}_{pc} \rangle \nonumber \\
  = &-|\omega_p| \left [ \mathbf{f}_{p} : \text{GRAD}_{p} (\mathbf{p}) -\text{DIV}_{p}(\boldsymbol{\psi}) \right ] -\sum_{c \in \mathcal{C}(p)} \langle l_{pc} \mathbf{F}_c ,\mathbf{n}_{pc} \rangle.
\end{align}
Here, we have introduced the discrete gradient and divergence operators defined over the dual cell $\omega_p$ according to \eqref{eq:discretedivdual} and \eqref{eq:discretegraddual}.
Finally, we observe that the entropy consistency condition at the node \cite{Abgrall2026}
\begin{equation}
  \label{eq:entropcond}
  \sum_{c \in \mathcal{C}(p)} \langle l_{pc} \boldsymbol{\Phi}_{c}^{p}, \mathbf{p}_{c} \rangle \leq -\sum_{c \in \mathcal{C}(p)} \langle l_{pc} \mathbf{F}_{c} , \mathbf{n}_{pc} \rangle,
\end{equation}
is fulfilled if and only
\begin{equation}
  \label{eq:tadmor}
  \mathbf{f}_{p}:\text{GRAD}_{p} (\mathbf{p}) -\text{DIV}_{p}(\boldsymbol{\psi}) \geq 0.
\end{equation}
The latter inequality can be interpreted as the multidimensional extension of Tadmor's condition \cite{Tadmor2003} for defining an entropy-stable flux in the one-dimensional case. A sufficient condition for satisfying \eqref{eq:tadmor} is obtained by introducing the following {\it ansatz} for the multidimensional flux:
\begin{equation}
  \label{eq:multidimensional_flux}
  \mathbf{f}_{p}= \left \{ \mathbf{f} \right \}_{p}-\frac{1}{2} \varepsilon_p h_{p} a_p \mathbf{H}_{p} \text{GRAD}_{p} (\mathbf{p}).
\end{equation}
Here, $\left \{ \right \}_{p}$ denotes some average defined at point $p$ of the cell values of the tensor flux, $\varepsilon_p >0$ is a viscosity coefficient, $h_p$ is a length at node $p$, $a_p>0$ is a characteristic velocity at point $p$ and $\mathbf{H}_{p}$ is the Hessian matrix evaluated at point $p$. With this expression of the multidimensional flux we arrive at
$$\mathbf{f}_{p}:\text{GRAD}_{p} (\mathbf{p}) -\text{DIV}_{p}(\boldsymbol{\psi})=\left \{ \mathbf{f} \right \}_{p}:\text{GRAD}_{p} (\mathbf{p})-\text{DIV}_{p}(\boldsymbol{\psi})-\frac{1}{2}\varepsilon_p h_p a_p \mathbf{H}_p \text{GRAD}_{p} (\mathbf{p}) : \text{GRAD}_{p} (\mathbf{p}).$$
The last term in the right-hand side of the foregoing equation is positive by virtue of the negative definiteness of $\mathbf{H}_p$. Indeed, $-\mathbf{H}_p$ is symmetric positive definite and we can computed its square root, $\mathbf{S}_p$, which satisfies $\mathbf{S}^{2}=-\mathbf{H}_p$ and then
\begin{align*}
  -\mathbf{H}_p\text{GRAD}_{p} (\mathbf{p}):\text{GRAD}_{p} (\mathbf{p})=&\Tr{\left [\text{GRAD}_{p}^{\top} (\mathbf{p}) \mathbf{S}_p \mathbf{S}_p \text{GRAD}_{p}^{\top} (\mathbf{p})\right]} \\
  =&\Tr{\left [(\mathbf{S}_p \text{GRAD}_{p} (\mathbf{p}))^{\top} \mathbf{S}_p \text{GRAD}_{p} (\mathbf{p})\right ]}\\
  =& (\mathbf{S}_p \text{GRAD}_{p} (\mathbf{p})) : (\mathbf{S}_p \text{GRAD}_{p} (\mathbf{p})) \geq 0.
\end{align*}
Finally, to achieve the control of the sign of the left-hand side of the multidimensional Tadmors' condition, we supplement the expression of the multidimensional nodal flux as follows
\begin{equation}
  \label{eq:multidimensional_fluxbis}
  \mathbf{f}_{p}= \left \{ \mathbf{f} \right \}_{p}-\frac{1}{2} \varepsilon_p h_{p} a_p \mathbf{H}_{p} \text{GRAD}_{p} (\mathbf{p})+\alpha_p\mathbf{H}_{p} \text{GRAD}_{p} (\mathbf{p}). 
\end{equation}
Here, the coefficient $\alpha_p$ is chosen so that the latter supplementary term cancels exactly with $\left \{ \mathbf{f} \right \}_{p}:\text{GRAD}_{p} (\mathbf{p})-\text{DIV}_{p}(\boldsymbol{\psi})$
$$\alpha_p=-\frac{\left \{ \mathbf{f} \right \}_{p}:\text{GRAD}_{p} (\mathbf{p})-\text{DIV}_{p}(\boldsymbol{\psi})}{\mathbf{H}_p\text{GRAD}_{p} (\mathbf{p}):\text{GRAD}_{p} (\mathbf{p})}.$$

\section{Properties of the method}
\label{sec.prop}
Now we come back to the general case described in Section \ref{sec.method}.
\subsection{Basic properties}

We first recall two basic properties of the primary discrete nabla operator, as already discussed in \cite{CompatibleDG1}. The continuous finite element space $\mathcal{W}_h^{N+1}$ is \textit{globally continuous}, hence it is obvious that all \textit{tangential derivatives}, i.e. the tangential components of the gradient, are \textit{continuous} across elements. We also stress again that the two ansatz spaces $\Uh$ and $\Wh$ are deliberitely chosen in such a way that the DG space $\mathcal{U}_h^N$ is rich enough to represent all first derivatives (the full gradient) of a function represented in the finite element space $\mathcal{W}_h^{N+1}$ \textit{exactly}. We therefore can state the 
\begin{proposition}
	\label{prop.td} 
	For any discrete scalar field $\Zh \in \mathcal{W}_h^{N+1}$ and any pair of elements $T_b$ and $T_c$  the jump of the tangential derivatives vanishes across the common edge / face $\partial T_{bc} = T_b \cap T_c$, i.e.  
	\begin{equation}
		 \left( \nabla \Zh^+ - \nabla \Zh^- \right) \cdot \mathbf{t} = 0, 
 	\end{equation} 
	with $\mathbf{t}$ all vectors tangential to $\partial T_{bc}$ and $\Zh^+$, $\Zh^-$ the boundary-extrapolated values of $\Zh$ seen from elements $T_b$ and $T_c$, respectively.  
\end{proposition}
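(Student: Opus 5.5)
The idea is to reduce the statement to the global continuity of $\Zh$ across the shared face. Because $\Zh \in \Wh$, its restrictions $\Zh|_{T_b}$ and $\Zh|_{T_c}$ are polynomials of degree $N+1$ that coincide pointwise on the common edge / face $\partial T_{bc}$. By definition, a tangential derivative is a directional derivative along a direction lying in the affine hull of $\partial T_{bc}$. Such a derivative only involves values of the function on $\partial T_{bc}$. Two polynomials that agree on the face therefore have equal tangential derivatives there.

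The steps are as follows.

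\begin{enumerate}[(i)]
\item Denote by $P_b$ and $P_c$ the polynomial restrictions of $\Zh$ to $T_b$ and $T_c$, each extended to all of $\mathbb{R}^d$. Continuity of $\Zh$ (membership in $\Wh$) gives $P_b = P_c$ on $\partial T_{bc}$. Equivalently, $R := P_b - P_c$ vanishes on the relatively open face.
\item Fix $\x_0$ in the relative interior of $\partial T_{bc}$ and a tangential vector $\mathbf{t}$, i.e. a vector parallel to the hyperplane $H$ containing the face. For small $s$ the point $\x_0 + s\mathbf{t}$ stays in $\partial T_{bc}$, so $R(\x_0+s\mathbf{t}) = 0$. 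Differentiating at $s=0$ gives $\nabla R(\x_0)\cdot \mathbf{t} = 0$.
\item Since $\nabla \Zh^{\pm}$ are the boundary-extrapolated gradients, they equal $\nabla P_b$ and $\nabla P_c$ on the face. Step (ii) then yields $(\nabla \Zh^+ - \nabla \Zh^-)\cdot\mathbf{t} = 0$ on the relative interior of $\partial T_{bc}$.
\item $R$ is a polynomial, so $\nabla R \cdot \mathbf{t}$ is also a polynomial. It vanishes on a relatively open subset of $H$, hence on all of $H$, and in particular on the closed face. This extends the identity to the face's boundary.
\end{enumerate}

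As a remark, one can state the stronger fact that $R$ vanishing on $H$ implies $R = \ell\, S$, where $\ell$ is the affine function defining $H$. Consequently $\nabla R = S\,\nabla\ell + \ell\,\nabla S$ is parallel to the normal $\n$ on $H$. The jump of the full gradient, which lies in $\Uh$ exactly, is therefore purely normal.

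No step is a real obstacle. The only point requiring care is making precise that a tangential direction remains inside the face for small displacements. This holds at relative interior points, and the closure is handled by the polynomial-continuity argument in step (iv).
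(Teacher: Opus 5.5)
Your proof is correct and follows the same route as the paper. Global continuity of $\Zh$ gives $\Zh^+ = \Zh^-$ on $\partial T_{bc}$, and differentiating this along directions lying in the face gives equal tangential derivatives; you simply make explicit the steps the paper leaves implicit, namely working at relative interior points and extending to the closed face, and your closing remark that the gradient jump is purely normal is exactly the fact the paper then uses in Proposition~\ref{prop.nd} and Theorem~\ref{lemma.ds}.
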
 
\begin{proof}
	Due to the \textit{global continuity} of $\Zh \in \mathcal{W}_h^{N+1}$ one has clearly  
	$$ \Zh^+ = \Zh^-, \quad \textnormal{ or, equivalently } \quad \Zh^+ - \Zh^- = 0, \qquad \forall \mathbf{x} \in \partial T_{bc}.$$
	It is therefore obvious that for all vectors $\mathbf{t}$ tangential to $\partial T_{bc}$ one has 
	$$\frac{\partial \Zh^+}{\partial \mathbf{t}} = \frac{\partial \Zh^-}{\partial \mathbf{t}}, \qquad \textnormal{ or, equivalently } \qquad 
	\left( \nabla \Zh^+ - \nabla \Zh^- \right) \cdot \mathbf{t} = 0, \qquad \forall \mathbf{x} \in \partial T_{bc}.
	$$ 
\end{proof}
As direct \textit{consequence} of Proposition \ref{prop.td} we also immediately have the property that the \textit{normal component} of the curl of a vector field $\Ah \in \mathcal{W}_h^{N+1}$ is \textit{continuous} across elements. 
\begin{proposition}
	\label{prop.nd} 
	For any discrete vector field $\Ah \in \mathcal{W}_h^{N+1}$ and any pair of elements $T_b$ and $T_c$  the jump of the normal component of the curl vanishes across the common edge / face $\partial T_{bc} = T_b \cap T_c$, i.e.  
	\begin{equation}
		\left( \nabla \times \Ah^+ - \nabla \times \Ah^- \right) \cdot \mathbf{n} = 0,
	\end{equation} 
	with $\mathbf{n}$ the unit normal vector of $\partial T_{bc}$ pointing from $T_c$ to $T_b$ and $\Ah^+$, $\Ah^-$ the boundary-extrapolated values of $\Ah$ seen from elements $T_b$ and $T_c$, respectively.  
\end{proposition}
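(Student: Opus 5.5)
The plan is to reduce the statement to Proposition~\ref{prop.td}, applied componentwise. Write $\Ah = (A_1,\dots,A_d)$. Each Cartesian component $A_k$ is itself a discrete scalar field in $\Wh$, so Proposition~\ref{prop.td} applies to it. Hence, on $\partial T_{bc}$, the jump $[\![\nabla A_k]\!] := \nabla A_k^+ - \nabla A_k^-$ satisfies $[\![\nabla A_k]\!]\cdot\mathbf{t} = 0$ for every tangential vector $\mathbf{t}$. Since the face is a hyperplane with unit normal $\mathbf{n}$, this means the jump of each component gradient is purely normal. That is, there are scalars $\lambda_k(\x)$ with
\begin{equation*}
  [\![\nabla A_k]\!] = \lambda_k \, \mathbf{n}, \qquad k=1,\dots,d .
\end{equation*}
Equivalently, the jump of the Jacobian is a rank-one matrix, $[\![\nabla \Ah]\!] = \boldsymbol{\lambda}\otimes\mathbf{n}$, which is the usual Hadamard jump condition.

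The second step is to compute the jump of the curl. It is linear in the Jacobian, so it depends only on $[\![\nabla \Ah]\!]$. In index notation,
\begin{equation*}
  [\![\nabla\times\Ah]\!]_i = \varepsilon_{ijk}\,[\![\partial_j A_k]\!] = \varepsilon_{ijk}\, n_j \lambda_k ,
\end{equation*}
that is, $[\![\nabla\times\Ah]\!] = \mathbf{n}\times\boldsymbol{\lambda}$. Taking the dot product with $\mathbf{n}$ gives $(\mathbf{n}\times\boldsymbol{\lambda})\cdot\mathbf{n} = \varepsilon_{ijk} n_i n_j \lambda_k = 0$, because $n_i n_j$ is symmetric and $\varepsilon_{ijk}$ is antisymmetric in $i,j$. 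This is exactly the claim.

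A more geometric alternative avoids introducing $\boldsymbol{\lambda}$. Choose an orthonormal tangent frame $\mathbf{t}_1,\mathbf{t}_2$ with $\mathbf{t}_1\times\mathbf{t}_2=\mathbf{n}$. Then
\begin{equation*}
  (\nabla\times\Ah)\cdot\mathbf{n} = \mathbf{t}_1\cdot\partial_{\mathbf{t}_2}\Ah \;-\; \mathbf{t}_2\cdot\partial_{\mathbf{t}_1}\Ah ,
\end{equation*}
which contains only tangential derivatives. By Proposition~\ref{prop.td} these are continuous across $\partial T_{bc}$, so the normal component of the curl is continuous as well. For $d=2$ the normal component is read with $\mathbf{n}$ replaced by $\mathbf{e}_z$ and the statement degenerates accordingly. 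The main case is $d=3$.

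The main obstacle is verifying the tangential-frame identity for $(\nabla\times\Ah)\cdot\mathbf{n}$, and this is routine. Write $\nabla\times\Ah\cdot\mathbf{n} = \det(\mathbf{n},\nabla,\Ah)$ and expand $\nabla = \mathbf{t}_1\partial_{\mathbf{t}_1}+\mathbf{t}_2\partial_{\mathbf{t}_2}+\mathbf{n}\partial_{\mathbf{n}}$. The normal-derivative term drops out because the determinant has a repeated $\mathbf{n}$ column. I would use the index computation above as the primary argument since it is shortest, and the frame version as a remark.
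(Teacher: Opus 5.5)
Your primary index argument is correct and is essentially the paper's proof: apply Proposition~\ref{prop.td} componentwise to get a purely normal jump of $\nabla\Ah$, then use $\varepsilon_{ijk}n_in_j=0$. Two slips in the side remarks, neither of which affects that argument. First, the frame identity has the wrong overall sign; with $\mathbf{t}_1\times\mathbf{t}_2=\mathbf{n}$ it reads $\mathbf{t}_2\cdot\partial_{\mathbf{t}_1}\Ah-\mathbf{t}_1\cdot\partial_{\mathbf{t}_2}\Ah$. Second, for $d=2$ you must not replace $\mathbf{n}$ by $\mathbf{e}_z$, because $\partial_x\tilde{A}_2-\partial_y\tilde{A}_1$ does jump across edges in general. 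The meaningful 2D version keeps the in-plane edge normal: for example, for $\Ah=\tilde{A}_3\mathbf{e}_z$ the normal component of the curl is a tangential derivative of $\tilde{A}_3$, hence continuous.
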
 
\begin{proof}
	The proof becomes easier when using the Einstein index notation, i.e. here we will denote the components of the vector field $\Ah$ by $\tilde{A}_p$ and drop the subscript $h$ to ease notation. \\ Furthermore, the components of the tangential vectors $\mathbf{t}$ will be denoted by $t_k$ and those of the unit normal vector $\mathbf{n}$ by $n_k$. 
	Due to the \textit{global continuity} of $\Ah \in \mathcal{W}_h^{N+1}$ one has clearly componentwise   
	$$ \tilde{A}_p^+ = \tilde{A}_p^-, \quad \textnormal{ or } \quad \tilde{A}_p^+ - \tilde{A}_p^- = 0, \qquad \forall \mathbf{x} \in \partial T_{bc}.$$
	It is therefore obvious that for all vectors $\mathbf{t}$ tangential to $\partial T_{bc}$ one has componentwise 
	$$	\left( \partial_j \tilde{A}_p^+ - \partial_j \tilde{A}_p^- \right) t_j = 0, \qquad \forall \mathbf{x} \in \partial T_{bc}.
	$$ 
	Hence, in general, the only non-vanishing component of the jump in the gradient of $\Ah$ at the element boundary $\partial T_{bc}$ is the jump of the derivative in the normal direction, i.e. 
	$$ \left( \partial_j  \tilde{A}_p^+ - \partial_j  \tilde{A}_p^- \right) = \partial_j \left( \tilde{A}_p^+ - \tilde{A}_p^- \right) = \partial_m \left( \tilde{A}_p^+ - \tilde{A}_p^- \right) n_m \, n_j. $$
	Using the above result and the usual fully antisymmetric Levi-Civita tensor $\epsilon_{ijk}$ in Einstein index notation it is now easy to see that 
	$$ 
	\left( \nabla \times \Ah^+ - \nabla \times \Ah^- \right) \cdot \mathbf{n} = 
	\epsilon_{ijk} \left( \partial_j \tilde{A}_k^+ - \partial_j \tilde{A}_k^- \right) n_i = 
	\epsilon_{ijk} \left( \partial_m \tilde{A}_k^+ - \partial_m \tilde{A}_k^- \right) n_m n_i n_j = 0, 
	$$ 
	since the contraction of the anti-symmetric Levi-Civita tensor with the symmetric tensor $n_i n_j$ vanishes, i.e. $\epsilon_{ijk} n_i n_j = 0$. 
\end{proof}

\subsection{Discrete Schwarz theorem and resulting discrete vector calculus identities} 

As already shown in \cite{CompatibleDG1}, the two discrete nabla operators introduced before satisfy a discrete Schwarz theorem. To make this paper self-contained and to facilitate the reader, here we briefly summarize the main findings of \cite{CompatibleDG1}. The essential key feature of the two nabla operators is that they satisfy a discrete version of the Schwarz theorem. Then, as a direct consequence, the two discrete vector calculus identities follow automatically. 

\begin{theorem}
	\label{lemma.ds}
 The two discrete nabla operators \eqref{eqn.primary.nabla} and \eqref{eqn.dual.nabla} satisfy the following discrete Schwarz theorem
 \begin{equation}
 	  (\tilde{\partial_j})_p^c  \left(\partial_k \right)_c^q Z_q = (\tilde{\partial_k})_p^c  \left(\partial_j\right)_c^q Z_q.
 	  \label{eqn.ds.scalar} 
 \end{equation}
\end{theorem}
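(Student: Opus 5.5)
The plan is to express both sides of \eqref{eqn.ds.scalar} as integrals over $\Omega$ of a CG basis function times a mixed second derivative of $\Zh$, and then use the symmetry of the Hessian.

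First I unpack the left-hand side with the definitions \eqref{eqn.primary.nabla} and \eqref{eqn.dual.nabla}:
\begin{equation*}
(\tilde{\partial}_j)_p^c (\partial_k)_c^q Z_q = - K_{cpj} D_{ca}^{-1} K_{aqk} Z_q .
\end{equation*}
Here $D_{ca}^{-1}K_{aqk}Z_q$ are the DG degrees of freedom $v_{kc}$ of the primary gradient. By the exactness property stated after \eqref{eqn.primary.nabla}, the function $\phi_c v_{kc}$ equals $\partial_k \Zh$ pointwise in each element, because $\nabla\Zh\in\Uh$ and the element-local $L^2$ projection onto $\Uh$ is the identity there. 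Then
\begin{equation*}
-K_{cpj} v_{kc} = -\int_\Omega \partial_j\psi_p \, \phi_c v_{kc}\, d\x = -\int_\Omega \partial_j \psi_p\, \partial_k \Zh \, d\x ,
\end{equation*}
where the integral is understood as a sum of element integrals. The right-hand side becomes $-\int_\Omega \partial_k\psi_p\,\partial_j\Zh\,d\x$ in the same way. So it suffices to show that
\begin{equation*}
\int_\Omega \partial_j\psi_p\,\partial_k\Zh\,d\x=\int_\Omega \partial_k\psi_p\,\partial_j\Zh\,d\x .
\end{equation*}

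Second, I integrate by parts element by element, moving $\partial_j$ off $\psi_p$ in the first integral:
\begin{equation*}
\sum_{T}\Big(-\int_T \psi_p\,\partial_j\partial_k\Zh\,d\x + \int_{\partial T}\psi_p\,\partial_k\Zh\, n_j\,dS\Big).
\end{equation*}
Inside each element $\Zh$ is a polynomial, so the classical Schwarz theorem makes the volume term symmetric in $j,k$. The boundary terms need to be handled as follows.
\begin{itemize}
\item On interior faces, $\psi_p$ is continuous, and they sum to $\int_{\mathcal{E}_h}\psi_p\,[\![\partial_k\Zh]\!]\,n_j\,dS$.
\item By Proposition \ref{prop.td}, only the normal part of the gradient jumps, i.e. $[\![\partial_k\Zh]\!]=\sigma n_k$ with $\sigma$ the jump of the normal derivative.
\item Hence the interior-face integrand is $\psi_p\sigma n_k n_j$, which is symmetric in $j,k$.
\end{itemize}

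The main obstacle is the boundary $\partial\Omega$, whose term $\int_{\partial\Omega}\psi_p\,\partial_k\Zh\,n_j\,dS$ is not symmetric in general. I would dispose of it in one of two ways. Either I invoke the same assumption the paper uses when deriving the dual operator, namely that boundary contributions on $\partial\Omega$ are dropped (periodic or compactly supported data). Or I subtract the two versions directly, $\int_{\partial\Omega}\psi_p(\partial_k\Zh\,n_j-\partial_j\Zh\,n_k)\,dS$. This integrand involves only tangential derivatives of $\Zh$ along $\partial\Omega$, and it vanishes on a closed boundary by Stokes' theorem applied to $\psi_p\Zh$, using continuity across boundary edges. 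With either argument the two sides agree, which proves \eqref{eqn.ds.scalar}.
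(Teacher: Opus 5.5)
Your reduction of both sides to $-\int_\Omega \partial_j\psi_p\,\partial_k\Zh\,d\x$ and $-\int_\Omega \partial_k\psi_p\,\partial_j\Zh\,d\x$ matches the paper's proof. So do the elementwise integration by parts, the classical Schwarz theorem inside elements, and the use of Proposition~\ref{prop.td} to make the interface jump term proportional to $n_jn_k$. Your first way of handling $\partial\Omega$ is also what the paper does. Its proof simply omits the $\partial\Omega$ integral, relying on the same ``suitable boundary conditions'' invoked when deriving the dual operator \eqref{eqn.dual.nabla}. Keep that option; you are in fact more explicit about it than the paper.

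Your second option, however, is false and must be removed. The integrand $\psi_p(\partial_k\Zh\,n_j-\partial_j\Zh\,n_k)$ is $\psi_p$ times a tangential derivative of $\Zh$, not the tangential derivative of a single function. Stokes' theorem applied to $\psi_p\Zh$ therefore only gives (in 2D) $\int_{\partial\Omega}\psi_p\,\partial_\tau\Zh\,dS=-\int_{\partial\Omega}\Zh\,\partial_\tau\psi_p\,dS$. That moves the derivative but does not make the integral vanish.

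Here is a concrete counterexample. Take $\Omega=(0,1)^2$, $N=0$, $\Zh=y\in\Wh$, $\psi_p$ the P1 hat function of the corner $p=(0,0)$, and $(j,k)=(1,2)$. Then
\[
(\tilde{\partial}_1)_p^c(\partial_2)_c^qZ_q-(\tilde{\partial}_2)_p^c(\partial_1)_c^qZ_q=-\int_\Omega\partial_1\psi_p\,d\x=-\int_{\partial\Omega}\psi_p\,n_1\,dS=\tfrac12|e|\neq0,
\]
where $e$ is the boundary edge on $\{x=0\}$ containing $p$.

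So the discrete Schwarz identity genuinely fails at boundary nodes unless the $\partial\Omega$ contribution is removed by assumption. Your tangential-derivative observation shows the term vanishes only in three cases: when $\psi_p|_{\partial\Omega}=0$ (interior nodes), when $\Zh$ is constant on each component of $\partial\Omega$, or in the periodic case. The sentence ``with either argument the two sides agree'' should therefore read ``with the first argument''.
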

\begin{proof}
    For clarity we use the Einstein index notation. From the definitions \eqref{eqn.primary.nabla} and \eqref{eqn.dual.nabla} we have 
\begin{equation} 
\begin{split}
	 (\tilde{\partial}_j)_p^c \, (\partial_k)_c^q Z_q &= - \int \limits_{\Omega} \nabla \psi_p \nabla \Zh \dx \\&= \int \limits_{\Omega \backslash \mathcal{E}_h} \psi_p \partial_j \partial_k \Zh \dx 
	+ \int \limits_{\mathcal{E}_h} \psi_p  \left( \partial_k \Zh^+  - \partial_k \Zh^- \right)  n_j \, dS,
	\label{eqn.auxeq}
    \end{split}
\end{equation}
with $\mathcal{E}_h$ the skeleton of the mesh. 
Since $\Zh$ is smooth inside each simplex $T_k$, the classical continuous Schwarz theorem obviously holds, $\partial_j \partial_k \Zh = \partial_k \partial_j \Zh$, and due to the global continuity of $\Zh$ across element edges / faces $\nabla \Zh$ is allowed to jump across $\mathcal{E}_h$ only in the normal direction and not in the tangential directions, hence also the jump term, which an be rewritten as  
$$ \left( \partial_k \Zh^+  - \partial_k \Zh^- \right)  n_j = \left( \partial_m \Zh^+ - \partial_m \Zh^- \right) n_m n_k n_j = \left( \partial_j \Zh^+  - \partial_j \Zh^- \right)  n_k $$ is clearly symmetric in the indices $j$ and $k$. 
Hence, the discrete Schwarz theorem follows: 
\begin{eqnarray} 
	&& (\tilde{\partial}_j)_p^c \, (\partial_k)_c^q Z_q = 
	\int \limits_{\Omega \backslash \mathcal{E}_h} \psi_p \partial_j \partial_k \Zh \dx 
	+ \int \limits_{\mathcal{E}_h} \psi_p  \left( \partial_k \Zh^+  - \partial_k \Zh^- \right)  n_j \, dS \nonumber \\ 
	&& =   
	\int \limits_{\Omega \backslash \mathcal{E}_h} \psi_p \partial_k \partial_j \Zh \dx 
	+ \int \limits_{\mathcal{E}_h} \psi_p  \left( \partial_j \Zh^+  - \partial_j \Zh^- \right)  n_k \, dS 
	= (\tilde{\partial}_k)_p^c \, (\partial_j)_c^q Z_q. 
	\label{eqn.discrete.schwarz}
\end{eqnarray}     	 
\end{proof}
As consequence we have 
\begin{corollary}
	\label{corr.ds}
	The two discrete nabla operators \eqref{eqn.primary.nabla} and \eqref{eqn.dual.nabla}  satisfy a discrete Schwarz theorem also for discrete vector fields 
	\begin{equation}
		(\tilde{\partial_j})_p^c  \left(\partial_k \right)_c^q A_{mq} = (\tilde{\partial_k})_p^c  \left(\partial_j\right)_c^q A_{mq}.
		\label{eqn.ds.vector} 
	\end{equation}
\end{corollary}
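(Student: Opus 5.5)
The corollary is a componentwise application of Theorem \ref{lemma.ds}. The discrete nabla operators \eqref{eqn.primary.nabla} and \eqref{eqn.dual.nabla} act only on the degree-of-freedom indices $c$, $p$, $q$. They never mix the physical tensor index $m$ of $\Ah = \psi_q A_{mq} \in \Wh$. So for each fixed $m$ I will set $Z_q := A_{mq}$ and regard the scalar field $\Zh^{(m)} = \psi_q A_{mq} \in \Wh$ as the potential. This field is globally continuous and piecewise polynomial of degree $N+1$, which is all that the proof of Theorem \ref{lemma.ds} uses.

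The steps, in order, are as follows.
\begin{enumerate}[(i)]
\item Write out the composite operator explicitly:
\begin{equation*}
(\tilde{\partial}_j)_p^c (\partial_k)_c^q A_{mq} = -K_{cpj} D_{ca}^{-1} K_{aqk} A_{mq}.
\end{equation*}
Here the index $m$ is a spectator.
\item Check that the primary operator reproduces the exact derivative. Since $\partial_k \Zh^{(m)} \in \Uh$, we have $\phi_c (\partial_k)_c^q A_{mq} = \partial_k \tilde A_{m,h}$. The dual operator applied to it then gives
\begin{equation*}
-\int_\Omega \partial_j \psi_p \, \partial_k \tilde A_{m,h} \dx .
\end{equation*}
\item Invoke Theorem \ref{lemma.ds} for each $m$. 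This gives the symmetry in $j$ and $k$, and \eqref{eqn.ds.vector} follows at once.
\end{enumerate}

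If one prefers an argument that does not rest on the theorem as a black box, I would redo the split of the last integral. It separates into an element-interior part, which is symmetric by the classical Schwarz theorem, and a skeleton jump part. For the jump part, Proposition \ref{prop.td} applied to $\Zh^{(m)}$ gives $\bigl(\partial_k \tilde A_{m}^+ - \partial_k \tilde A_m^-\bigr) n_j = \bigl(\partial_l \tilde A_{m}^+ - \partial_l \tilde A_m^-\bigr) n_l n_k n_j$, which is symmetric in $j$ and $k$.

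The only point needing care is step (ii). There I have to make sure that $D^{-1}K$ reproduces $\nabla \Zh$ exactly, so that the composite operator really equals the integral. This holds because the spaces are compatible: $\nabla \Wh \subset \Uh$. The rest is bookkeeping of indices. No real obstacle is expected, and boundary terms are dropped under the same assumption used to define the dual operator.
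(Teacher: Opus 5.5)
Your proposal is correct and follows the paper's proof, which likewise fixes the component index $m$ and replaces $Z_q$ by $A_{mq}$ in the proof of Theorem~\ref{lemma.ds}. Your explicit check in step (ii), that $D^{-1}K$ reproduces $\nabla \Zh$ exactly because $\nabla \Wh \subset \Uh$, only spells out a fact that the scalar proof already relies on.
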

\begin{proof}
   Replacing $Z_q$ by $A_{mq}$ in the proof of the previous Theorem \eqref{lemma.ds} leads immediately to the  result. 
\end{proof}

\begin{theorem}
Given the two discrete nabla operators \eqref{eqn.primary.nabla} and \eqref{eqn.dual.nabla}, 
a discrete scalar potential $\Zh \in \mathcal{W}^{N+1}_h$ with degrees of freedom $Z_q \in \mathbb{R}$ and a discrete vector potential $\Ah \in \mathcal{W}^{N+1}_h$ with degrees of freedom $\A_q \in \mathbb{R}^3$ and representation
\begin{equation}
    \Zh = \psi_q  Z_q \quad \in \mathcal{W}^{N+1}_h, \qquad 
    \Ah = \psi_q \A_q \quad \in \mathcal{W}^{N+1}_h, 
    \label{eqn.disc.potential}
\end{equation}
then the following discrete vector calculus identities hold:
\begin{equation}
   { \tilde{\nabla}_p^c \times \left( \nabla_c^q Z_q \right) = 0,}
    \label{eqn.disc.rotgrad}
\end{equation}
and
\begin{equation}
   { \tilde{\nabla}_p^c \cdot 
   \left( \nabla_c^q \times \mathbf{A}_q \right)} = 0.
    \label{eqn.disc.divrot}
\end{equation}
\end{theorem}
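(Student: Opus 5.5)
Both identities follow from the discrete Schwarz theorem by contracting with the Levi-Civita tensor $\epsilon_{ijk}$. The antisymmetry of $\epsilon_{ijk}$ cancels the symmetry in the two derivative indices established in Theorem~\ref{lemma.ds} and Corollary~\ref{corr.ds}. I will write both identities componentwise in Einstein index notation, using the definitions \eqref{eqn.primary.nabla} and \eqref{eqn.dual.nabla} of the primary and dual operators.

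For \eqref{eqn.disc.rotgrad}, the $i$-th component of the discrete curl of the discrete gradient at node $p$ reads
\[
\epsilon_{ijk} \, (\tilde{\partial}_j)_p^c \, (\partial_k)_c^q Z_q .
\]
By Theorem~\ref{lemma.ds}, the tensor $S_{jk} := (\tilde{\partial}_j)_p^c (\partial_k)_c^q Z_q$ is symmetric in $j,k$ for fixed $p$. Relabel the dummy indices $j \leftrightarrow k$ and use $\epsilon_{ikj} = -\epsilon_{ijk}$. This gives $\epsilon_{ijk} S_{jk} = \epsilon_{ikj} S_{kj} = -\epsilon_{ijk} S_{jk}$, hence the expression equals zero. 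This holds for every node $p$, so the vector of degrees of freedom vanishes. Since the dual operator does not include the inverse global mass matrix $M_{pq}^{-1}$, the degrees of freedom of the resulting field in $\Wh$ are $M^{-1}$ applied to a zero vector, which is zero. The identity therefore holds pointwise for the reconstructed field as well.

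For \eqref{eqn.disc.divrot}, the primary curl of $\Ah$ has DG degrees of freedom $\epsilon_{ikm} (\partial_k)_c^q A_{mq}$. Applying the dual divergence gives
\[
(\tilde{\partial}_i)_p^c \, \epsilon_{ikm} (\partial_k)_c^q A_{mq} = \epsilon_{ikm} \, (\tilde{\partial}_i)_p^c (\partial_k)_c^q A_{mq}.
\]
Pulling the constant $\epsilon_{ikm}$ through is legitimate because both operators act linearly on the degree-of-freedom index and commute with contraction over the physical tensor indices. By Corollary~\ref{corr.ds}, for each fixed $m$ the tensor $T_{ik}^{(m)} := (\tilde{\partial}_i)_p^c (\partial_k)_c^q A_{mq}$ is symmetric in $i,k$. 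Contracting it with $\epsilon_{ikm}$, which is antisymmetric in $i,k$, gives zero by the same relabeling argument.

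No step is a real obstacle once Theorem~\ref{lemma.ds} is available. The only points needing care are bookkeeping. First, the index order in $\tilde{\nabla}\times\nabla$ must be placed so that the symmetric pair is exactly the pair of derivative indices. Second, $\epsilon$ must be allowed to commute with the discrete operators. Third, one must note that the boundary terms on $\partial\Omega$ were dropped in defining the dual operator, so these identities hold under the same boundary assumptions as in the definition of the dual nabla operator.
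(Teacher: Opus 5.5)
Your proposal is correct and matches the paper's proof: both identities come from contracting the tensors $(\tilde{\partial}_j)_p^c(\partial_k)_c^q Z_q$ and $(\tilde{\partial}_i)_p^c(\partial_k)_c^q A_{mq}$, which are symmetric in the derivative indices by the discrete Schwarz theorem and its corollary, with the antisymmetric Levi-Civita symbol. Your caveat is also sound: the identities inherit whatever boundary assumptions allow the $\partial\Omega$ terms to be dropped, for example periodic boundaries or interior nodes $p$, since otherwise a tangential-derivative boundary term breaks the symmetry; the paper leaves this implicit.
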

\begin{proof}
    We employ Einstein index notation for convenience and also make use of the classical fully antisymmetric Levi-Civita tensor $\epsilon_{ijk}$, which is well-known and whose definition does not need to be repeated here. 
    The identity \eqref{eqn.disc.rotgrad} is easily proven at the aid of the definition of the curl and of the divergence in index notation   
    \begin{equation}
        \tilde{\nabla}_p^c \times \nabla_c^q Z_q = 
        \epsilon_{ijk} \, (\tilde{\partial}_j)_p^c \, (\partial_k)_c^q Z_q 
        = 0. 
        \label{eqn.disc.rotgrad2}
    \end{equation}
    The result of the expression on the left hand side is zero, because the contraction of the antisymmetric Levi-Civita symbol
    $\epsilon_{ijk}$ with a symmetric tensor  is zero. The tensor $(\tilde{\partial}_j)_p^c \, (\partial_k)_c^q Z_q $  is symmetric thanks to the  discrete Schwarz theorem \eqref{eqn.discrete.schwarz} above. 
    The second identity \eqref{eqn.disc.divrot} can be proven in the same manner: 
    \begin{equation}
        \tilde{\nabla}_p^c \cdot \nabla_c^q \times \mathbf{A}_q = 
        (\tilde{\partial}_i)_p^c \, \epsilon_{ijk}  (\partial_j)_c^q A_{kq}
        = 0,
        \label{eqn.disc.divrot2}
    \end{equation}    
    The expression on the left hand side is zero for the same reasons as before, i.e. simply using 
    \eqref{eqn.primary.nabla} and \eqref{eqn.dual.nabla} in index notation, the discrete Schwarz theorem for vector fields \eqref{eqn.ds.vector} and using the fact that the contraction of the antisymmetric Levi-Civita symbol with a symmetric tensor is zero. 
\end{proof}
\paragraph{Correct choice of initial data} 
In order to guarantee that divergence and curl involutions hold exactly at the discrete level, please note that the discrete solution in the DG space $\mathcal{U}_h^N$ \textit{cannot} be simply initialized via classical element-local $L^2$ projection. Instead,  the initial condition for those state variables in the DG space $\mathcal{U}_h^N$ that have to satisfy involution constraints must necessarily be also properly computed as the discrete gradient or discrete curl of the corresponding discrete scalar or vector potentials in the continuous finite element space $\mathcal{W}_h^{N+1}$. As an example, a discretely divergence-free magnetic field 
$\B_h \in \Uh$ must be initialized as 
\begin{equation}
    \B_h(\x,0) = \nabla \times \tilde{\A}_h\left(\x,0\right), 
\end{equation}
with a discrete vector potential $\tilde{\A}_h(\x,0) \in \Wh$. 
For the degrees of freedom of $\Bh(\x,0)$ at the initial time we simply get  
\begin{equation}
    \B_c(0) = \nabla_c^q \times \A_q(0). 
\end{equation}
Likewise, to initialize for example a discretely curl-free velocity field $\v_h \in \Uh$, one must compute it as the gradient of a discrete scalar potential $\tilde{Z}_h \in \Wh$, i.e. 
\begin{equation}
    \v_h(\x,0) = \nabla \tilde{Z}_h\left(\x,0\right), 
\end{equation}
or, in terms of the degrees of freedom
\begin{equation}
    \v_c(0) = \nabla_c^q Z_q(0). 
\end{equation}

\subsection{Exact pointwise local conservation and preservation of involutions.} 
Let us again consider the scheme \eqref{eqn.cgdg} that we rewrite as for all elements $T_k$
\begin{equation}\label{eq:remi1}\int \limits_{T_k}\phi_i\Big( \partial_t\mathbf{u}_h+\nabla\cdot\tilde{\mathbf{f}}_h\Big ) \dx=0. 
\end{equation}
Here $\mathbf{u}_h\in \Uh$ and $\tilde{\mathbf{f}}_h\in \Wh$. Clearly, $\nabla\cdot \tilde{\mathbf{f}}_h\in \Uh$,  so that  we have, \textbf{strongly} on each $T_k$,
\begin{equation}\label{eqn:remi2}
\partial_t\mathbf{u}_kh+\nabla\cdot\tilde{\mathbf{f}}_h=0.
\end{equation}
This shows pointwise conservation in each element. Local conservation is guaranteed because for each element, the characteristic function of $T_k$ belongs to $\Uh$, i.e.
\begin{equation*}
    \int \limits_{T_k} \partial_t\mathbf{u}_h\dx+\int \limits_{T_k}\nabla\cdot\tilde{\mathbf{f}}_h \dx=0,
\end{equation*}
and using again the continuity of $\tilde{\mathbf{f}}_h$ across the boundary of $T_i$, we get
\begin{equation}
\label{eqn:remi:3}
    \dfrac{\dd}{\dt}\int \limits_{T_k}\mathbf{u}_h\dx+\int \limits_{\partial T_k}\tilde{\mathbf{f}}_h\cdot \mathbf{n} \dgamma=0.
\end{equation}
We now show that 
\begin{proposition}\label{prop: remi}
    For any volume $V$ included in $\Omega$, we have the conservation relation:
\begin{equation}\label{eq:generalConservation}
\dfrac{\dd}{\dt}\int \limits_V \mathbf{u}_h \dx+\int \limits_{\partial V} \tilde{\mathbf{f}}_h\cdot \mathbf{n}\dgamma.
\end{equation}
\end{proposition}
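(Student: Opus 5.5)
The plan is to integrate the strong, elementwise identity \eqref{eqn:remi2} over the arbitrary volume $V$, split according to the mesh, and then glue the pieces back together using the global continuity of $\fh$. Note that the statement is missing ``$=0$''; we prove that the left-hand side vanishes. Since $\nabla\cdot\fh\in\Uh$ and the scheme \eqref{eq:remi1} tests against all of $\Uh$ on each $T_k$, we have $\partial_t\uh+\nabla\cdot\fh=0$ pointwise in the interior of every element. We take $V$ to be a Lipschitz (or at least piecewise smooth) subdomain of $\Omega$, so that the divergence theorem applies on each piece $V\cap T_k$.

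\textbf{Step 1: elementwise integration.} Write
$$\int_V \partial_t\uh\dx=\sum_k\int_{V\cap T_k}\partial_t\uh\dx=-\sum_k\int_{V\cap T_k}\nabla\cdot\fh\dx .$$
The time derivative can be pulled out of the integral because $V$ is fixed and $\uh$ is polynomial in space with smooth time-dependent coefficients.

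\textbf{Step 2: divergence theorem on each piece.} On $V\cap T_k$ the field $\fh$ is a polynomial, hence smooth. Applying the Gauss theorem gives
$$\int_{V\cap T_k}\nabla\cdot\fh\dx=\int_{\partial(V\cap T_k)}\fh\cdot\n\dgamma .$$
The boundary $\partial(V\cap T_k)$ splits into two parts: $\partial V\cap T_k$, and the interior interfaces $V\cap\partial T_k$.

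\textbf{Step 3: cancellation of internal interfaces.} This is the key step. Each interior face portion $V\cap(T_b\cap T_c)$ is counted twice, once from $T_b$ and once from $T_c$, with opposite normals. Because $\fh\in\Wh$ is globally continuous, the traces of $\fh$ from the two sides coincide pointwise on the face, so the two contributions cancel exactly. This is precisely where the continuous flux field matters: with a standard DG flux $\f(\uh)$ the traces would differ. Faces on $\partial\Omega$ belong to $\partial V$ only where $V$ touches $\partial\Omega$, and those contributions correctly remain as part of $\partial V$. After cancellation, summing over $k$ leaves only $\int_{\partial V}\fh\cdot\n\dgamma$, which is the claim.

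The main obstacle is making Step 3 rigorous for a general $V$: $\partial V$ may intersect faces non-transversally, or contain pieces of element faces. To avoid tracking individual pieces, I would recast Steps 2 and 3 globally. Since $\fh$ is continuous and piecewise polynomial, it lies in $W^{1,\infty}(\Omega)$, so $\fh\in H(\mathrm{div};\Omega)$ with distributional divergence equal to the piecewise divergence; no singular part lives on $\mathcal{E}_h$ because the jump $[\fh]\cdot\n$ vanishes. Then the divergence theorem for $W^{1,\infty}$ fields on a Lipschitz domain $V$ gives the result directly. Alternatively, one can use the integration-by-parts identity from the derivation of the dual nabla operator with a smooth cutoff approximating $\mathbf{1}_V$, and pass to the limit.
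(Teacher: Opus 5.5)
Your proposal is correct and follows essentially the same route as the paper: you split $V$ into the pieces $V\cap T_k$, use the strong elementwise identity $\partial_t \uh + \nabla\cdot\fh = 0$, apply the divergence theorem on each piece, and let the interior-face contributions cancel by the global continuity of $\fh$ (and you rightly read the statement as ending in $=0$). Your extra global argument is a useful tightening of the paper's informal ``second skeleton'' bookkeeping, especially when $\partial V$ contains pieces of element faces: you view $\fh \in W^{1,\infty}(\Omega)$, so its distributional divergence has no singular part on $\mathcal{E}_h$, and then apply the divergence theorem directly on a Lipschitz $V$.
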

This relation is more general than the usual one because the volume $V$ can be arbitrary.
\begin{proof}
Since $\tilde{\mathbf{f}}_h\in \Wh\subset H^1(\Omega)$, $\nabla\cdot \tilde{\mathbf{f}}_h\in L^2(\Omega)$ and writing
$$V=\cup_k \big ( V\cap T_k),$$
we get from \eqref{eqn:remi2}
$$\dfrac{\mathtt{d}}{\dt}\int \limits_V \mathbf{u}_h \dx+\sum_{T_k}\int \limits_{V\cap T_k} \nabla\cdot \tilde{\mathbf{f}}_h\dx=0.$$
Then, $$\int \limits_{V\cap T_k} \nabla\cdot \tilde{\mathbf{f}}_h\dx=\int \limits_{\partial\big (V\cap T_k\big )}\tilde{\mathbf{f}}_h\cdot \mathbf{n}\dgamma.$$
The boundary $\partial\big (V\cap T_k\big )$ has two parts: one part that is contained in the skeleton of the mesh, and a part that is made of edges or two dimensional flat pieces that are contained in the interior of the triangles. We will call this second part in the following the second skeleton. Because of the continuity of the reconstructed flux $\tilde{\mathbf{f}}_h$, the contribution on the skeleton vanishes. The union of the pieces contained in the second skeleton is simply $\partial V$, so that in the end
$$\sum_{T_k}\int \limits_{V\cap T_k} \nabla\cdot \tilde{\mathbf{f}}_h\dx=\int \limits_{\partial V} \mathbf{f}^h\cdot \mathbf{n}\dgamma.$$
This ends the proof of Proposition \eqref{prop: remi}.
    \end{proof}

    Thanks to \eqref{eq:remi1}, we also respect involutions everywhere. Assume that the system \eqref{eqn.pde} satisfies the involution
    $$\dfrac{\dd}{\dt}\text{curl} L(\mathbf{q})=0$$ where $L$ is a linear operator. This means that \eqref{eqn.pde} satisfies
    $$\text{curl}\big (L(\nabla \cdot \mathbf{f})\big )=0$$ or in other words that
    $$L(\nabla \cdot \mathbf{f})=\nabla G(\mathbf{q})$$ for some functional $G$.
    Similarly, we could have involutions of the kind
    $$\dfrac{\dd}{\dt}\text{div} L(\mathbf{q})=0$$
    i.e.
    $$\text{div}\big (L(\nabla \cdot \mathbf{f})\big )=0$$ that is that
    $$L(\nabla \cdot \mathbf{f})=\text{ curl }G(\mathbf{q})$$ for some $G$.
    Typical examples are:
    \begin{itemize}
        \item For the acoustic equations, where $\mathbf{q}=(\mathbf{v},p)^T$, the operator $L$ is
        $L(\mathbf{q})=\mathbf{v}$ and then $G(\mathbf{q})=\nabla p$
        \item For the Maxwell equations $\mathbf{q}=\big ( \mathbf{E}, \mathbf{B}\big )^T$, the we may consider
        $$L\big (\mathbf{q}\big )=\mathbf{E} \text{ or }L\big (\mathbf{q}\big )=\mathbf{B}$$ so that
        $G(\mathbf{q})=\text{curl }\mathbf{E}$ or $G(\mathbf{q})=-\text{curl }\mathbf{B}$
    \end{itemize}
    What matters is that $L\big ( \nabla\cdot \tilde{\mathbf{f}}_h\big )=\nabla\cdot \tilde{G}_h$ in the first case and that $L\big ( \nabla\cdot \tilde{\mathbf{f}}_h\big )=\text{curl } \tilde{G}_h$ in the second case. This  is true for the two examples given previously. Using \eqref{eq:remi1}, we see that pointwise, we will have
    $$\dfrac{\dd}{\dt} \text{curl }L\big (\mathbf{q}_h\big )=0 \qquad \text{ or } \qquad \dfrac{\dd}{\dt}\text{div} L\big (\mathbf{q}_h\big )=0.$$
        The question now is what happens  across the edges and faces.
    \begin{itemize}
        \item For the curl operator. We need that whatever $\bm \tau$ normal to the face $f$, $L\big (\mathbf{q}\big )\cdot \bm \tau$ is continuous across $f$. More precisely, we show that for any polynomial $p_N$ of degree $N$,
        $$\dfrac{d}{dt}\int \limits_f p_N\big [L\big (\mathbf{q}\big )\cdot \bm \tau\big ]\dgamma=0.$$
        \begin{proof}
    The face $f$ is the intersection of two elements, $T$ and $T'$. Since from $T$, we have
    \begin{equation*}
        \begin{split}
    \dfrac{d}{dt}\int \limits_{f\cap T}p_N L\big (\mathbf{q}_h\big )\cdot \bm \tau\dgamma&=\int \limits_{f\cap T}p_N \big (\dfrac{d}{dt}
    L\big (\mathbf{q}_h\big )\big )\cdot \bm \tau\dgamma=-\int \limits_{f\cap T}p_N\nabla \tilde{G}_h\cdot\bm\tau \\&=
    -\int \limits_{f\cap T}p_N\partial_{\bm \tau}{\tilde{G}_h}
    \end{split}
    \end{equation*}Seen from $T'$, we have
    $$\dfrac{d}{dt}\int_{f\cap T'}p_N L\big (\mathbf{q}_h\big )\cdot \bm \tau\dgamma=
    -\int_{f\cap T'}p_N\partial_{\bm \tau}{\tilde{G}_h}$$
    Since $\partial_{\bm \tau}{\tilde{G}_h}$ is continuous across $f$, we then see that
    $$0=\dfrac{d}{dt}\int \limits_f p_N\big [L\big (\mathbf{q}\big )\cdot \bm \tau\big ]\dgamma=
    \int \limits_f p_N\dfrac{\dd}{\dt}\big [L\big (\mathbf{q}\big )\cdot \bm \tau\big ]\dgamma$$
    Because  $\big [L\big (\mathbf{q}\big )\cdot \bm \tau\big ]$ is a polynomial of degree $N$, this means that pointwise, we have
    $$\dfrac{d}{dt}\big [L\big (\mathbf{q}\big )\cdot \bm \tau\big ]=0$$
        \end{proof}
        \item For the divergence operator, we can also show that
        $$\dfrac{d}{dt}\int \limits_f p_N\big [L\big (\mathbf{q}\big )\cdot \mathbf{n}\big ]\dgamma=0.$$
        The proof is similar and comes from
        $$p_N \text{curl }\tilde{\mathbf{G}}\cdot \mathbf{n}=p_N\text{ div }\big (\tilde{\mathbf{G}}\times \mathbf{n}\big )$$
        Then using coordinates in the orthogonal frame $\{\mathbf{n}, \bm\tau_1, \bm\tau_2\}$ where $\bm\tau_1, \bm\tau_2$ is a basis of the face, we first write $\mathbf{G}=(G_n, G_{\tau_1}, G_{\tau_2})$ and see that
        $$\mathbf{G}\times \mathbf{n}=\begin{pmatrix} 0\\ -G_{\tau_2}\\ G_{\tau_1}\end{pmatrix}$$
        and then 
        $$\text{ div }\big ( \mathbf{G}\times \mathbf{n}\big )=-\partial_{\tau_1}G_{\tau_2}+\partial_{\tau_2}G_{\tau_1}$$
        and this quantity, on the face, only depend on the $\tau_1$ and $\tau_2$ coordinate since the $\mathbf{n}$ coordinate is constant. This shows that $\big [ L(\mathbf{q} \cdot \mathbf{n})\big ]$ is constant because it is a polynomial of degree $N$.
        \end{itemize}

        We have shown the following 
        \begin{proposition}
            Assume that the system $\partial_t\mathbf{q}+\nabla\cdot \mathbf{f}=0$ admit the involution
            $$\dfrac{\mathtt{d}}{\mathtt{d}t}\text{curl} L(\mathbf{q}=0 \text{ or } \dfrac{\mathtt{d}}{\mathtt{d}t}\text{div} L(\mathbf{q}=0$$
        i.e. $L(\nabla\cdot \mathbf{f})=\nabla\cdot \mathbf{G}$ or $L(\nabla\cdot \mathbf{f})=\text{curl }\mathbf{G}$ for a suitable $\mathbf{G}$. If we have $L(\nabla\cdot \widetilde{\mathbf{f}})=\nabla\cdot \widetilde{\mathbf{G}}$ or $L(\nabla\cdot \widetilde{\mathbf{f}})=\text{curl }\widetilde{\mathbf{G}}$, then the scheme respects pointwise  the involution.
        \end{proposition}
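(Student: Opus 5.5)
The plan is to split the statement into an element-interior part and an inter-element part, and to treat the curl case and the divergence case in parallel. The whole argument rests on the strong form \eqref{eqn:remi2}: since $\nabla\cdot\tilde{\mathbf{f}}_h\in\Uh$, the scheme \eqref{eq:remi1} gives $\partial_t\mathbf{u}_h=-\nabla\cdot\tilde{\mathbf{f}}_h$ pointwise in each $T_k$. Applying the linear operator $L$ and the hypothesis yields
\[
\partial_t L(\mathbf{u}_h)=-\nabla\tilde{G}_h \quad\text{or}\quad \partial_t L(\mathbf{u}_h)=-\text{curl }\tilde{\mathbf{G}}_h
\]
inside every element, with $\tilde{G}_h$ (respectively $\tilde{\mathbf{G}}_h$) in the continuous space $\Wh$. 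For the acoustic and Maxwell examples these potentials are simply components of the discrete flux field. Taking curl, respectively divergence, of these identities inside each $T_k$ and using the classical identities $\text{curl}\,\nabla=0$ and $\text{div}\,\text{curl}=0$ for polynomials gives $\frac{d}{dt}\text{curl}\,L(\mathbf{u}_h)=0$, respectively $\frac{d}{dt}\text{div}\,L(\mathbf{u}_h)=0$, strongly on each element.

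Next I would show that nothing is produced on the skeleton. For a distributional curl (resp. div) of the piecewise polynomial field $L(\mathbf{u}_h)$, the only additional contributions are the jumps of the tangential components (resp. of the normal component) across each face $f=T\cap T'$. For the curl case, restrict the evolution identity to $f$ from both sides: $\partial_t L(\mathbf{u}_h)\cdot\bm\tau=-\partial_{\bm\tau}\tilde{G}_h$. By Proposition~\ref{prop.td}, the tangential derivative of $\tilde{G}_h\in\Wh$ is single-valued on $f$, so the time derivative of the tangential jump vanishes. For the divergence case, apply Proposition~\ref{prop.nd}: the normal component of $\text{curl }\tilde{\mathbf{G}}_h$ is continuous across $f$, so the time derivative of the normal jump of $L(\mathbf{u}_h)$ vanishes. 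Alternatively, one can use the face computation $\text{curl }\tilde{\mathbf{G}}\cdot\mathbf{n}=-\partial_{\tau_1}G_{\tau_2}+\partial_{\tau_2}G_{\tau_1}$, which involves only in-face derivatives of continuous components.

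The argument can also be run weakly, testing against any $p_N$ on $f$. Since the traces are polynomials of degree $N$, the weak statement upgrades to the pointwise one.

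Combining the interior and face parts, the distributional $\text{curl}\,L(\mathbf{u}_h)$ (resp. $\text{div}\,L(\mathbf{u}_h)$) is constant in time on all of $\Omega$. This is the involution in the pointwise-everywhere sense. It is exactly zero whenever the initial data are chosen as described in the paragraph on the correct choice of initial data.

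I expect the main obstacle to be the face step. The key difficulty is ensuring that the hypothesis $L(\nabla\cdot\tilde{\mathbf{f}})=\nabla\tilde{G}$ holds with $\tilde{G}$ genuinely in the globally continuous space $\Wh$, and not merely elementwise. If that fails, the tangential/normal continuity from Propositions~\ref{prop.td}--\ref{prop.nd} cannot be invoked. I would therefore state explicitly that $\tilde{G}_h$ is built from the nodal values of $\tilde{\mathbf{f}}_h$ by a linear map, hence lies in $\Wh$, and verify this directly for the two examples.
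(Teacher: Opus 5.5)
Your proposal is correct and follows essentially the same route as the paper. The strong elementwise form \eqref{eqn:remi2} handles the element interiors. On each face, the time derivative of the tangential (curl case) or normal (div case) jump of $L(\mathbf{u}_h)$ vanishes, because only in-face derivatives of the globally continuous $\tilde{G}_h$ enter. Two differences are refinements rather than a different approach: you use Proposition~\ref{prop.nd} in place of the paper's explicit $\text{div}(\tilde{\mathbf{G}}\times\mathbf{n})$ face computation, and you state explicitly that $\tilde{G}_h$ must lie in $\Wh$, which the paper only uses implicitly.
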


\subsection{Relation with the continuous finite element method}
\label{sec:equivalence}
We assume that $\PW$ and $\PU$ are $L^2$ projections on $\Wh$ and $\Uh$ respectively. With this assumption, the reconstructed variable $\wh$ solves a standard continuous finite element discretization.
\begin{lemma}
 \label{lemma:reduction}
 Let $\u_h$ be the solution of \eqref{eqn.cgdg} and define $\wh \doteq \PW \u_h$. Then $\wh$ is the solution to the standard Continuous Galerkin problem: find $\wh\in \Wh$ such that
 \begin{equation}
\label{eq:FEM}
\int \limits_\Omega\partial_t \wh\psi_i\,d\x-  \int \limits_\Omega \fh \cdot \nabla \psi_i  d\x= 0, \qquad \forall \psi_i \in \Wh, 
 \end{equation}
 subject to the initial condition $\wh(0) = \PW \u_h(0).$
\end{lemma}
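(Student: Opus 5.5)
The plan is to test the DG scheme \eqref{eqn.cgdg} with functions from $\Wh$, and then use the definition of $\PW$ as the $L^2$ projection to rewrite the time-derivative term in terms of $\wh$.

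First we sum \eqref{eqn.cgdg} over all elements $T_k$. This gives $\int_\Omega \v_h\,(\partial_t \uh + \nabla\cdot\fh)\,d\x = 0$ for every $\v_h \in \Uh$. The crucial observation is the inclusion $\Wh \subset \Uh$: every globally continuous piecewise polynomial of degree $N+1$ is, in particular, a discontinuous piecewise polynomial. As literally stated, however, $\Uh$ contains degree $N$ only, so this inclusion fails.

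I would therefore not test with $\psi_i$ directly. Instead I use the fact that $\partial_t\uh + \nabla\cdot\fh$ lies in $\Uh$: indeed $\nabla\cdot\fh \in \Uh$ because derivatives of $\Wh$ functions are represented exactly. Since the Galerkin condition holds against all of $\Uh$, this element of $\Uh$ vanishes identically, which is the strong relation \eqref{eqn:remi2}. Hence $\partial_t\uh = -\nabla\cdot\fh$ pointwise in $\Omega$, and this identity may be tested against any function at all, in particular against $\psi_i \in \Wh$. Doing so yields
\[
\int_\Omega \psi_i\,\partial_t\uh\,d\x + \int_\Omega \psi_i\,\nabla\cdot\fh\,d\x = 0 .
\]

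Next I treat the two terms separately. For the first term, $\PW$ is time independent and linear, so $\partial_t\wh = \PW\partial_t\uh$. The defining relation \eqref{eqn.L2proj} then gives $\int_\Omega \psi_i\,\partial_t\uh\,d\x = \int_\Omega \psi_i\,\partial_t\wh\,d\x$. For the second term, $\fh \in \Wh \subset H^1(\Omega)$ and $\psi_i \in H^1(\Omega)$, so integration by parts on the whole domain is legitimate with no interior jump terms. It gives $\int_\Omega \psi_i\nabla\cdot\fh\,d\x = -\int_\Omega \fh\cdot\nabla\psi_i\,d\x$ plus a boundary term on $\partial\Omega$. As in the construction of the dual nabla operator, this boundary term is dropped under the assumed suitable (e.g. periodic) boundary conditions. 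Combining the two terms gives \eqref{eq:FEM}. The initial condition follows trivially from the definition $\wh(0)=\PW\uh(0)$.

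The main point needing care is the step from the weak DG equation to testing against $\psi_i \notin \Uh$. This step rests entirely on $\nabla\cdot\fh$ lying in $\Uh$, so that the elementwise Galerkin equation is equivalent to the strong pointwise equation. Should one prefer to avoid the strong form, an alternative is to insert $\PU\psi_i$ and use that $\partial_t\uh+\nabla\cdot\fh \in \Uh$ is $L^2$-orthogonal to $\psi_i-\PU\psi_i$; this yields the same identity.

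A secondary point is the boundary term, which must vanish. I would state explicitly that periodic or compactly supported settings are assumed, consistent with the earlier treatment of the dual operator. Finally, $\fh$ is a fixed function of $\uh$ (nodal interpolation), not of $\wh$. The lemma therefore holds with that same $\fh$, and no consistency between $\fh$ and $\wh$ is required.
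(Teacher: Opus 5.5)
Your proof is correct and follows essentially the paper's argument. The paper tests \eqref{eqn.cgdg} with $\PU\psi_i$ and uses the self-adjointness of $\PU$ together with $\partial_t\uh,\ \nabla\cdot\fh \in \Uh$ (exactly your stated alternative, and equivalent to your route through the strong form \eqref{eqn:remi2}), followed by the same use of the $L^2$-projection property of $\PW$ and integration by parts; your explicit remark that the boundary term on $\partial\Omega$ must be dropped under periodic or vanishing boundary conditions makes precise a point the paper leaves implicit.
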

\begin{proof}
 We chose the test function $\phi_i = \PU \psi_i$ in \eqref{eqn.cgdg} with $\psi_i \in \Wh$. Since $\nabla \cdot \fh\in \Uh$ and $\PU$ is self-adjoint with respect to the $L^2$ product, we obtain
\begin{align*}
0 &=\int \limits_\Omega\partial_t \u_h\cdot \PU  \psi_i\,d\x + \int \limits_\Omega \nabla \cdot \fh \PU \psi_i \,d\x\\
&= \int \limits_\Omega \partial_t (\PU \u_h) \psi_i \,d\x + \int \limits_\Omega (\PU \nabla \cdot \fh) \psi_i\,d\x \\
&= \int \limits_\Omega \partial_t  \u_h \psi_i \,d\x + \int \limits_\Omega  \nabla \cdot \fh \psi_i\,d\x \\
&= \int \limits_\Omega \partial_t (\PW \u_h) \psi_i \,d\x + \int \limits_\Omega \nabla \cdot \fh \psi_i \,d\x.
 \end{align*}
The proof is concluded with the definition $\wh \doteq \PW\u_h$ and integration by parts.
\end{proof}
\begin{remark}
Even though our method is equivalent to a standard continuous Galerkin method, what makes it unique and novel is the initialization $\wh(0) = \PW \uh(0)$, which guarantees both exact pointwise conservation and involution preservation for the variable $\uh$.
\end{remark}
\subsection{Quadratic energy conservation, nonlinear stability and error estimates for linear symmetric hyperbolic systems} 

In this section, we assume that $\f(\q) = \A\q$ with $\A = A_{ijk}$ is a constant 
tensor of rank 3, \textbf{symmetric} in the first two components, so that \eqref{eqn.pde} can be written in index notation as
\begin{equation}
\label{eq:quasi-linear}
    \partial_t q_i + A_{ijk}\partial_kq_j = 0.
\end{equation}
Note that the symmetry implies the integration-by-parts formula
\begin{equation}
    \int \limits_{\Omega}\nabla\cdot (\A(\q \otimes \p))\,d\x = \int \limits_{\Omega}(\A:\nabla \q)\p\,d \x + \int \limits_{\Omega}\q\cdot(\A:\nabla \p)\,d\x,
    \label{eq:int-by-parts}
\end{equation}
for smooth vector fields $\q$ and $\p$. For the ease of notation we introduce the $L^2$ norm of a vector field $\q$ as 
\begin{equation*}
    \lVert \q\rVert \doteq \sqrt{ \int \limits_{\Omega} \lvert \q\rvert^2\,d\x}.
\end{equation*}
Then, we readily obtain conservation of energy. \begin{lemma}[Continuous energy conservation]
\label{lem:cont_energy_cons}
    Let $\q$ be the exact solution of \eqref{eq:int-by-parts}. If $\q$ is sufficiently smooth and vanishes on the boundary of $\Omega$, then it satisfies the energy conservation
    \begin{equation}
        \frac{1}{2}\partial_t \norm{\q}^2 = 0. 
    \end{equation}
\end{lemma}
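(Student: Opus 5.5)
The plan is the standard energy argument: take the $L^2$ inner product of the equation \eqref{eq:quasi-linear} with $\q$ itself and use the symmetry-based integration-by-parts formula \eqref{eq:int-by-parts} to show that the flux contribution is a pure divergence. We read the statement as referring to the exact solution of \eqref{eq:quasi-linear}; the reference to \eqref{eq:int-by-parts} is evidently a typo.

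\emph{Step 1.} Multiply \eqref{eq:quasi-linear} by $q_i$, sum over $i$, and integrate over $\Omega$. Smoothness allows us to exchange the time derivative and the integral. We obtain
\begin{equation*}
\frac{1}{2}\partial_t \norm{\q}^2 + \int \limits_\Omega q_i A_{ijk}\partial_k q_j \, d\x = 0 .
\end{equation*}

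\emph{Step 2.} Handle the flux term with \eqref{eq:int-by-parts}, taking $\p = \q$. The formula gives
\begin{equation*}
\int \limits_\Omega \partial_k (A_{ijk} q_j q_i)\, d\x = 2\int \limits_\Omega q_i A_{ijk}\partial_k q_j \, d\x .
\end{equation*}
This follows directly from the symmetry $A_{ijk}=A_{jik}$: expanding $\partial_k(A_{ijk}q_jq_i)$ yields $A_{ijk}q_i\partial_k q_j + A_{ijk}q_j\partial_k q_i$, and relabeling $i\leftrightarrow j$ in the second term makes the two terms equal. Hence the flux term equals $\frac12\int_\Omega \nabla\cdot(\A(\q\otimes\q))\,d\x$.

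\emph{Step 3.} Apply the divergence theorem, which turns this into $\frac12\int_{\partial\Omega} A_{ijk}q_iq_j n_k\,dS$. This boundary integral vanishes because $\q=0$ on $\partial\Omega$, so $\frac12\partial_t\norm{\q}^2=0$.

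There is no real obstacle here. The only point requiring care is the index bookkeeping in Step 2, namely checking that symmetry in the first two indices of $\A$ is exactly what makes the quadratic form $q_iA_{ijk}\partial_kq_j$ a half-divergence. The smoothness assumption is what justifies the pointwise product rule and the use of Gauss's theorem.
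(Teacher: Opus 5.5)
Your proof is correct and matches the paper's argument: test \eqref{eq:quasi-linear} with $\q$, use \eqref{eq:int-by-parts} with $\p=\q$ (i.e.\ the symmetry $A_{ijk}=A_{jik}$) to turn the flux term into $\frac12\int_\Omega \nabla\cdot(\A(\q\otimes\q))\,d\x$, and remove it via the divergence theorem and $\q|_{\partial\Omega}=0$. You are also right that the statement's reference to \eqref{eq:int-by-parts} should read \eqref{eq:quasi-linear}.
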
 
\begin{proof}
    Multiply \eqref{eq:quasi-linear} by $\q$ and integrate on $\Omega$. Then, identity \eqref{eq:int-by-parts} gives
    \begin{equation*}
    \frac{1}{2} \left( \norm{\q} + \int_{\Omega}\nabla \cdot (\A(\q \otimes \q))\,d\x \right) = 0.
    \end{equation*}
The proof is concluded applying the divergence theorem.
\end{proof}
The numerical method becomes: find $\u_h = \u_h(\x, t)\in \Uh$, satisfying
\begin{equation}
\label{eq:weak_form}
    \int \limits_{\Omega} \partial_t \u_h\cdot \dq_h\,d \x + \int \limits_{\Omega} (\A :\nabla \wh) \cdot \dq_h\,d\x = 0, \qquad \forall \dq_h\in \Uh.
\end{equation}
As in Section \ref{sec:equivalence}, we assume that $\PW$ and $\PU$ are $L^2$ projections on $\Wh$ and $\Uh$ respectively. Then, Lemma \ref{lemma:reduction} implies that $\wh = \PW \uh$ satisfies 
\begin{equation}
    \int \limits_\Omega\partial_t \wh\cdot\dqt\,d\x+  \int \limits_\Omega(\A : \nabla \wh)\cdot\dqt\, d\x= 0, \qquad \forall \dqt \in \Wh, 
    \label{eq:FEM_linear}
 \end{equation}
 subject to the initial condition $\wh(0) = \PW \u_h(0).$ As a consequence, we readily obtain energy conservation and error estimates for $\wh$. 
\begin{lemma}[Discrete energy conservation]
Assume periodic or vanishing boundary conditions. Let $\uh$ be the solution of \eqref{eq:weak_form}. Then $\wh = \PW \uh$ satisfies the energy conservation
\begin{equation}
 \label{eq:energy_cons}
 \frac{1}{2} \partial_t \lVert \wh \rVert^2 = 0.
\end{equation}
\end{lemma}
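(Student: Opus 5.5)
The plan is to mimic the proof of Lemma \ref{lem:cont_energy_cons} at the discrete level, working with the continuous Galerkin formulation \eqref{eq:FEM_linear} satisfied by $\wh=\PW\uh$ rather than with \eqref{eqn.cgdg} directly. By Lemma \ref{lemma:reduction}, $\wh$ solves \eqref{eq:FEM_linear} for all test functions $\dqt\in\Wh$. Because $\wh(\cdot,t)$ itself lies in $\Wh$ (and so does $\partial_t\wh$, since the basis functions $\psi_j$ are time independent), we may take $\dqt=\wh$ as test function. The first term then becomes $\int_\Omega \partial_t\wh\cdot\wh\,d\x=\frac12\partial_t\norm{\wh}^2$.

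The remaining step is to show that the flux term $\int_\Omega(\A:\nabla\wh)\cdot\wh\,d\x$ vanishes. Here we apply the integration-by-parts identity \eqref{eq:int-by-parts} with $\q=\p=\wh$. This gives
\[
\int_\Omega \nabla\cdot(\A(\wh\otimes\wh))\,d\x = 2\int_\Omega(\A:\nabla\wh)\cdot\wh\,d\x,
\]
so the flux term equals $\frac12\int_\Omega\nabla\cdot(\A(\wh\otimes\wh))\,d\x$. The divergence theorem turns this into the boundary integral $\frac12\int_{\partial\Omega}(\A(\wh\otimes\wh))\cdot\n\,dS$. This integral is zero under vanishing boundary conditions, and under periodic boundary conditions the contributions from opposite faces cancel.

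The main obstacle is justifying \eqref{eq:int-by-parts} for $\wh$. That identity was stated only for smooth fields, whereas $\wh$ is merely piecewise polynomial and globally continuous, i.e.\ in $H^1(\Omega)$ but not $C^1$. We handle this in one of two ways.

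\emph{Elementwise argument.} Apply \eqref{eq:int-by-parts} on each simplex $T_k$, where $\wh$ is a polynomial. Summing over elements produces interior face terms of the form $\int_{\mathcal{E}_h}\big(\A(\wh\otimes\wh)\big)^{\pm}\n\,dS$. These cancel because $\wh$ is continuous across faces, so the traces from both sides coincide; this is the same mechanism as in the proof of Proposition \ref{prop: remi}.

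\emph{Density argument.} Note that both sides of \eqref{eq:int-by-parts} are continuous in the $H^1$ topology, and conclude by density.

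Only the symmetry $A_{ijk}=A_{jik}$ is used, through the pointwise identity $\partial_k(A_{ijk}w_iw_j)=2A_{ijk}w_j\partial_kw_i$ inside each element. With this justification in place, \eqref{eq:energy_cons} follows.
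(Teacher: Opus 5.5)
Your proof is correct and follows the same route as the paper, which takes $\dqt=\wh$ in \eqref{eq:FEM_linear} and then repeats the argument of Lemma \ref{lem:cont_energy_cons}, i.e.\ \eqref{eq:int-by-parts} followed by the divergence theorem. Your elementwise (or density) justification of \eqref{eq:int-by-parts} for the globally continuous, piecewise polynomial field $\wh$ supplies a detail that the paper leaves implicit.
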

\begin{proof}
Take $\dqt = \wh$ in \eqref{eq:FEM_linear}. Then, the proof proceeds as the one of Lemma \ref{lem:cont_energy_cons}.
\end{proof}
We continue now with the error analysis of the scheme, assuming that the mesh is quasi-uniform. Let $\mathbf{H}^s(\Omega)$ be the standard vector-valued Sobolev space of $\mathbf{L}^2(\Omega)$ functions with derivatives up to order $s$ in $\mathbf{L}^2(\Omega)$, equipped with norm and seminorm $\lVert \cdot \rVert_{\H^s(\Omega)}$ and $\seminorm{\cdot}_{\H^s(\Omega)}$, see e.g. \cite[Proposition 2.9]{EG1}.
\begin{lemma}
 \label{lemma:cg_error}
 Let $\q \in L^\infty(0,T; \H^{N+2}(\Omega))$ be the exact solution of \eqref{eqn.pde} with $\mathbf{f}(\mathbf{q})=\mathbf{A}\mathbf{q}$ and $\mathbf{A}$ constant. 
 Assume that the initial data satisfies $\lVert \u_h(0) - \q(0) \rVert \lesssim h^{N+ 1}\seminorm{\q(0)}_{\H^{N+ 1}(\Omega)}$ and that both $\wh$ and $\q$ vanish on the boundary.
 Then, the solution $\wh$ to \eqref{eq:FEM} satisfies the error estimate
 \begin{equation*}
\norm{\wh(t) - \q(t)} = \mathcal{O}(h^{N+1}).
 \end{equation*}
\end{lemma}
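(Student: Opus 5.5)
The plan is the classical energy-method error analysis for a continuous Galerkin discretization of a linear symmetric hyperbolic system. It relies on Lemma~\ref{lemma:reduction}, which says $\wh=\PW\uh$ solves the CG problem \eqref{eq:FEM_linear}. We split the error as
\[
\wh-\q=(\wh-\PW\q)+(\PW\q-\q)=:\boldsymbol{\theta}_h+\boldsymbol{\rho}.
\]
Here $\PW$ is the $L^2$ projection onto $\Wh$ (degree $N+1$). On a quasi-uniform mesh the standard approximation properties give
\[
\norm{\boldsymbol{\rho}}\lesssim h^{N+2}\seminorm{\q}_{\H^{N+2}(\Omega)},\qquad
\norm{\nabla\boldsymbol{\rho}}\lesssim h^{N+1}\seminorm{\q}_{\H^{N+2}(\Omega)}.
\]
The gradient bound uses the $H^1$-stability of the $L^2$ projection on quasi-uniform meshes, and is the reason quasi-uniformity is assumed. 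It therefore remains to bound $\boldsymbol{\theta}_h$.

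\textbf{Error equation.} The exact solution satisfies \eqref{eq:quasi-linear} strongly, so it also satisfies \eqref{eq:FEM_linear} for every $\dqt\in\Wh$ (Galerkin orthogonality). Subtracting, we get for all $\dqt\in\Wh$
\[
\int_\Omega \partial_t\boldsymbol{\theta}_h\cdot\dqt\,d\x+\int_\Omega(\A:\nabla\boldsymbol{\theta}_h)\cdot\dqt\,d\x
=-\int_\Omega \partial_t\boldsymbol{\rho}\cdot\dqt\,d\x-\int_\Omega(\A:\nabla\boldsymbol{\rho})\cdot\dqt\,d\x .
\]
Since $\PW$ commutes with $\partial_t$ and $\partial_t\boldsymbol{\rho}$ is $L^2$-orthogonal to $\Wh$, the first term on the right vanishes. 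Now take $\dqt=\boldsymbol{\theta}_h$. By \eqref{eq:int-by-parts} and the boundary conditions, exactly as in Lemma~\ref{lem:cont_energy_cons}, the convective term on the left is zero. This leaves
\[
\tfrac12\partial_t\norm{\boldsymbol{\theta}_h}^2=-\int_\Omega(\A:\nabla\boldsymbol{\rho})\cdot\boldsymbol{\theta}_h\,d\x .
\]

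\textbf{Bounding the consistency term (main obstacle).} A naive Cauchy--Schwarz bound gives $|\A|\,\norm{\nabla\boldsymbol{\rho}}\,\norm{\boldsymbol{\theta}_h}\lesssim h^{N+1}\norm{\boldsymbol{\theta}_h}$. This already suffices for the claimed $\mathcal{O}(h^{N+1})$ rate; the usual suboptimality of CG for hyperbolic problems costs nothing here, because the target rate $N+1$ is one order below the $\Wh$-optimal rate $N+2$. The hard part is justifying $\norm{\nabla\boldsymbol{\rho}}=\mathcal{O}(h^{N+1})$ for the $L^2$ projection, which holds on quasi-uniform meshes. If that step is not accepted, I would instead integrate by parts via \eqref{eq:int-by-parts} to move the derivative onto $\boldsymbol{\theta}_h$, then use an inverse inequality $\norm{\nabla\boldsymbol{\theta}_h}\lesssim h^{-1}\norm{\boldsymbol{\theta}_h}$; with $\norm{\boldsymbol{\rho}}=\mathcal{O}(h^{N+2})$ this again yields $h^{N+1}\norm{\boldsymbol{\theta}_h}$. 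Dividing by $\norm{\boldsymbol{\theta}_h}$, from $\partial_t\norm{\boldsymbol{\theta}_h}\lesssim h^{N+1}\seminorm{\q}_{\H^{N+2}}$ we obtain
\[
\norm{\boldsymbol{\theta}_h(t)}\le\norm{\boldsymbol{\theta}_h(0)}+C\,t\,h^{N+1}\norm{\q}_{L^\infty(0,T;\H^{N+2})}.
\]

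\textbf{Initial error.} We have $\boldsymbol{\theta}_h(0)=\PW(\uh(0)-\q(0))$. Because $\PW$ is $L^2$-stable, $\norm{\boldsymbol{\theta}_h(0)}\le\norm{\uh(0)-\q(0)}\lesssim h^{N+1}\seminorm{\q(0)}_{\H^{N+1}}$ by assumption. The triangle inequality with the bound on $\boldsymbol{\rho}$ then gives $\norm{\wh(t)-\q(t)}=\mathcal{O}(h^{N+1})$.
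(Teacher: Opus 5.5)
Your proposal is correct and follows the paper's proof essentially step for step. It uses the same splitting $\wh-\q=(\wh-\PW\q)+(\PW\q-\q)$, the same error equation tested with $\wh-\PW\q$, removal of the $\partial_t$ term by $L^2$-orthogonality and of the convective term by \eqref{eq:int-by-parts}, the bound $\norm{\nabla(\q-\PW\q)}\lesssim h^{N+1}\seminorm{\q}_{\H^{N+2}(\Omega)}$, and the final triangle inequality. The differences are minor: dividing by $\norm{\boldsymbol{\theta}_h}$ instead of using Young plus Gr\"onwall gives a constant linear in $t$ rather than $e^t$, and bounding $\norm{\PW(\uh(0)-\q(0))}$ directly via $L^2$-stability and the hypothesis is slightly cleaner than the paper's detour through $\PU\q(0)$.
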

\begin{proof}
The proof uses standard FEM techniques, but we report it for completeness.
 Subtracting the weak form satisfied by the exact solution from \eqref{eq:FEM_linear} with test function $\dqt = \boldsymbol{e}_h \doteq \wh - \PW \q \in \Wh$, we obtain the error equation:
 \begin{equation*}
\int \limits_\Omega \partial_t \boldsymbol{e}_h\cdot \boldsymbol{e}_h \,d\x + \int \limits_\Omega (\A : \nabla \boldsymbol{e}_h)\cdot \boldsymbol{e}_h \,d\x = \int \limits_\Omega (\partial_t \q - \partial_t \PW \q)\cdot \boldsymbol{e}_h \,d\x + \int \limits_\Omega (\A: \nabla ( \q - \PW\q))\cdot \boldsymbol{e}_h \,d\x.
 \end{equation*}
 Observing that $\int \limits_\Omega \partial_t \boldsymbol{e}_h\cdot \boldsymbol{e}_h \,d\x = \frac{1}{2}\frac{d}{dt} \norm{\boldsymbol{e}_h}^2$ and that \eqref{eq:int-by-parts} implies $\int \limits_\Omega ( \A: \nabla \boldsymbol{e}_h)\cdot \boldsymbol{e}_h \,d\x = 0$, we analyze the right-hand side.

 The first term on the right-hand side vanishes due to the $L^2$-orthogonality of $\PW$:
 \begin{equation*}
\int \limits_\Omega (\partial_t  \q - \partial_t\PW \q)\cdot \boldsymbol{e}_h \,d\x = \int \limits_\Omega (\partial_t \q - \PW(\partial_t \q))\cdot \boldsymbol{e}_h \,d\x = 0.
 \end{equation*}
 For the second term, we employ the H\"older inequality and standard approximation estimates:
 \begin{align*}
\int \limits_\Omega \A \cdot \nabla (\q - \PW\q)\cdot \boldsymbol{e}_h \,d\x &\leq \seminorm{\A}_\infty \norm{\nabla(\q - \PW\q)} \norm{\boldsymbol{e}_h} \\
&\lesssim h^{N+ 1} \seminorm{\q}_{\H^{N+2}(\Omega)} \norm{\boldsymbol{e}_h}.
 \end{align*}
Substituting this into the error equation and using Young's inequality yields the differential inequality:
 \begin{equation*}
\frac{d}{dt} \norm{\boldsymbol{e}_h(t)}^2 \leq \norm{\boldsymbol{e}_h(t)}^2 + C h^{2(N+1)} \seminorm{\q(t)}_{\H^{N+2}(\Omega)}^2.
 \end{equation*}
We integrate from $0$ to $t$, obtaining
\begin{align*}
   \lVert \boldsymbol{e}_h (t) \rVert^2 &\leq \lVert \boldsymbol{e}_h(0) \rVert^2 + Ch^{2(N+1)}\int_0^t \seminorm{\q(s)}_{\H^{N+2}(\Omega)}^2\,ds +\int_0^t\norm{\boldsymbol{e}_h(s)}^2\,ds \\ 
   & \leq Ch^{2(N+1)} \left( \seminorm{\q(0)}_{\H^{N+1}(\Omega)}^2 + \int_0^t \seminorm{\q(s)}_{\H^{N+2}(\Omega)}^2\,ds \right) + \int_0^t \norm{\boldsymbol{e}_h(s)}^2\,ds \\
   & \eqqcolon C h^{2(N+1)}\alpha(t) + \int_0^t \norm{\boldsymbol{e}(s)}^2\,ds.
\end{align*}
We have used 
\begin{align*}
    \norm{\boldsymbol{e}_h(0)} &= \norm{\wh(0) - \PW\q(0)}\\
    &= \norm{\PW (\uh(0) - \q(0))} \\
    &\leq \norm{\PU \q(0) - \q(0)}\\
    &\lesssim h^{N+1} \seminorm{\q(0)}_{\H^{N+1}(\Omega)}.
\end{align*}
 We can now apply Gr\"onwall's lemma in integral form and obtain
\begin{align}
\norm{\boldsymbol{e}_h(t)}^2 & \lesssim h^{2(N+1)} \left[ \alpha(t) + \int_0^t\alpha(s)e^{t-s}\,ds\right] \notag\\
& \lesssim h^{2(N+1)}\alpha(t)(e^t-1). \label{eq:bound_eh}
\end{align}
 Finally, the triangle inequality $\norm{\wh - \q} \leq \norm{\boldsymbol{e}_h} + \norm{\PW \q - \q}$ gives the result.
\end{proof}

\begin{theorem}
 \label{thm:main_error}
 Let $\q \in L^\infty(0,T; \mathbf{H}^{N+2}(\Omega))$ be the exact solution. 
 Then, the semi-discrete solution $\u_h$ to \eqref{eq:weak_form} with satisfies the error estimate:
 \begin{equation*}
\norm{\u_h(t) - \q(t)} = 
\mathcal{O}(h^N).
 \end{equation*}
\end{theorem}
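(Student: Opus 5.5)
The key observation is that the DG solution is recovered from the continuous finite element solution $\wh = \PW \uh$ by an explicit formula. In strong form the scheme \eqref{eq:weak_form} reads $\partial_t \uh = -\A : \nabla \wh$ pointwise in every element, since $\nabla \wh \in \Uh$. Integrating in time gives
\begin{equation*}
\uh(t) = \uh(0) - \int_0^t \A : \nabla \wh(s)\,ds .
\end{equation*}
The exact solution satisfies the analogous identity $\q(t) = \q(0) - \int_0^t \A : \nabla \q(s)\,ds$. Subtracting the two,
\begin{equation*}
\uh(t) - \q(t) = \big(\uh(0) - \q(0)\big) - \int_0^t \A : \nabla\big(\wh(s) - \q(s)\big)\,ds ,
\end{equation*}
and therefore
\begin{equation*}
\norm{\uh(t)-\q(t)} \le \norm{\uh(0)-\q(0)} + \seminorm{\A}_\infty \int_0^t \norm{\nabla(\wh(s)-\q(s))}\,ds .
\end{equation*}
The first term is $\mathcal{O}(h^{N+1})$ by the hypothesis of Lemma \ref{lemma:cg_error}. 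The whole problem thus reduces to an $H^1$-seminorm estimate for the continuous Galerkin error.

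For that I would split $\wh - \q = \boldsymbol{e}_h + (\PW\q - \q)$ with $\boldsymbol{e}_h = \wh - \PW \q \in \Wh$, exactly as in the proof of Lemma \ref{lemma:cg_error}.
\begin{itemize}
\item The projection part $\norm{\nabla(\PW \q - \q)}$ is $\mathcal{O}(h^{N+1})\seminorm{\q}_{\H^{N+2}(\Omega)}$. This uses $H^1$-stability of the $L^2$ projection on quasi-uniform meshes and was already invoked in that proof.
\item For the discrete part I would use the inverse inequality on the quasi-uniform mesh, $\norm{\nabla \boldsymbol{e}_h} \lesssim h^{-1}\norm{\boldsymbol{e}_h}$, together with the bound \eqref{eq:bound_eh}, $\norm{\boldsymbol{e}_h(s)} \lesssim h^{N+1}\sqrt{\alpha(s)(e^s-1)}$. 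This yields $\norm{\nabla \boldsymbol{e}_h(s)} \lesssim h^{N}$, uniformly for $s \in [0,T]$, since $\q \in L^\infty(0,T;\H^{N+2}(\Omega))$ makes $\alpha$ bounded.
\end{itemize}
Integrating over $[0,t]$ then gives $\norm{\uh(t)-\q(t)} \lesssim h^{N+1} + t\,h^{N} = \mathcal{O}(h^N)$, with a constant depending on $T$ and on $\seminorm{\q}_{L^\infty(0,T;\H^{N+2})}$.

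The main obstacle, and the reason only $h^N$ rather than $h^{N+1}$ is claimed, is the gradient estimate of $\boldsymbol{e}_h$. The energy argument controls $\boldsymbol{e}_h$ only in $L^2$, and for a nondissipative hyperbolic Galerkin scheme no $H^1$ control of the error is available directly. So I would accept the loss of one power of $h$ from the inverse inequality. The remaining points to check are the justification of the strong-form identity elementwise, which is immediate because $\nabla\cdot\fh = \A:\nabla\wh \in \Uh$, and that the initial-data and boundary hypotheses of Lemma \ref{lemma:cg_error} are implicitly assumed here.
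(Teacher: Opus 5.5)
Your argument is correct, but it follows a more direct route than the paper.

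The paper splits $\norm{\uh-\q}\le\norm{\uh-\wh}+\norm{\wh-\q}$ and takes the second term from Lemma~\ref{lemma:cg_error}. It then controls the projection offset $\boldsymbol{\delta}_h=(I-\PW)\uh$ by a separate energy estimate. Starting from $\partial_t\boldsymbol{\delta}_h=-(I-\PW)(\A:\nabla\wh)$, it uses Young's inequality and a second Gr\"onwall argument. Along the way it bounds $\norm{(I-\PW)\A:\nabla\wh}$ through $\norm{\nabla\boldsymbol{e}_h}$ (inverse inequality plus \eqref{eq:bound_eh}) and projection errors of $\q$, and it also needs a bound on the initial offset $\boldsymbol{\delta}_h(0)$.

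You instead integrate the strong-form identity $\partial_t\uh=-\A:\nabla\wh$ in time and compare it with the integral form of the exact equation. This reduces the whole error to $\norm{\uh(0)-\q(0)}+\seminorm{\A}_\infty\int_0^t\norm{\nabla(\wh-\q)}\,ds$.

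The decisive ingredient is the same in both proofs. The $H^1$ error of the continuous Galerkin part is obtained from its $L^2$ bound through the inverse inequality, and that is exactly where one power of $h$ is lost. Your version, however, does without the offset equation, the second Gr\"onwall step and the term $(I-\PW)\A:\nabla\q$. The time integration also adds no exponential factor beyond the one already present in \eqref{eq:bound_eh}.

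The paper's decomposition, in exchange, makes the structure of the error visible. The reconstruction $\wh=\PW\uh$ converges with order $N+1$, and the entire loss of one order sits in the offset $\uh-\PW\uh$.
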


\begin{proof}
 We proceed via the triangle inequality using the projected variable $\wh = \PW \u_h$:
 \begin{equation}
\label{eq:triangle_split}
\norm{\u_h(t) - \q(t)} \leq \norm{\u_h(t) - \wh(t)} + \norm{\wh(t) - \q(t)}.
 \end{equation}
 The second term is bounded by Lemma \ref{lemma:cg_error} as:
 \begin{equation}
\label{eq:bound_term2}
\norm{\wh(t) - \q(t)} =\mathcal{O}(h^{N+1}).
 \end{equation}
Then we need only to bound the first term. We define the projection offset $\boldsymbol{\delta}_h(t) \doteq \u_h(t) - \wh(t)$. Differentiating with respect to time and utilizing the discrete evolution equation $\partial_t \u_h = -\A :\nabla \wh$ (which holds in $\Uh$ since $\A$ is constant and $\nabla \wh \in \Uh$), we obtain:
 \begin{equation}
\partial_t \boldsymbol{\delta}_h = \partial_t \u_h - \PW (\partial_t \u_h) = (I - \PW) \partial_t \u_h = -(I - \PW)(\A : \nabla \wh).
\label{eq:offset}
 \end{equation}
 We multiply \eqref{eq:offset} by $\boldsymbol{\delta}_h$ and we integrate on $\Omega$ obtaining:
\begin{align*}
    \frac{1}{2}\partial_t \norm{\boldsymbol{\delta}_h }^2 &=  -\int \limits_{\Omega}(I-\PW)(\A:\nabla \wh) \cdot \boldsymbol{\delta}_h\,d\x\\
    & \leq \frac{1}{2} \norm{(I- \PW)\A:\nabla \wh}^2 + \frac{1}{2} \norm{ \boldsymbol{\delta}_h}^2.
\end{align*}
 To bound the first term, we introduce the gradient of the exact solution $\nabla \q$:
 \begin{align*}
 \norm{(I - \PW) \A : \nabla \wh} 
&\leq \norm{(I - \PW)\A:(\nabla \wh - \nabla \q)} + \norm{(I - \PW)\A : \nabla \q}\\
& \leq \norm{\A : (\nabla \wh - \nabla \q)} + \norm{(I - \PW)\A : \nabla \q}\\
&\leq \seminorm{\A}_\infty \norm{\nabla \wh - \nabla \q} + \norm{(I - \PW) \A:\nabla \q} \\
&\leq \seminorm{\A}_{\infty}\norm{\nabla \boldsymbol{e}_h} + \seminorm{\A}_{\infty}\norm{\nabla (\I - \PW) \q} + \norm{(I - \PW)\A : \nabla \q}.
 \end{align*}
 We bound the first term using an inverse inequality and \eqref{eq:bound_eh}:
 \begin{equation*}
\seminorm{\A}_{\infty}\norm{\nabla \boldsymbol{e}_h} \lesssim h^{-1}\norm{\boldsymbol{e}_h} \lesssim h^{N}\sqrt{\alpha(t)(e^t -1).}
 \end{equation*}
 The second and third terms can be bounded with standard approximation properties of $\PW$:
 \begin{equation*}
     \seminorm{\A}_{\infty}\norm{\nabla(\I - \PW)\q} + \norm{(I - \PW)\A:\nabla \q} \lesssim h^{N}\seminorm{\q}_{\H^{N+1}(\Omega)}.
 \end{equation*}
 Combining these two estimates, we obtain:
 \begin{equation*}
\frac{1}{2} \partial_t\norm{ \boldsymbol{\delta}_h(t)}^2 \leq \mathcal{O}(h^{2N}) + \frac{1}{2}\norm{\boldsymbol{\delta}_h(t)}^2 .
 \end{equation*}
 Integrating from $0$ to $t$:
 \begin{align*}
\norm{\boldsymbol{\delta}_h(t)}^2 & \leq \norm{\boldsymbol{\delta}_h(0)}^2+ O(h^{2N}) +  \int_0^t \norm{\boldsymbol{\delta}_h(s)}^2\,ds\\
&\leq O(h^{2N}) +  \int_0^t \norm{\boldsymbol{\delta}_h(s)}^2\,ds.
 \end{align*}
 We have used the following bound on the initial offset:
 \begin{equation*}
\norm{\boldsymbol{\delta}_h(0)} = \norm{(I - \PW)\u_h(0)} \leq \norm{\u_h(0) - \q(0)} + \norm{(I-\PW)\q(0)} \lesssim h^{N+ 1} \seminorm{\q(0)}_{\mathbf{H}^{N+1}}.
 \end{equation*}
 Then the proof is concluded applying Gr\"onwall's lemma.
\end{proof}

\section{More efficient flux reconstruction(s)}
\label{sec.effi}
In order to make the global $L^2$ projection more efficient, we propose the following simple strategy to find a suitable \textit{initial guess} for the conjugate gradient method that is used to solve \eqref{eqn.L2proj.dof}. The approach is based on a quasi-interpolation procedure, similar to that introduced by Ern and Guermond in \cite{ErnGuermond17}. For each element $T_k$ define a reconstruction stencil $\mathcal{S}_k$ which contains all the elements that share a common vertex with $T_k$. Then solve the following \textit{local} $L^2$ problem exactly,  
\begin{equation}
	\int \limits_{\mathcal{S}_k} \psi_i \psi_j d\x \, \tilde{\w}^k_j(t) = 
	\int \limits_{\mathcal{S}_k} \psi_i \phi_j d\x \, \hat{\u}_j(t),  
	\label{eqn.loc.L2proj.dof}
\end{equation}
which is easy to solve, since the dimension of the stencil mass matrix to be inverted is now sufficiently small and does not depend on the total number of elements $|\mathcal{T}_h|$ in $\mathcal{T}_h$, but only on the number of elements $|\mathcal{S}_k|$ in the reconstruction stencil $\mathcal{S}_k$. The inverse of this localized FEM mass matrix can actually be precomputed for each $T_k$,  multiplied with the right hand side matrix and stored for each element. Note that in \eqref{eqn.loc.L2proj.dof} the degrees of freedom $\tilde{\w}^k_j(t)$ are computed individually for each element $T_k$ and therefore global continuity is \textit{not} yet established. We then compute the \textit{initial guess} for the degrees of freedom of the globally continuous representation $\wh$ as
\begin{equation}
 \hat{\w}^0_j(t) = \frac{1}{|\mathcal{N}_j|} \sum \limits_{T_k \in \mathcal{N}_j} \tilde{\w}^k_j(t),
 \label{eqn.average}
\end{equation}
where $\mathcal{N}_j$ is the set of triangles $T_k$ that contain node number $j$. For internal degrees of freedom, i.e. those inside $T_k$ one clearly has $|\mathcal{N}_j|=1$, for nodes in the interior of edges (2D) / faces (3D) one has $|\mathcal{N}_j|=2$, and for elements in the interior of edges (3D) or on vertices (both 3D and 2D) one has in general a much larger number of elements, corresponding to those elements that share a common edge (3D) or vertex (3D and 2D). With the initial guess \eqref{eqn.average} one then solves the full problem \eqref{eqn.L2proj.dof} via the matrix-free conjugate gradient method up to a desired tolerance. 

\subsection{Genuinely multi-dimensional upwinding via the N-scheme } 

The idea outlined here is a local alternative to the $L^2$ projection and is based on the ideas of the $N$-scheme of residual distribution (RD) methods \cite{Roe:90,RoeSidilkover,vanderweide,energie,Mario} and has recently also been used in semi-discrete active flux (AF) / PAMPA schemes \cite{Abgrall_AF,AbgrallBoscheriLiu,AbgrallLiuLin,AbgrallLiuDg}. We evaluate $\tilde{\w}$ from $\hat{\u}$ by
\begin{equation}
 \hat{\w}_j(t) = \mathbf{N}_j^{-1} \sum \limits_{T_k \in \mathcal{N}_j} \mathbf{K}_{k,j}^+ \, \tilde{\w}^k_j(t),
 \label{eqn.Nscheme}
\end{equation}
with 
\begin{equation}
 \mathbf{N}_j = \sum \limits_{T_k \in \mathcal{N}_j} \mathbf{K}_{k,j}^+ 
\end{equation}
and 
\begin{equation}
 \mathbf{K}_{k,j}^+ = \mathbf{R}_n \big (\max \left( \boldsymbol{\Lambda_n}, 0 \right)+\varepsilon \mathbf{I}\big ) \mathbf{R}^{-1}_n  
\end{equation}
with $\mathbf{R}_n$ and $\boldsymbol{\Lambda_n}$, the matrix of right eigenvectors and the diagonal matrix of eigenvalues of the Jacobian matrix 
$$\A_n = \frac{\partial (\f \cdot \n_{k,j})}{\partial \q}$$
of the hyperbolic system \eqref{eqn.pde} in normal direction $\n_{k,j}$, which is the outward-pointing normal vector for degree of freedom $j$ in element $k$. The term $\varepsilon \mathbf{I}$ is here to avoid any singularity of $\mathbf{N}_j$. This kind of singularity may occur in cases where the flow is aligned with one of the faces.  In practice we take a small number of the order of $10^{-12}$ or simply half of the volume of the element.  For internal degrees of freedom we simply set $\mathbf{K}_{k,j}^+ = \mathbf{I}$ with $\mathbf{I}$ the identity matrix.  Of course there $\varepsilon=0$. For degrees of freedom in the vertices of an element we use the corner normal \cite{Despres2005,Maire2007,Despres2009,ShashkovCellCentered,Lipnikov2014,Maire2020,HTCLagrange,Mario,energie,AbgrallBoscheriLiu,AbgrallLiuLin,AbgrallLiuDg}, for degrees of freedom internal of an edge/face we use the edge/face normal. In the future we will also consider the use of composite finite volume schemes 
\cite{HochComposite1,HochComposite2,HochComposite3} for the calculation of  numerical fluxes in all  degrees of freedom of the finite element space $\Wh$, which can be either internal, or on faces, on edges or on vertices. 

\section{Numerical results}
\label{sec.results}

In the following we show numerical results obtained with the new CG-DG scheme applied to three different linear and nonlinear hyperbolic PDE systems, namely the equations of linear acoustics, the vacuum Maxwell equations and the nonlinear compressible Euler equations.    

\subsection{Equations of linear acoustics}

The equations of linear acoustics read 
\begin{eqnarray}    
    \frac{\partial \rho  \v}{\partial t} + \nabla p & = & 0, \label{eqn.ac.v} \\
    \frac{\partial p}{\partial t} + \rho c^2 \, \nabla \cdot \v & = & 0, \label{eqn.ac.p}      
\end{eqnarray}
with constant fluid density $\rho$, velocity vector $\v$, pressure $p$ and constant sound speed $c$. The above system satisfies the following extra conservation law for the total energy density $\mathcal{E} = \halb \rho \v^2 + \frac{1}{2 \rho c^2} p^2$: 
\begin{equation}
\frac{\partial \mathcal{E}}{\partial t} + \nabla \cdot \left( \v p \right) = 0.
    \label{eqn.ac.extra}
\end{equation}
From the momentum equation \eqref{eqn.ac.v} it is obvious that the velocity field remains curl-free if it was so initially, i.e. the following involution holds 
\begin{equation}
   \nabla \times \v = 0.
   \label{eqn.ac.inv}
\end{equation}
This means that in the new CG-DG scheme introduced in this paper, the discrete velocity field $\v_h \in \Uh$ must be initialized as the discrete gradient of a scalar potential $Z_h \in \Wh$ so that \eqref{eqn.ac.inv} holds at the discrete level. In the presence of discontinuities and in order to keep the velocity field curl-free, the artificial viscosity is chosen such that its structure is compatible with the equations. Following \cite{Sidilkover2025}, the pressure and the velocity  fields used in the numerical flux field are modified as follows: 
\begin{equation}
    \tilde \v_h = \v(\wh) - \epsilon \tilde{\nabla} p_h,
    \qquad 
    \tilde p_h  = p(\wh) - \epsilon \tilde{\nabla} \cdot \v_h,    
\end{equation}
with $\v(\wh)$ and $p(\wh)$ the averaged states given by $\wh$ and $\epsilon$ the artificial viscosity coefficient given already above. 

\subsubsection{Smooth acoustic wave propagation}

The first test for linear acoustics is taken from \cite{CompatibleDG1}, with initial condition  
\begin{equation}
  \v(\x, 0)  = 0, \qquad 
  p(\x, 0)  = p_0 \exp \left( -\halb \x^2/\sigma^2 \right) 
	\label{eqn.ic.gauss.max.acoustic}
\end{equation}
and $p_0=1$ and $\sigma=0.05$. 
Simulations are carried out in the domain $\Omega=[-\halb,+\halb]^d$ until a final time of $t=10$, without any artificial viscosity. We use an unstructured triangular mesh composed of a total number of 2028 triangles with $N_x=N_y=30$ elements along each edge of the computational domain. We use a CG-DG scheme of polynomial approximation degree $N=3$. 
The computational results obtained are depicted in Figure \ref{fig.smoothacoustics}. 
As expected, the method conserves total energy up to machine precision even over long times and the discrete velocity field remains curl-free up to machine precision, as expected. The linear growth in the curl error is due to the accumulation of roundoff errors in the computer. The growth of the total energy conservation error is due to the time discretization which is not exactly energy conservative and our results hold only at the semi-discrete level so far. For this reason, future research will also concern the use of exactly energy conservative time integrators.    

\begin{figure}[!htbp]
	\begin{center}
		\begin{tabular}{cc} 
			\includegraphics[width=0.45\textwidth]{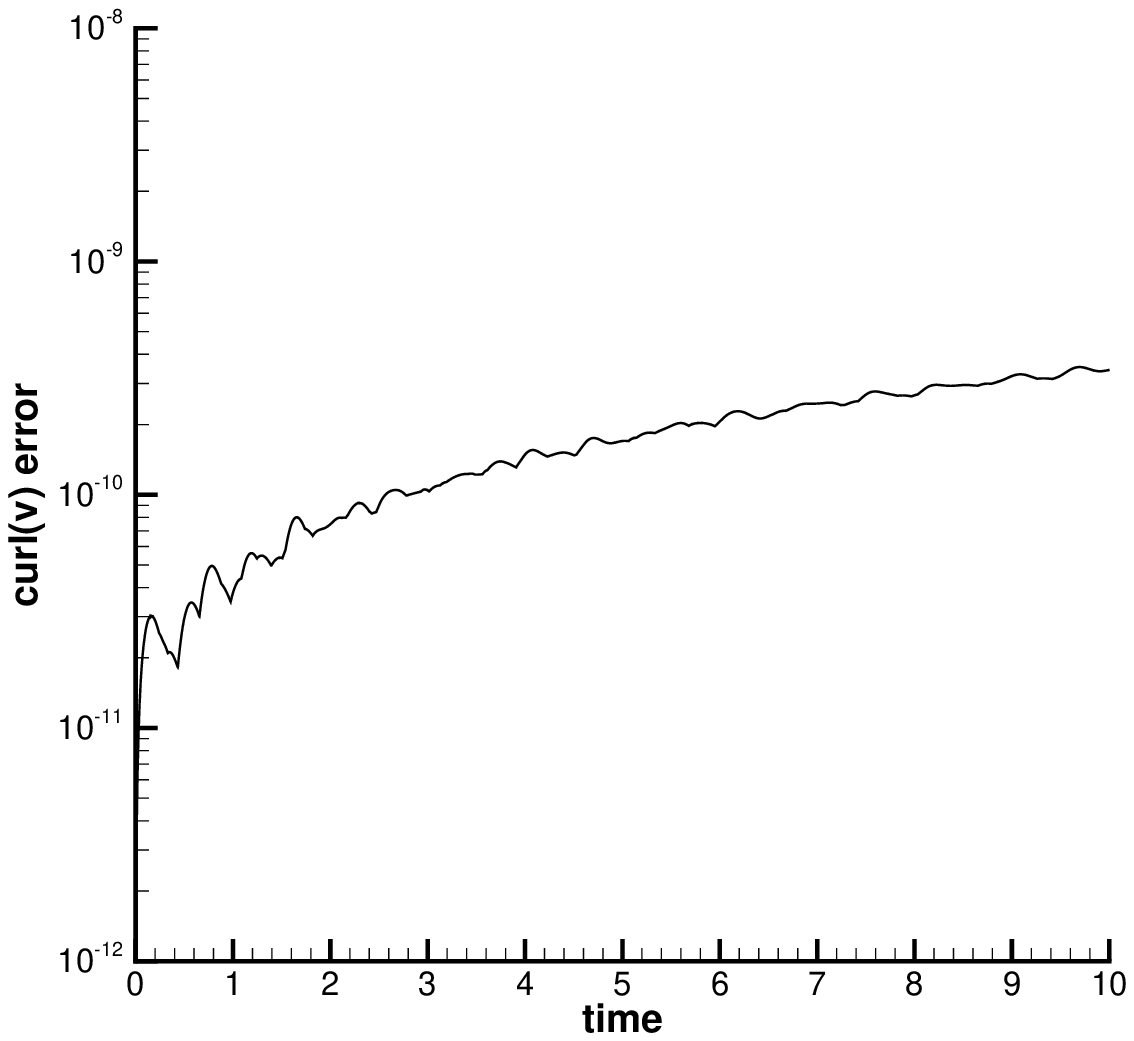}   & 
			\includegraphics[width=0.45\textwidth]{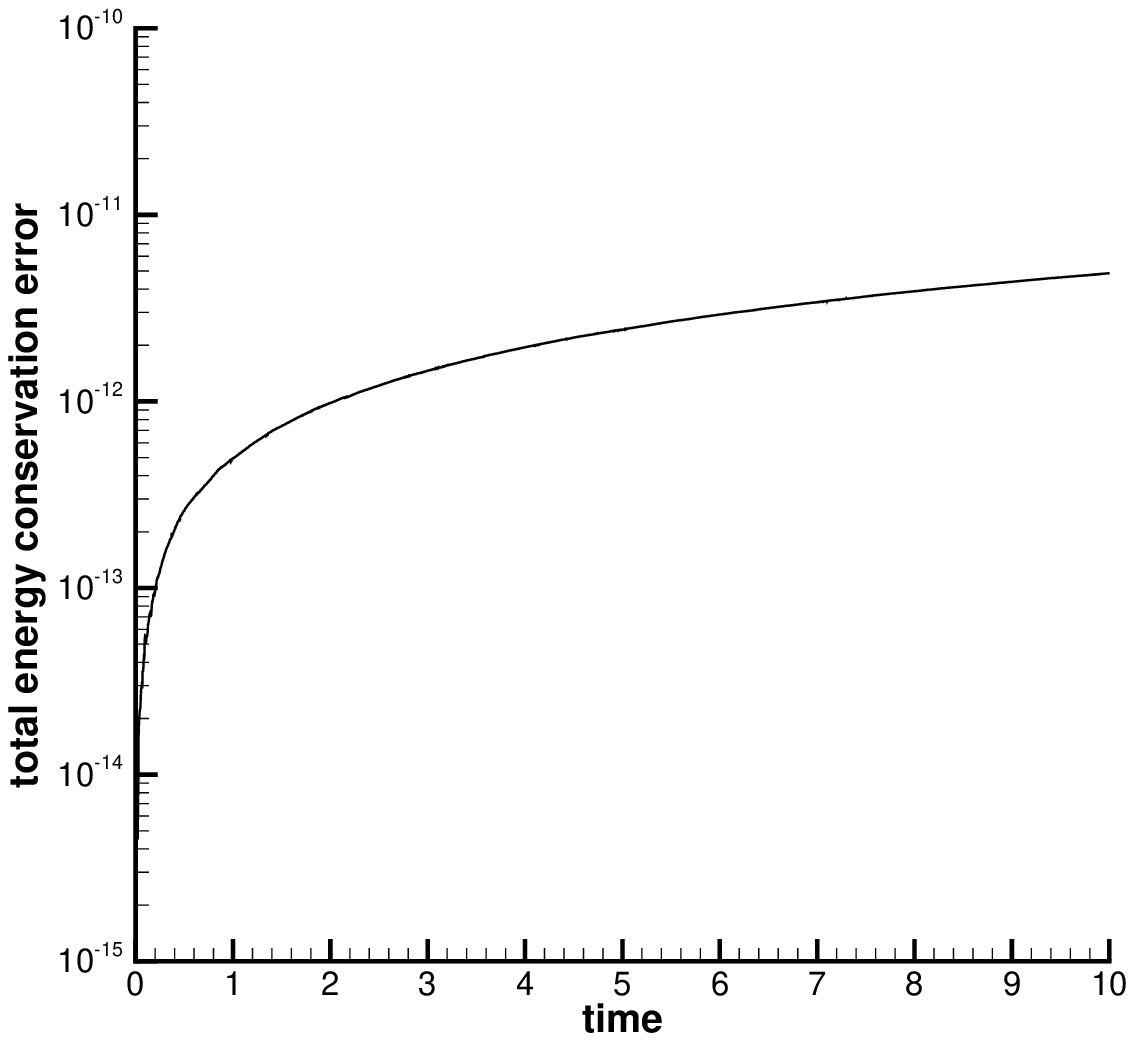}     
		\end{tabular} 
		\caption{Smooth acoustic wave propagation. Time evolution of the $L^\infty$ error of the curl of the velocity field (left) and time evolution of the total energy conservation error (right). } 
		\label{fig.smoothacoustics}
	\end{center}
\end{figure}

\subsubsection{Acoustic waves with discontinuities}
Next, we consider a  test problem that contains a nontrivial initial velocity field and an initially discontinuous pressure profile. 
The initial velocity is computed as the gradient of a scalar potential $Z$ and reads  
\begin{equation}
    \v(\x,0) = \nabla Z, \qquad \textnormal{ with } \qquad 
    Z = \sin(2 \pi x) \sin(2 \pi y), 
\end{equation}
while the initial pressure is 
\begin{equation}
    p(\x,0) = \left\{   \begin{array}{ll} 
    1 & \qquad \textnormal{if} \qquad r \leq R, \\
    0 & \qquad \textnormal{if} \qquad r > R,
    \end{array} \right. 
\end{equation}
with $r = \left\| \x \right\|$ and $R=0.25$. 
The computational domain is $\Omega = [-\halb,+\halb]^2$ with periodic boundaries everywhere. The unstructured mesh is composed of 3608 triangles, with $40$ elements along each edge of the square domain. 
Simulations are carried out until $t=0.1$ with the new CG-DG scheme with polynomial approximation degree $N=3$ and the compatible artificial viscosity activated, since the solution presents discontinuities.  
The computational results are depicted in Figures \ref{fig.res.ac.expl} and \ref{fig.res.ac.expl3D}. 
One can observe that the curl errors remain of the order of machine precision, despite a linear growth that is due to the accumulation of roundoff errors in the computer. The energy is decreasing since the method is dissipative due to the presence of the artificial viscosity. 

\begin{figure}[!htbp]
	\begin{center}
		\begin{tabular}{cc} 
			\includegraphics[width=0.45\textwidth]{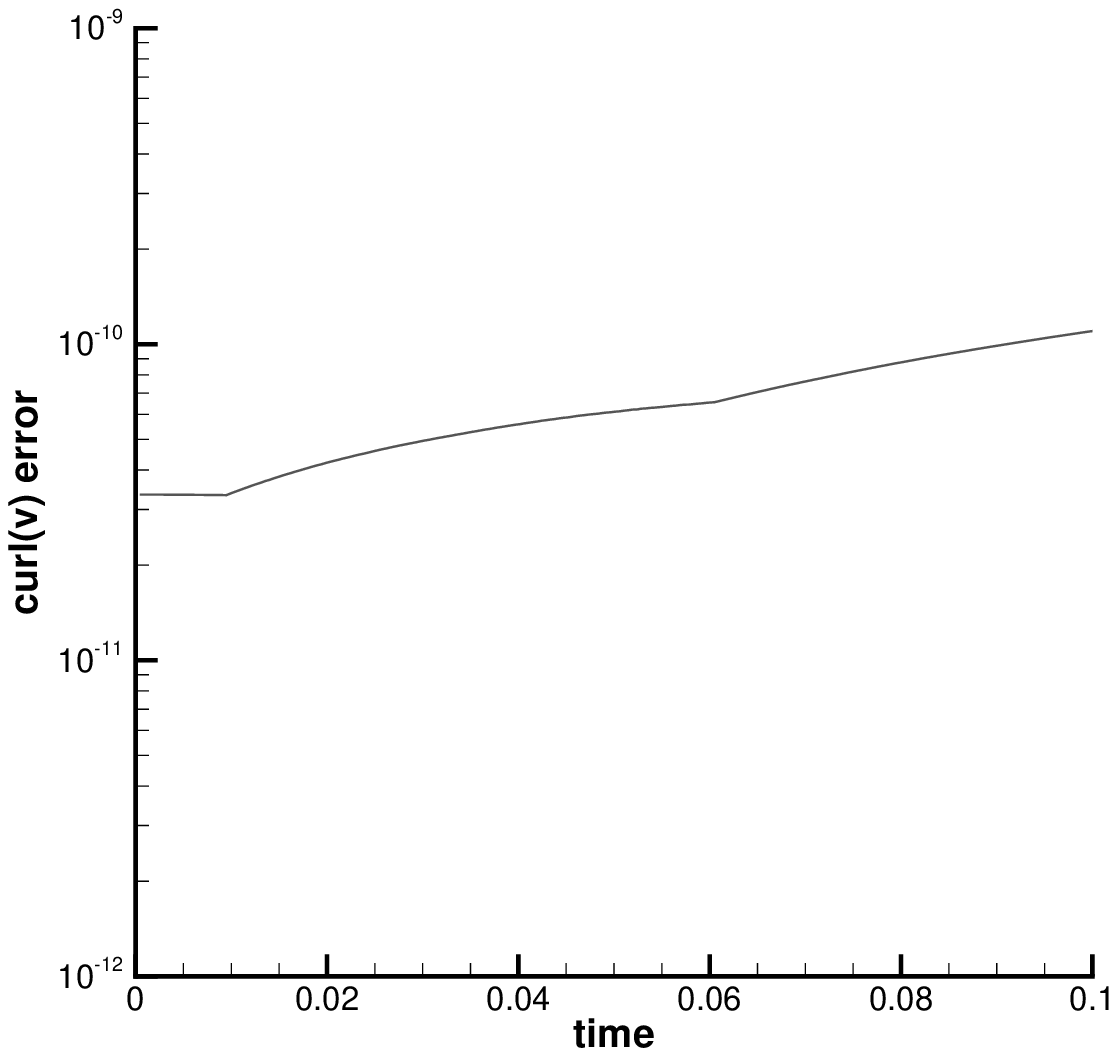}   & 
			\includegraphics[width=0.45\textwidth]{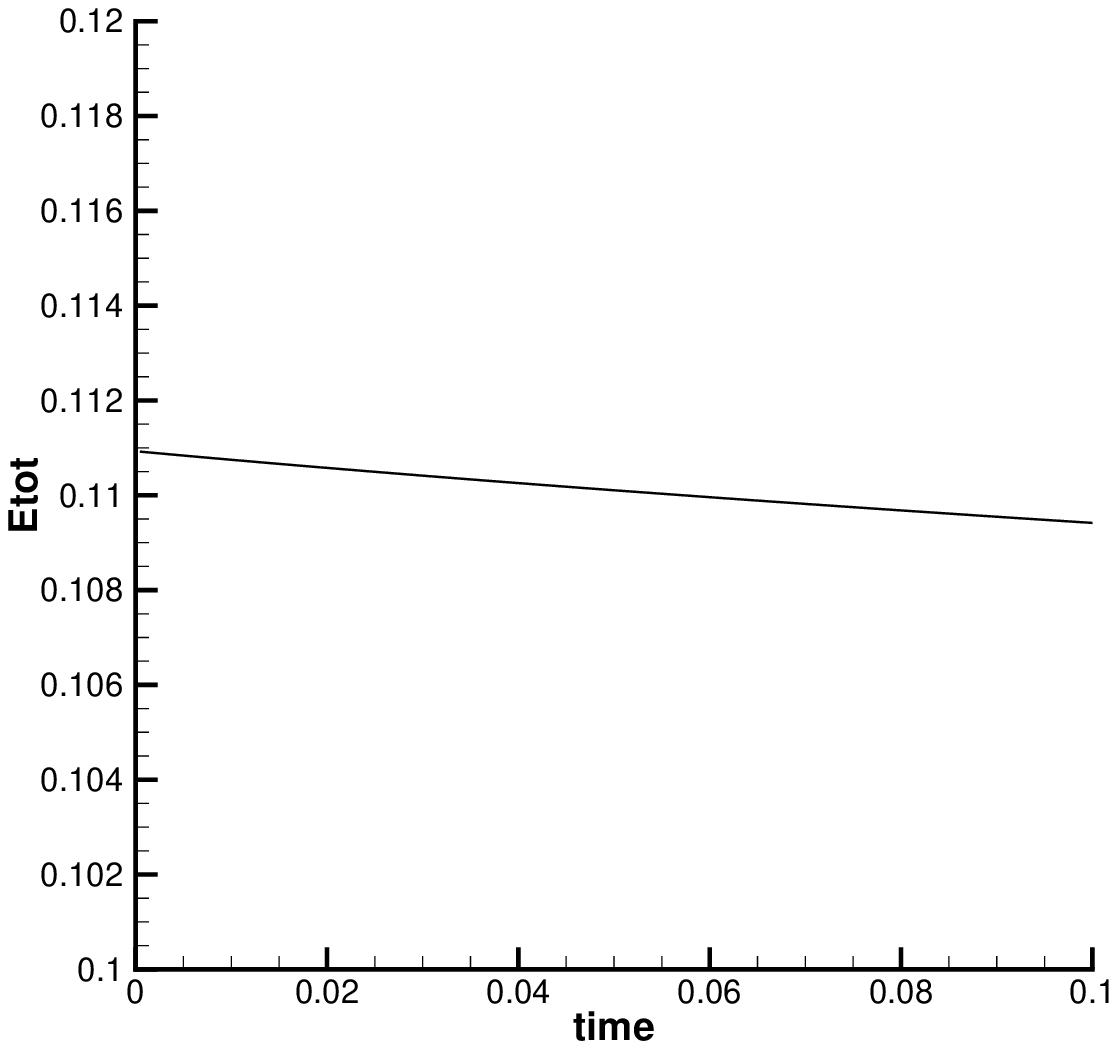}     
		\end{tabular} 
		\caption{Acoustic waves with discontinuities. Time evolution of the $L^\infty$ error of the curl of the velocity field (left) and time evolution of the total energy (right). The method is dissipative since the artificial viscosity for the treatment of discontinuities is switched on.} 
		\label{fig.res.ac.expl}
	\end{center}
\end{figure}

\begin{figure}[!htbp]
	\begin{center}
			\includegraphics[trim=10 10 10 10,clip,width=0.75\textwidth]{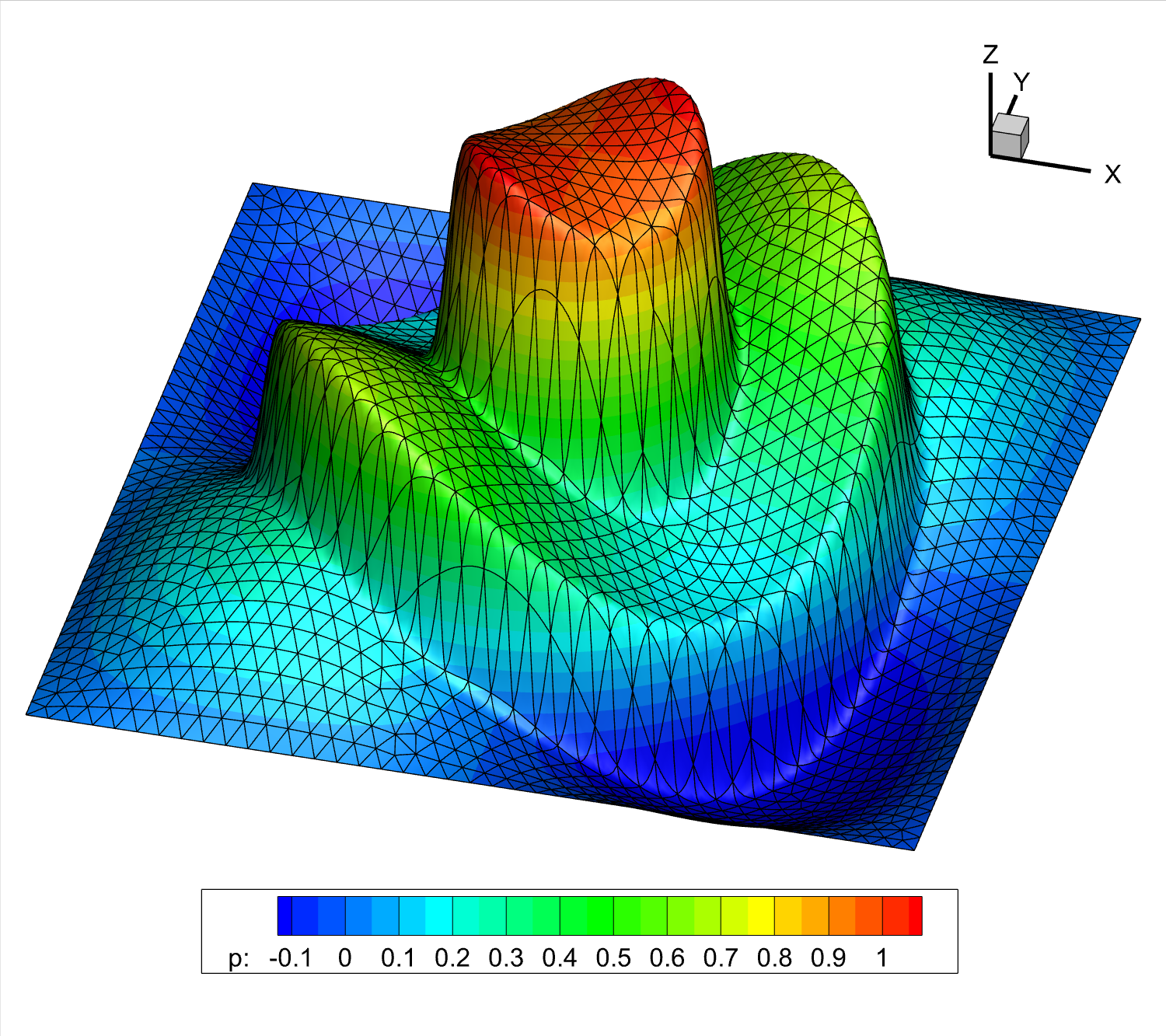}       
		\caption{3D plot of the pressure and the computational mesh at time $t=0.1$.} 
		\label{fig.res.ac.expl3D}
	\end{center}
\end{figure}

\subsection{Vacuum Maxwell equations}

The vacuum Maxwell equations with unit light speed and in the absence of charges are  
\begin{eqnarray}    
    \frac{\partial \B}{\partial t} + \nabla \times \E & = & 0, \label{eqn.max.B} \\
    \frac{\partial \E}{\partial t} - \nabla \times \B & = & 0. \label{eqn.max.E}      
\end{eqnarray}
Here, $\E$ and $\B$ are the electric and magnetic field, respectively. The system satisfies the following extra conservation law for the total energy density $\mathcal{E} = \halb \E^2 + \halb \B^2$: 
\begin{equation}
\frac{\partial \mathcal{E}}{\partial t} + \nabla \cdot \left( \E \times \B \right) = 0.
    \label{eqn.max.extra}
\end{equation}
From the Maxwell equations \eqref{eqn.max.B} and \eqref{eqn.max.E} it becomes immediately clear that both, the magnetic field as well as the electric field must remain divergence-free for all times, if they were initially divergence-free. This means that the following two involutions must hold:
\begin{equation}
   \nabla \cdot \B = 0, \qquad \textnormal{ and } \qquad 
   \nabla \cdot \E = 0.
   \label{eqn.max.inv}
\end{equation}
Hence, for the CG-DG scheme the initial electric and magnetic fields in $\Uh$ must be computed as the discrete curl of two vector potentials in $\Wh$ 
in order to have \eqref{eqn.max.inv} satisfied at the discrete level for all times. 
In order to treat solutions with discontinuities and to maintain at the same time the divergence-free property of the magnetic and the electric field, the artificial viscosity must have a structure that is compatible with the equations. Therefore, the electric and magnetic fields used in the fluxes are modified as follows: 
\begin{equation}
    \tilde \E_h = \E(\wh) + \epsilon \tilde{\nabla} \times \B_h,
    \qquad 
    \tilde \B_h = \B(\wh) - \epsilon \tilde{\nabla} \times \E_h.    
\end{equation}
with $\E(\wh)$ and $\B(\wh)$ the electric and magnetic fields contained in $\wh$ and $\epsilon$ the artificial viscosity coefficient. 

\subsubsection{Smooth electromagnetic wave propagation}

To test the properties of our new CG-DG scheme, we solve the following test problem with initial condition data   
\begin{equation}
  \B(\x, 0)  = \mathbf{B}_0 \exp \left( -\halb \x^2/\sigma^2 \right), \qquad 
  \E(\x, 0)  = \mathbf{E}_0 \exp \left( -\halb \x^2/\sigma^2 \right).
	\label{eqn.ic.gauss.max.maxwell}
\end{equation}
Simulations are run in the domain $\Omega=[-\halb,+\halb]^2$ until a final time of $t=10$, using a polynomial  approximation degree of $N=3$. The mesh contains 2028 triangles with $N_x=N_y=30$ elements along each edge of the computational domain. For the initial data we set $\B_0 = 0$ and $\E_0 = (0,0,1)$. The half width is chosen as $\sigma=0.05$.
The computational results are shown in Figure \ref{fig.smoothmaxwell}. As expected, the method conserves total energy and the magnetic and electric field remain both divergence-free up to machine precision. The linear growth in time in the divergence errors is due to the accumulation of roundoff errors in the computer, while the linear growth in time of the total energy conservation error is due to time discretization errors as our energy conservation property only holds on the semi-discrete level.  

\begin{figure}[!htbp]
	\begin{center}
		\begin{tabular}{ccc} 
			\includegraphics[width=0.3\textwidth]{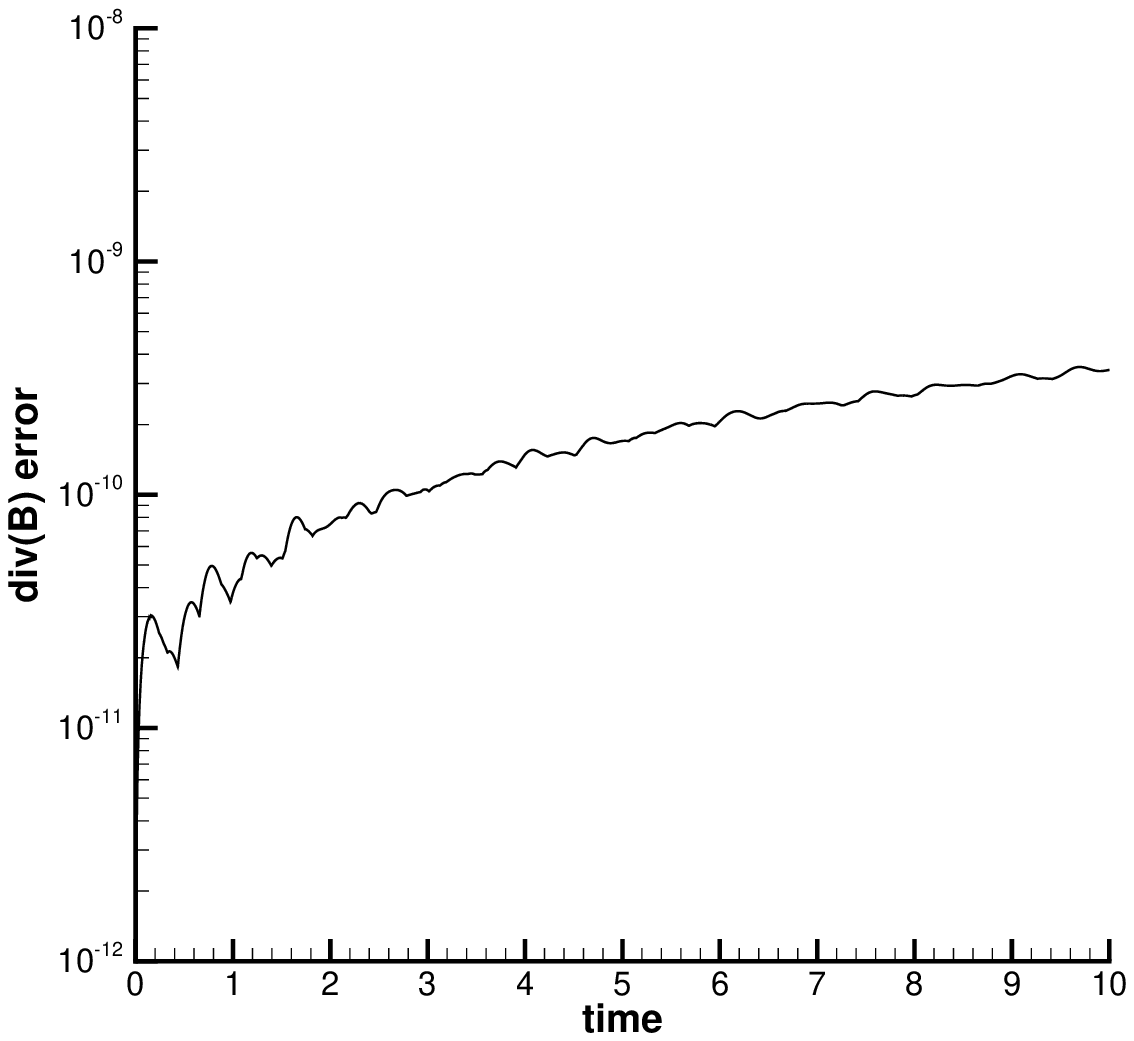}   & 
			\includegraphics[width=0.3\textwidth]{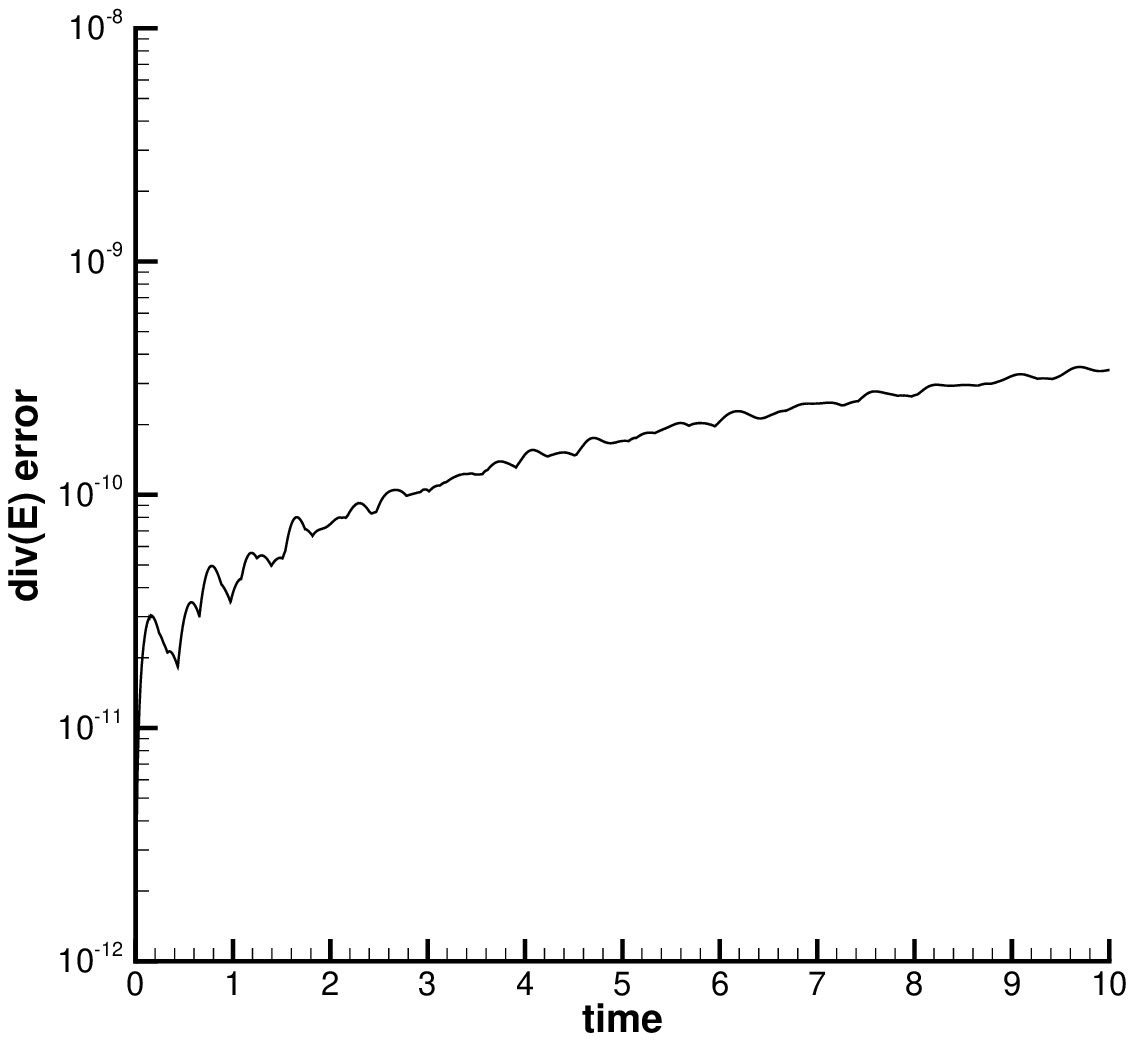}   & 
			\includegraphics[width=0.3\textwidth]{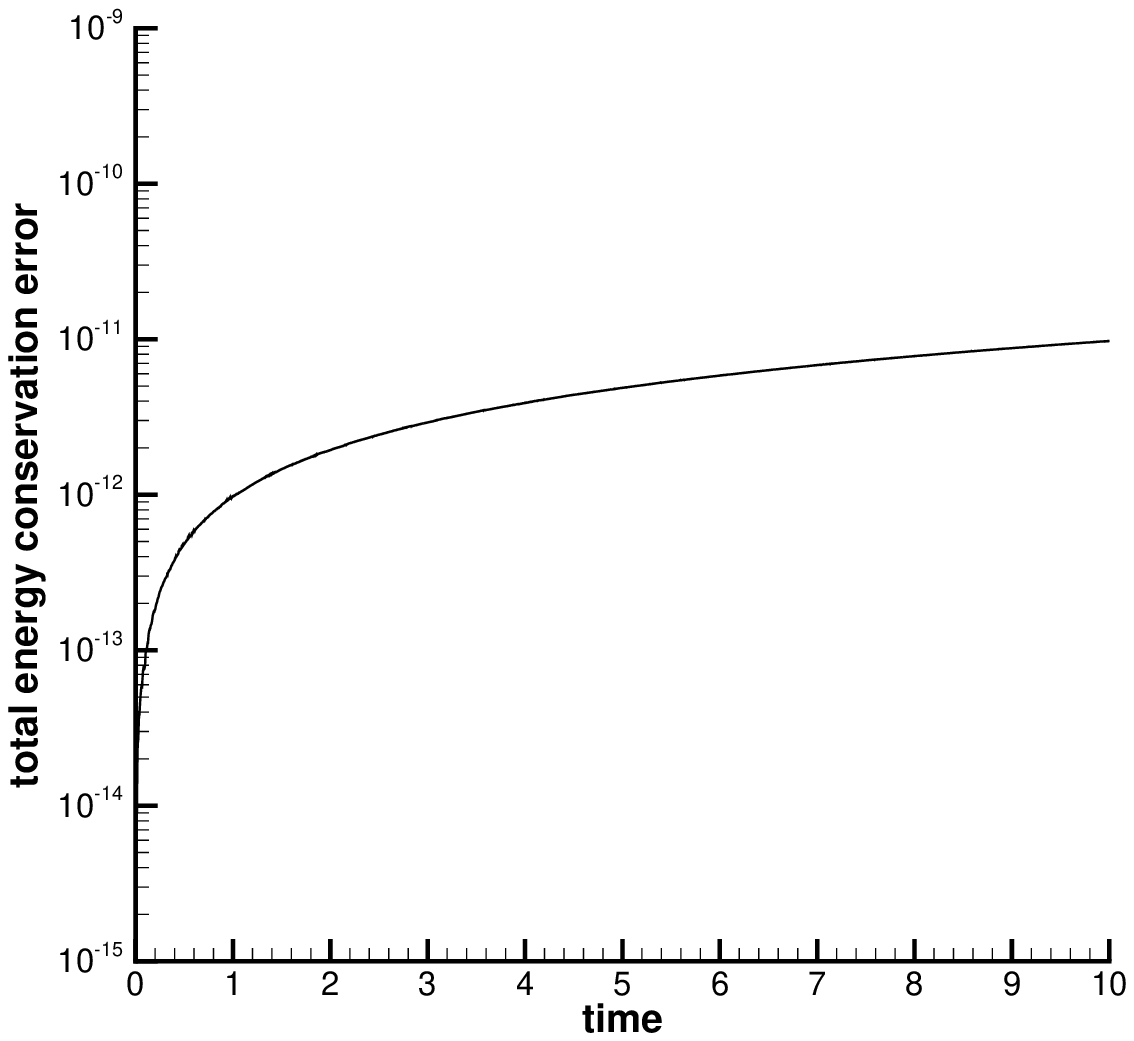}     
		\end{tabular} 
		\caption{Smooth electromagnetic wave propagation. Time evolution of the $L^\infty$ error of the divergence of the magnetic field (left), of the divergence of the electric field (center) and time evolution of the total energy conservation error (right). } 
		\label{fig.smoothmaxwell}
	\end{center}
\end{figure}

\subsubsection{Electromagnetic waves with discontinuities}

We consider the following test problem with nontrivial initial magnetic and electric field, including discontinuities. 
The $x$ and $y$ components of the initial electric and magnetic field are computed as the curl of a vector potential $\A$:   
\begin{equation}
    \B(\x,0) = \nabla \times \A,
    \quad 
    \E(\x,0) = \nabla \times \A,
    \quad \textnormal{ with } \quad 
    \A = \left( 0,0,\sin(2 \pi x) \sin(2 \pi y) \right), 
\end{equation}
while the initial $z$ component of the electric and magnetic field are initialized by  
\begin{equation}
    E_3(\x,0), B_3(\x,0)  = \left\{   \begin{array}{ll} 
    1 & \qquad \textnormal{if} \qquad r \leq R, \\
    0 & \qquad \textnormal{if} \qquad r > R,
    \end{array} \right. 
\end{equation}
with $r = \left\| \x \right\|$ and $R=0.25$. 
The computational domain is $\Omega = [-\halb,+\halb]^2$ with periodic boundaries everywhere. The unstructured mesh is composed of 3608 triangles, with $40$ elements along each edge of the square domain. 
Simulations are carried out until $t=0.1$ with the new CG-DG scheme with polynomial approximation degree $N=3$ and the compatible artificial viscosity activated.  
The computational results are depicted in Figures \ref{fig.res.max.expl} and \ref{fig.res.max.expl3D}.
The divergence errors of both the magnetic and the electric field remain of the order of machine precision, as expected. The observable linear growth in the errors is due to the accumulation of roundoff errors in the computer. In this test the energy is decreasing, since the method is dissipative due to the presence of the artificial viscosity. 

\begin{figure}[!htbp]
	\begin{center}
		\begin{tabular}{ccc} 
			\includegraphics[width=0.3\textwidth]{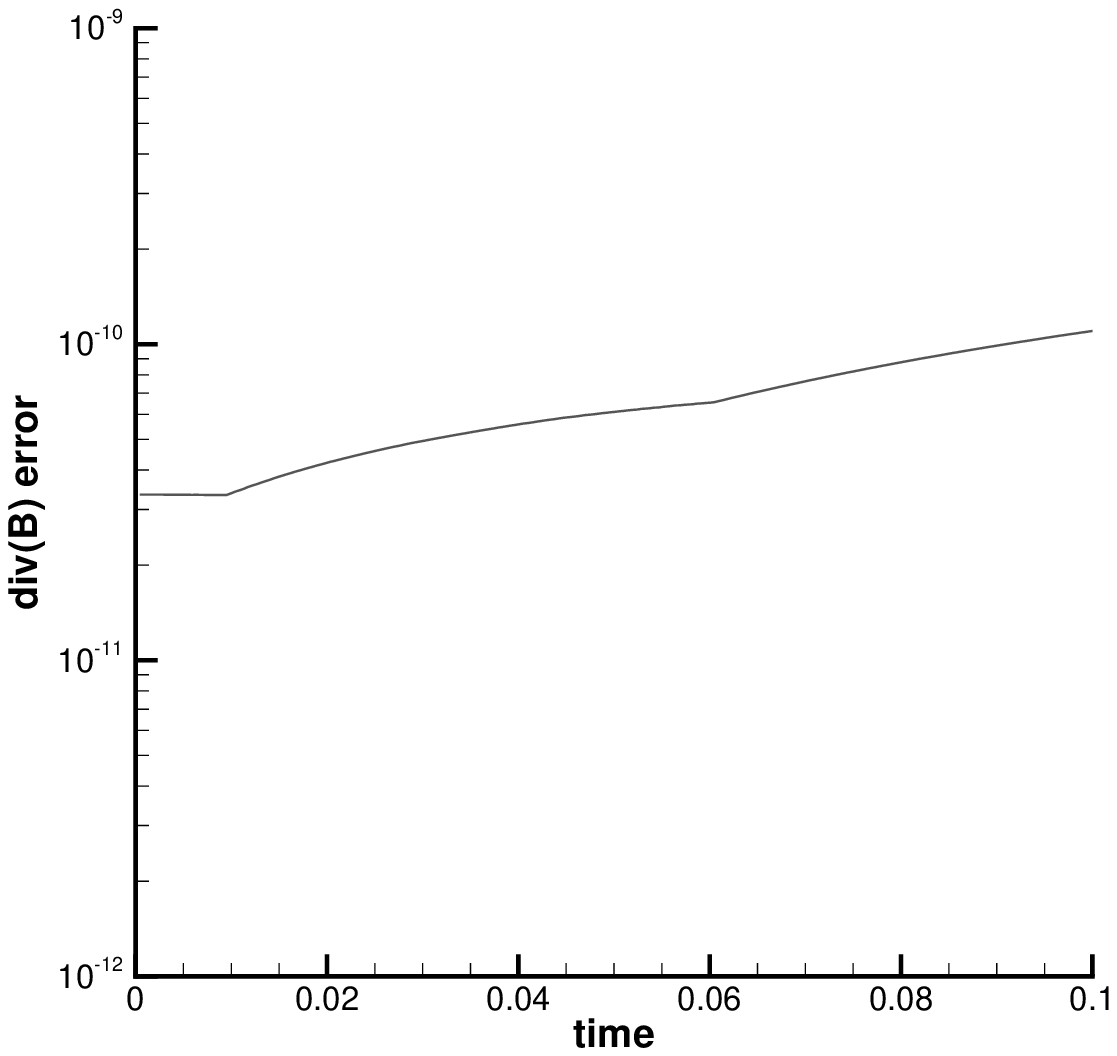}   & 
			\includegraphics[width=0.3\textwidth]{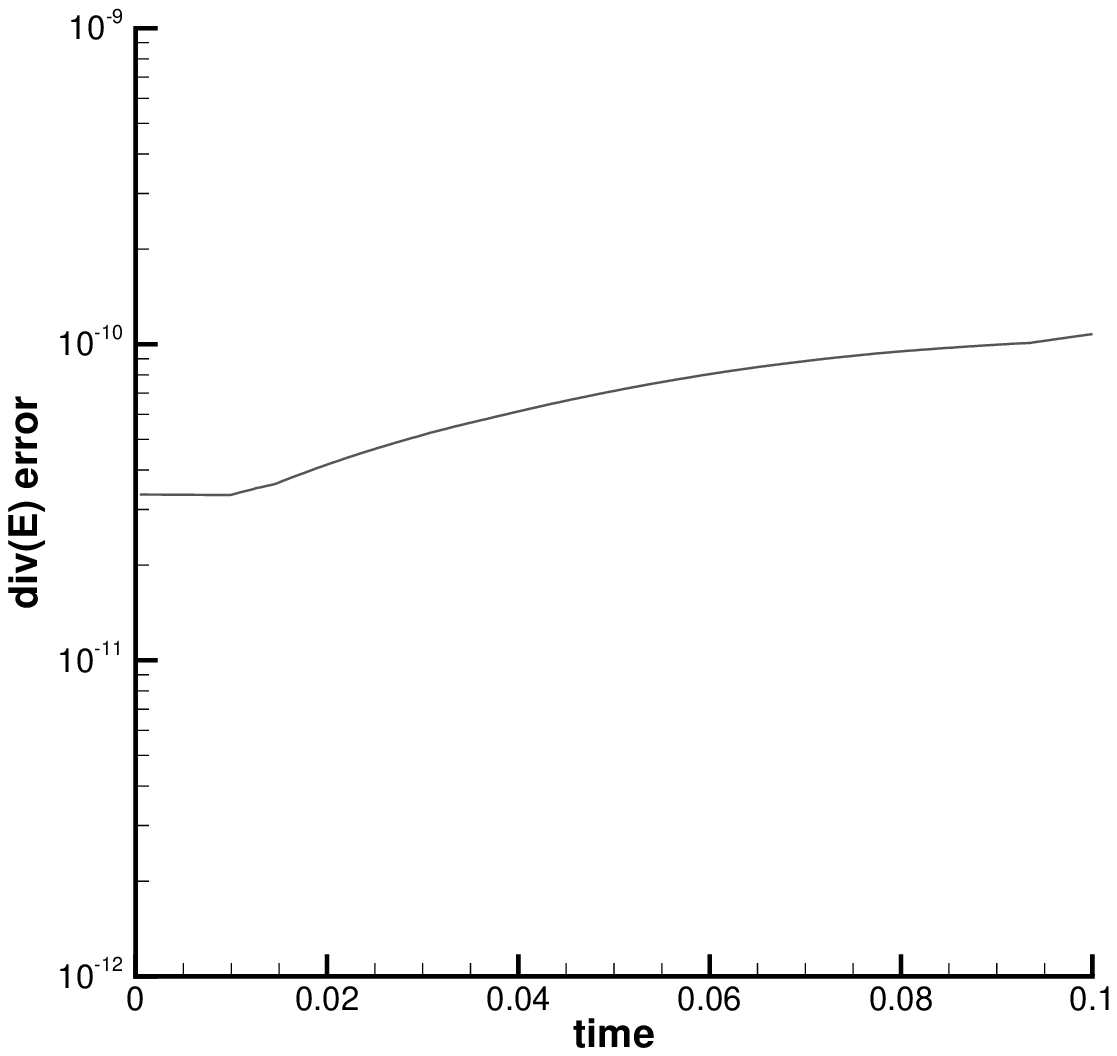}   &      
			\includegraphics[width=0.3\textwidth]{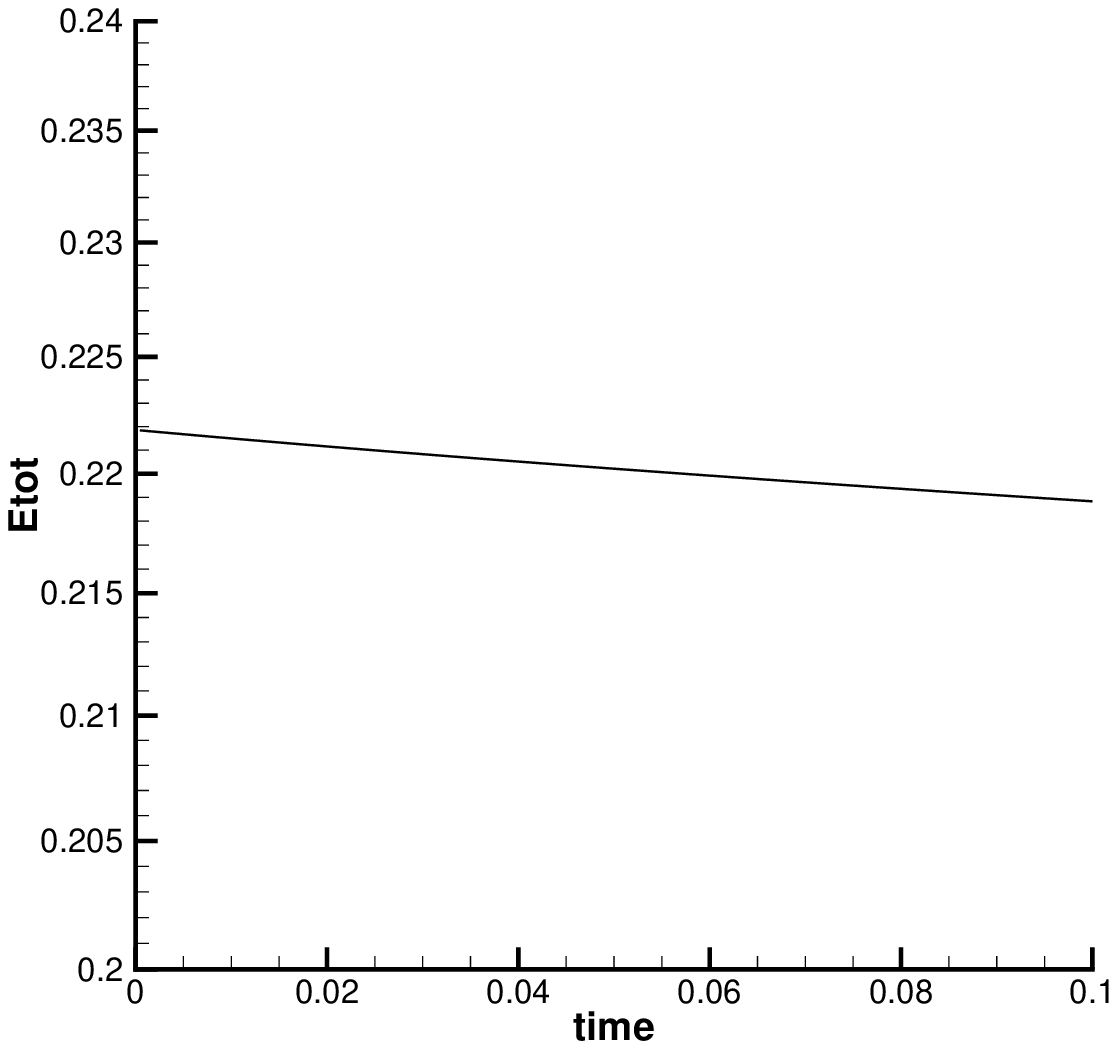}          
		\end{tabular} 
		\caption{Electromagnetic waves with discontinuities. Time evolution of the $L^\infty$ errors of the divergence of the magnetic field (left) and of the divergence of the electric field (center) as well as the time evolution of the total energy (right). The method is dissipative since the artificial viscosity for the treatment of discontinuities is switched on.} 
		\label{fig.res.max.expl}
	\end{center}
\end{figure}

\begin{figure}[!htbp]
	\begin{center}
			\includegraphics[trim=10 10 10 10,clip,width=0.75\textwidth]{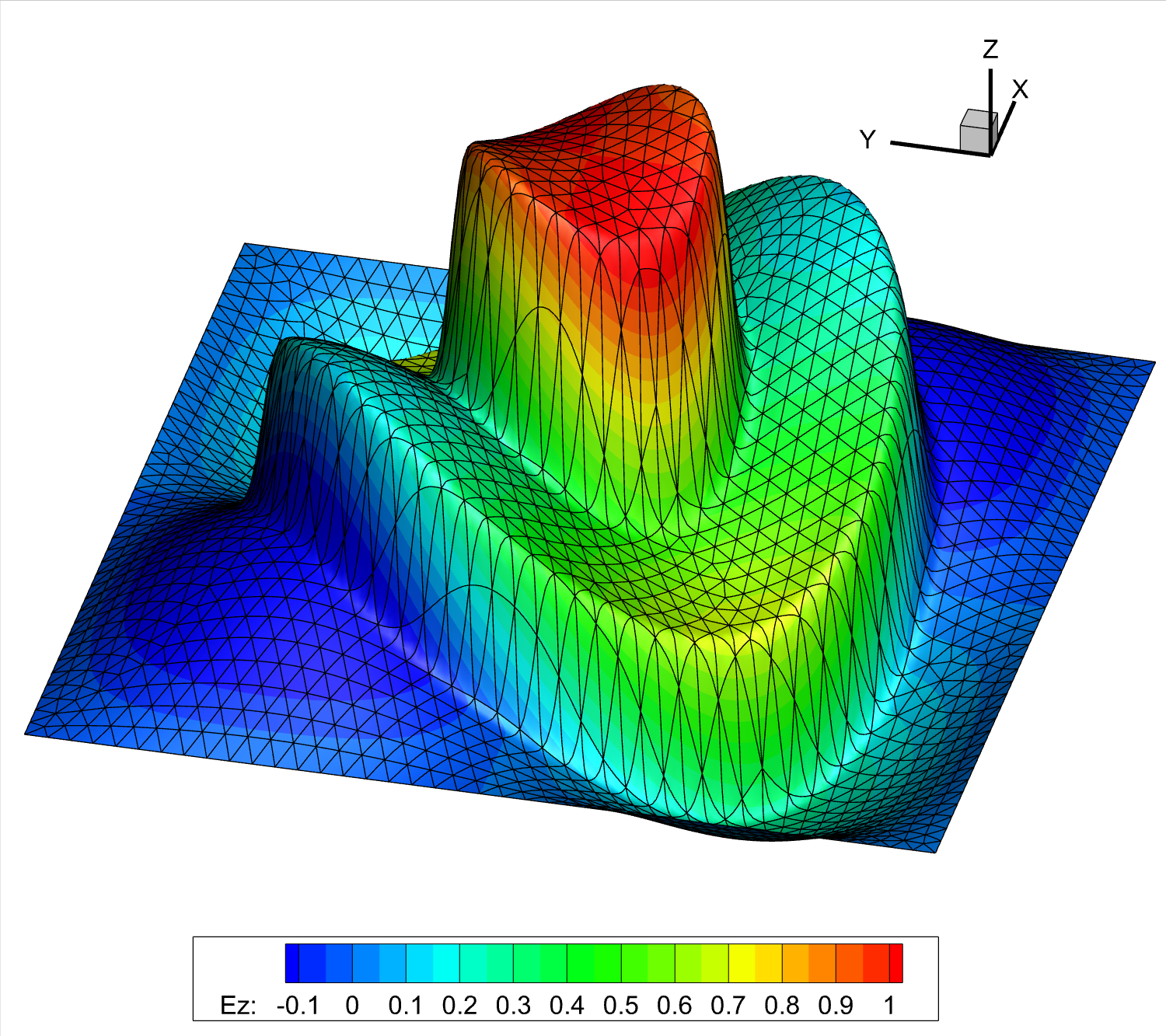}       
		\caption{3D plot of the $z$ component of the magnetic field, coloured by the $z$ component of the electric field, together with the computational mesh at time $t=0.1$.} 
		\label{fig.res.max.expl3D}
	\end{center}
\end{figure}

\subsection{Nonlinear compressible Euler equations}

So far all PDE systems considered in this section were linear and did not present shock waves. Here we apply our new CG-DG scheme also to a nonlinear system of conservation laws, namely the compressible Euler equations. They read
\begin{eqnarray}
    \frac{\partial \rho}{\partial t} + \nabla \cdot (\rho \v) & = & 0, \label{eqn.euler.mass} \\
    \frac{\partial (\rho \v)} {\partial t} + \nabla \cdot (\rho \v \otimes \v ) + \nabla p & = & 0, \label{eqn.euler.mom} \\
    \frac{\partial \mathcal{E}} {\partial t} + \nabla \cdot \left( \v ( \mathcal{E} + p ) \right)  & = & 0, \label{eqn.euler.ene}     
\end{eqnarray}
with $\rho$ the fluid density, $\v$ the velocity vector, $p$ the pressure, $\mathcal{E} = \rho E = \halb \rho \v^2 + \rho e$ the total energy density and $e$ the specific internal energy. For ideal gases the internal energy density $\rho e$ is given by the following equation of state:  
\begin{equation}
\rho e = \frac{p}{\gamma-1},
\label{eqn.eos}
\end{equation}
with $\gamma = c_p/c_v$ the ratio of specific heats,
where $c_p$ is the specific heat at constant pressure  and $c_v$ is the specific heat at constant volume. For diatomic gases it typically assumes the value $\gamma = 1.4$.
The Euler equations satisfy the following entropy inequality
\begin{equation}
\frac{\partial (\rho S)}{\partial t} + \nabla \cdot \left( \v \rho S \right) \ge 0,
\end{equation}
with the specific entropy $S$ that is linked to the other state variables again via the equation of state 
\begin{equation}
    e = \frac{\rho^{\gamma-1}}{\gamma-1} \exp\left( S/c_v \right). 
\end{equation}
However, in this paper the entropy inequality is not explicitly considered at the discrete level. Constructing a provably entropy stable version of our new CG-DG scheme is the topic of future research and out of scope of this work. 
However, some basic ideas on how to achieve this in the special case $N=0$ have been outlined in Section \ref{sec:Lagrangian}. 

\subsection{Numerical convergence study for the isentropic vortex}

The order of accuracy of the new CG-DG scheme is  experimentally verified via the well-known isentropic vortex problem, see \cite{HuShuTri}.  
The problem is solved in a periodic domain $\Omega = [0,10]^2$ until a final time of $t=0.2$. In this isentropic flow the entropy is constant. The velocity, temperature, density and pressure profiles read 
\begin{equation}
	\label{ShuVortDelta}
    \v = 
    \frac{\varepsilon}{2\pi}e^{\frac{1-r^2}{2}} \left(\begin{array}{c} 5-y \\ x-5 \end{array}\right), \nonumber \\ 
	\quad 
	\rho = 1 + (1+\delta T)^{\frac{1}{\gamma-1}}, \quad p = 1 + \left( 1+\delta T \right)^{\frac{\gamma}{\gamma-1}}
\end{equation} 
with the temperature fluctuation 
\begin{equation}
 \delta T = -\frac{(\gamma-1)\varepsilon^2}{8\gamma\pi^2}e^{1-r^2}, 
\end{equation}
$r^2=(x-5)^2+(y-5)^2$ and $\varepsilon=5$. 
The above vortex is 
a stationary solution of the compressible Euler equations, hence 
$\q(\x,t) = \q(\x,0)$. The numerical convergence study of the CG-DG schemes is carried out with different polynomial approximation degrees $N$ on a sequence of successively refined unstructured triangular meshes composed of $N_x \times N_y$ elements on the boundaries of $\Omega$. In Tables \ref{tab.conv.DG} and \ref{tab.conv.CG} we report the $L^2$ error norms computed for the DG solution $\uh \in \Uh$ and for the CG solution $\wh \in \Wh$, respectively, at the final time, together with the numerical convergence rates of the scheme. Results are shown for the density $\rho$, the momentum density component $\rho v_1$ and the energy density $\mathcal{E}$.    
\begin{table}[!ht]  
	\caption{$L^2$ error norms and numerical convergence results for the DG approximation $\uh \in \Uh$ of density $\rho$, momentum density $\rho v_1$ and total energy density $\mathcal{E}$ at time $t=0.2$ obtained with the CG-DG scheme applied to the isentropic vortex of the compressible Euler equations. } 
	\begin{center} 
			\renewcommand{\arraystretch}{1.1}
			\begin{tabular}{ccccccc} 
				\hline
				$N_x$ & $\rho$ & $\rho v_1$ & $\mathcal{E}$ & $\mathcal{O}(\rho)$ & $\mathcal{O}(\rho v_1)$ & $\mathcal{O}(\mathcal{E})$ \\ 
				\hline
				\multicolumn{7}{c}{CG-DG scheme - $N=0$ - errors for the DG solution $\uh \in \Uh$}  \\
				\hline
				40	& 1.67642E-01	& 2.29380E-01	& 5.81673E-01	 &      &      &       \\
				60	& 1.21994E-01	& 1.72001E-01	& 4.27463E-01	 & 0.78 & 0.71 & 0.76  \\
				80  & 9.25884E-02	& 1.29276E-01	& 3.22676E-01	 & 0.96 & 0.99 & 0.98  \\
				100 & 8.80546E-02   & 1.08306E-01   & 3.11585E-01    & 0.22 & 0.79 & 0.16  \\    
				120 & 7.36997E-02   & 9.52808E-02   & 2.60744E-01    & 0.98 & 0.70 & 0.98  \\    
				\hline
				\multicolumn{7}{c}{CG-DG scheme - $N=1$ - errors for the DG solution $\uh \in \Uh$ }  \\
				\hline 
				20  & 5.27050E-02	& 6.76255E-02	& 1.89896E-01	& 	   &      &       \\
				40  & 2.38234E-02	& 2.29451E-02	& 8.30463E-02	& 1.15 & 1.56 & 1.19  \\
				60  & 1.50982E-02	& 1.31298E-02	& 5.25043E-02	& 1.12 & 1.38 & 1.13  \\
				80  & 1.11210E-02	& 9.52188E-03	& 3.84636E-02	& 1.06 & 1.12 & 1.08  \\
				100 & 9.10958E-03	& 7.26372E-03	& 3.16733E-02	& 0.89 & 1.21 & 0.87  \\
				\hline
				\multicolumn{7}{c}{CG-DG scheme - $N=2$ - errors for the DG solution $\uh \in \Uh$}  \\
				\hline
				20	& 5.10101E-03	& 5.50622E-03	& 1.85874E-02	& 	   &        &      \\ 
				30	& 1.76491E-03	& 1.80041E-03	& 6.71338E-03	& 2.62 & 2.76	& 2.51 \\
				40	& 8.98201E-04	& 8.45640E-04	& 3.46203E-03	& 2.35 & 2.63	& 2.30 \\
				60	& 3.85040E-04	& 3.04814E-04	& 1.45175E-03	& 2.09 & 2.52	& 2.14 \\
				80	& 2.03809E-04	& 1.49610E-04	& 7.74628E-04	& 2.21 & 2.47	& 2.18 \\
				\hline
				\multicolumn{7}{c}{CG-DG scheme $N=3$ - errors for the DG solution $\uh \in \Uh$}  \\
				\hline
				10	& 5.45417E-03	& 6.89276E-03	& 2.02366E-02	& 	   &      &      \\
				20	& 4.93639E-04	& 5.56104E-04	& 1.92538E-03	& 3.47 & 3.63 & 3.39 \\
				30	& 1.31371E-04	& 1.44984E-05	& 5.17261E-04	& 3.26 & 3.32 & 3.24 \\
				40	& 5.33164E-05	& 5.85443E-05	& 2.12223E-04	& 3.13 & 3.15 & 3.10 \\
				60	& 1.90272E-05	& 1.70220E-05	& 7.33671E-05	& 2.54 & 3.05 & 2.62 \\
				\hline
				\multicolumn{7}{c}{CG-DG scheme $N=4$ - errors for the DG solution $\uh \in \Uh$}  \\
				\hline
				8	& 2.51710E-03	& 2.99501E-03	& 2.02366E-02	& 	   &      &      \\
				12	& 7.58331E-04	& 9.18014E-04	& 1.92538E-03	& 2.96 & 2.92 & 2.99 \\
				16	& 2.01162E-04	& 2.36542E-04	& 5.17261E-04	& 4.61 & 4.71 & 4.31 \\
				20	& 7.82146E-05	& 9.04805E-05	& 2.12223E-04	& 4.23 & 4.31 & 3.99 \\
				30	& 1.46141E-05	& 1.15048E-05	& 7.33671E-05	& 4.14 & 5.09 & 4.36 \\
				\hline 
			\end{tabular}
	\end{center}
	\label{tab.conv.DG}
\end{table} 

\begin{table}[!ht]  
	\caption{$L^2$ error norms and numerical convergence results for the CG approximation $\wh \in \Wh$ of density $\rho$, momentum density $\rho v_1$ and total energy density $\mathcal{E}$ at time $t=0.2$ obtained with the CG-DG scheme applied to the isentropic vortex of the compressible Euler equations. } 
	\begin{center} 
			\renewcommand{\arraystretch}{1.1}
			\begin{tabular}{ccccccc} 
				\hline
				$N_x$ & $\rho$ & $\rho v_1$ & $\mathcal{E}$ & $\mathcal{O}(\rho)$ & $\mathcal{O}(\rho v_1)$ & $\mathcal{O}(\mathcal{E})$ \\ 
				\hline
				\multicolumn{7}{c}{CG-DG scheme - $N=0$ - errors for the CG solution $\wh \in \Wh$}  \\
				\hline
				40	& 9.62492E-02	& 1.34913E-01	& 3.24378E-01	 &      &      &       \\
				60	& 6.30125E-02	& 9.07302E-02	& 2.13701E-01	 & 1.04 & 0.98 & 1.03  \\
				80  & 4.52909E-02	& 7.50398E-02	& 1.49074E-01	 & 1.15 & 0.66 & 1.25  \\
				100 & 3.84668E-02   & 5.56813E-02   & 1.29215E-01    & 0.73 & 1.34 & 0.64  \\    
				120 & 3.19719E-02   & 5.09359E-02   & 1.04345E-01    & 1.01 & 0.49 & 1.17  \\    
				\hline
				\multicolumn{7}{c}{CG-DG scheme - $N=1$ - errors for the CG solution $\wh \in \Wh$ }  \\
				\hline 
				20  & 3.30643E-02	& 5.72675E-02	& 1.22017E-01	& 	   &      &       \\
				40  & 8.23815E-03	& 1.43129E-02	& 2.98509E-02	& 2.00 & 2.00 & 1.19  \\
				60  & 3.67002E-03	& 6.36043E-03	& 1.33411E-02	& 1.99 & 2.00 & 1.13  \\
				80  & 2.03138E-03	& 3.53558E-03	& 7.39153E-03	& 2.06 & 2.04 & 1.08  \\
				100 & 1.27419E-03	& 2.27001E-03	& 4.64204E-03	& 2.09 & 1.99 & 0.87  \\
				\hline
				\multicolumn{7}{c}{CG-DG scheme - $N=2$ - errors for the CG solution $\wh \in \Wh$}  \\
				\hline
				20	& 2.42575E-03	& 3.11246E-03	& 6.50865E-03	& 	   &        &      \\ 
				30	& 5.93065E-04	& 9.69959E-04	& 1.94752E-03	& 3.47 & 2.88	& 2.98 \\
				40	& 2.16405E-04	& 4.13384E-04	& 9.15284E-04	& 3.50 & 2.96	& 2.62 \\
				60	& 8.09417E-05	& 1.12814E-04	& 2.12516E-04	& 2.43 & 3.20	& 3.60 \\
				80	& 2.63649E-05	& 5.05651E-05	& 1.05949E-04	& 3.90 & 2.79	& 2.42 \\
				\hline
				\multicolumn{7}{c}{CG-DG scheme $N=3$ - errors for the CG solution $\wh \in \Wh$}  \\
				\hline
				10	& 2.78313E-03	& 4.33882E-03	& 8.05779E-03	& 	   &      &      \\
				20	& 1.73256E-04	& 2.69514E-04	& 5.44824E-04	& 4.01 & 4.01 & 3.89 \\
				30	& 3.11688E-05	& 5.47120E-05	& 1.24977E-04	& 4.23 & 3.93 & 3.63 \\
				40	& 9.11550E-06	& 1.70552E-05	& 3.36556E-05	& 4.27 & 4.05 & 4.56 \\
				60	& 1.80827E-06	& 4.77257E-06	& 6.33037E-06	& 3.99 & 3.14 & 4.12 \\
				\hline
				\multicolumn{7}{c}{CG-DG scheme $N=4$ - errors for the CG solution $\wh \in \Wh$}  \\
				\hline
				8	& 1.36758E-03	& 1.66682E-03	& 3.83420E-03	& 	   &      &      \\
				12	& 3.58710E-04	& 5.09206E-04	& 9.44364E-04	& 3.30 & 2.92 & 3.46 \\
				16	& 6.43038E-05	& 9.53276E-05	& 2.07467E-04	& 5.97 & 5.82 & 5.27 \\
				20	& 1.99842E-05	& 2.90708E-05	& 5.95100E-05	& 5.24 & 5.32 & 5.60 \\
				30	& 2.60846E-06	& 5.18256E-06	& 6.69142E-06	& 5.02 & 4.25 & 5.39 \\
				\hline 
			\end{tabular}    
	\end{center}
	\label{tab.conv.CG}
\end{table} 

\subsection{2D circular Sod problem}

As last test case we consider a circular version of the Sod problem. The computational domain is $\Omega=[-0.5;+0.5]^2$ and the initial condition is given by
\begin{equation}
	     \q(\x,0)=\begin{cases}
		\q_L \quad \text{if} \quad r\leq R \\
		\q_R \quad \text{if} \quad r>R, \\
	\end{cases}
\end{equation}
with $\q_L$ and $\q_R$ the inner and outer states, respectively, and $r=\sqrt{x^2+y^2}$ being the radial coordinate. The initial discontinuity is placed at a radius of $R=0.25$.
For the inner state we set the density and the pressure to 
$\rho_{L}=1$ and $p_{L}=1$, while in the outer state we impose $\rho_{R}=0.125$ and $p_{R}=0.1$. The initial velocity is set to zero everywhere and the final time is $t=0.1$. We solve the problem with the new CG-DG scheme with polynomial approximation degree of $N=3$ using a very coarse unstructured triangular mesh of characteristic mesh spacing $h=1/40$ leading to a total number of 3608 triangles. In this test, the artificial viscosity regularization is needed and the indicator is globally set to $\chi_j = 1$.    
The reference solution is obtained by solving a 1D radial Euler system with source terms using a classical second-order MUSCL-Hancock scheme on a very fine mesh, as described in detail in \cite{toro-book}.  
In Fig. \ref{fig.ep2d1dcut} a comparison of the reference solution with the numerical results obtained at the aid of the new CG-DG scheme on a 1D cut along the axis $y=0$ is provided for density, radial velocity and pressure. One observes overall a good agreement. In the left panel of Fig. \ref{fig.ep2d} a 3D plot of the coarse mesh used in our simulation as well as the density color contours are shown at the final time.  

\begin{figure}[!htbp]
	\begin{center}
		\begin{tabular}{ccc} 
			\includegraphics[width=0.3\textwidth]{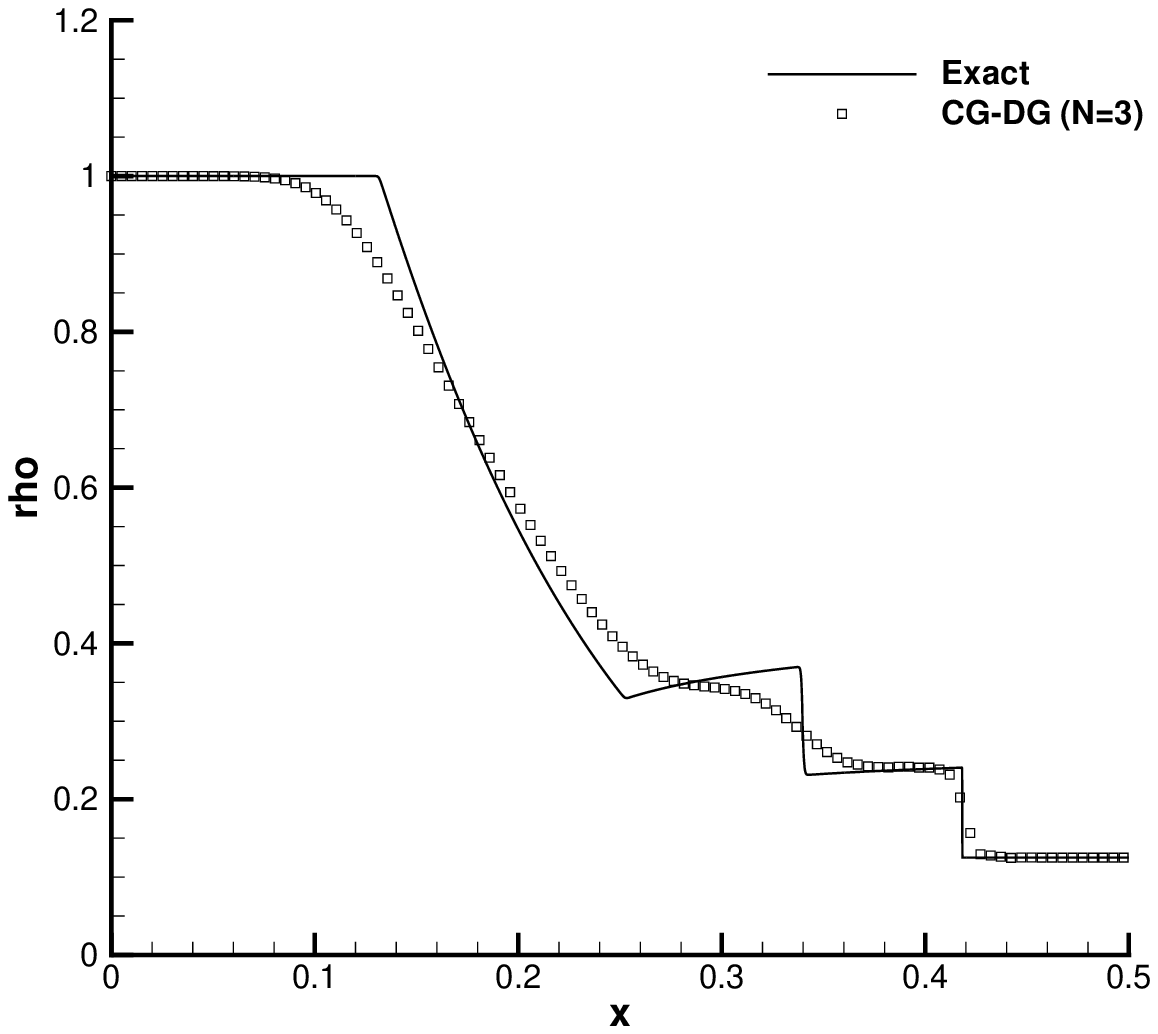}   & 
			\includegraphics[width=0.3\textwidth]{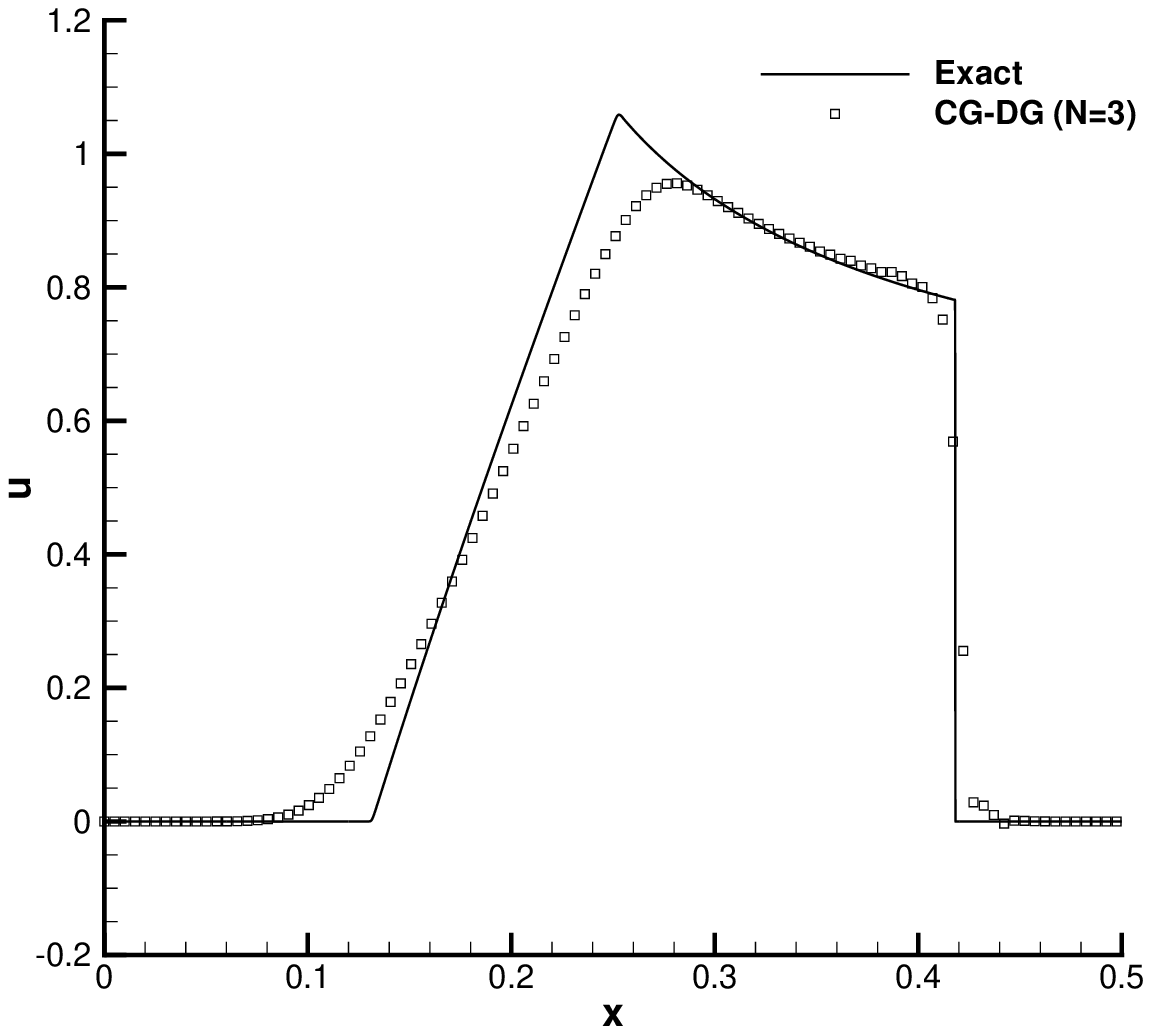}     &  
			\includegraphics[width=0.3\textwidth]{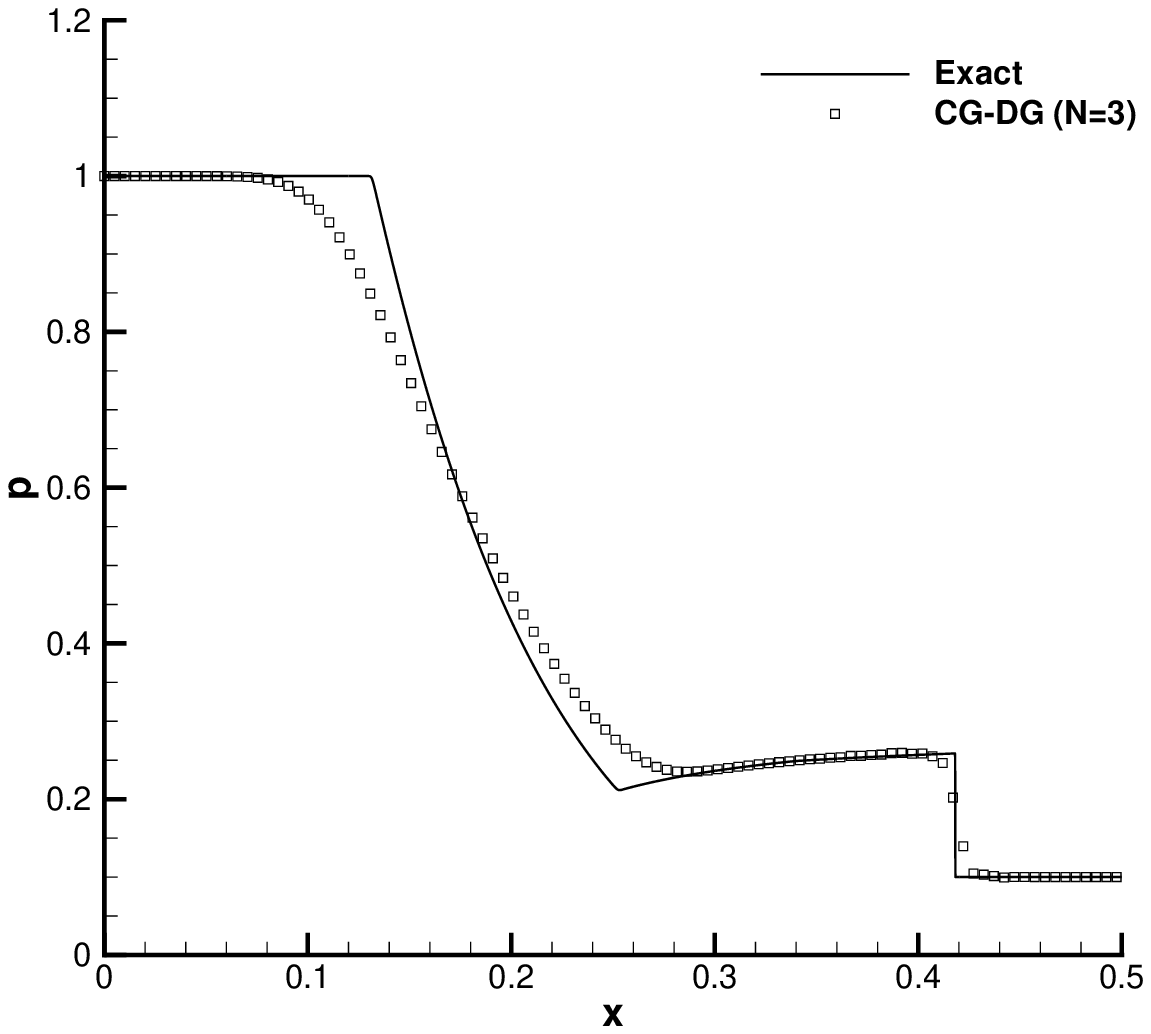}         
		\end{tabular} 
		\caption{Cut through the numerical solution along the axis $y=0$ at the final time $t=0.1$ and comparison with the reference solution for density (left), radial velocity (center) and pressure (right).} 
		\label{fig.ep2d1dcut}
	\end{center}
\end{figure}

\begin{figure}[!htbp]
	\begin{center}
		\begin{tabular}{cc} 
			\includegraphics[trim=10 10 10 10,clip,width=0.5\textwidth]{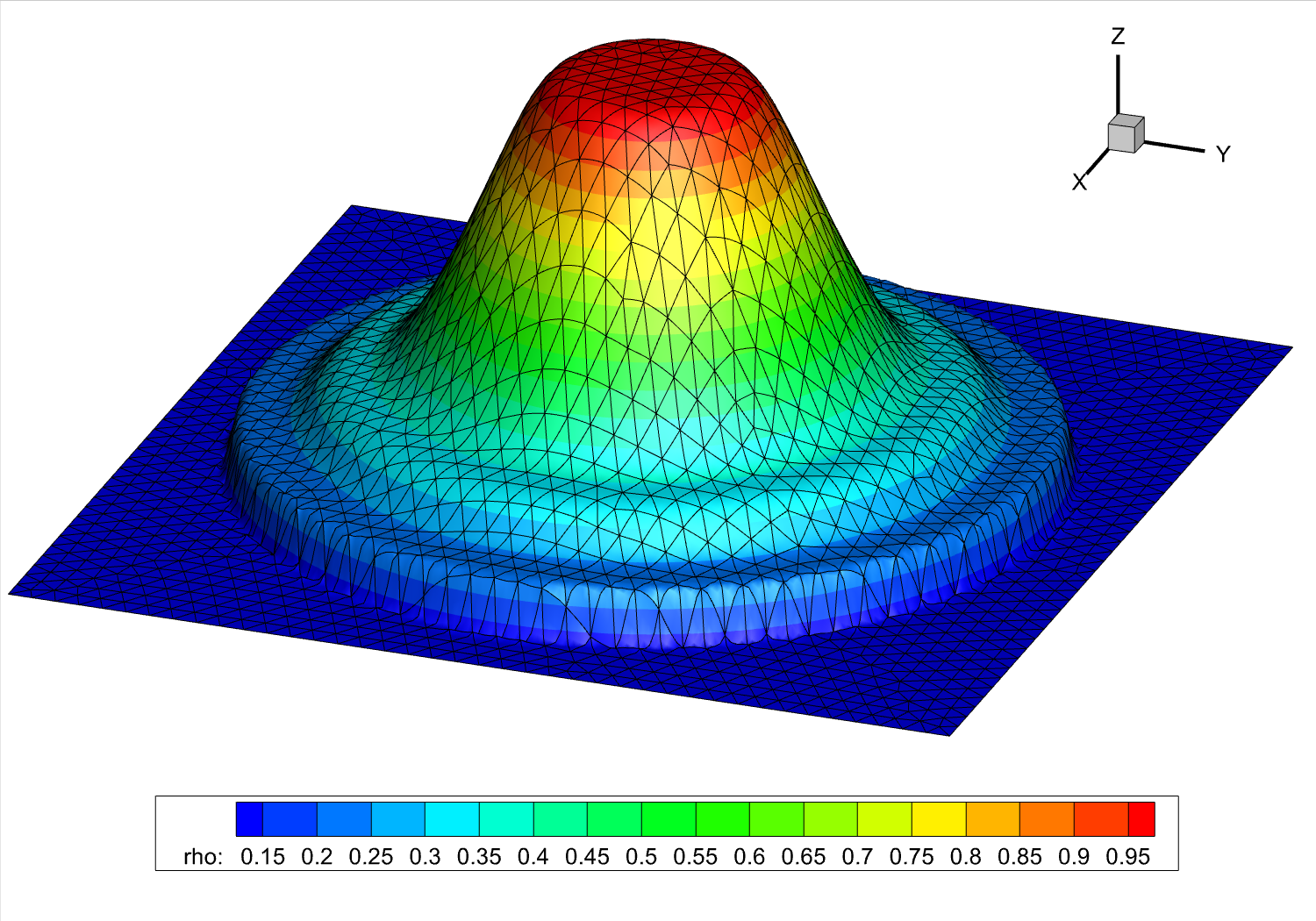}   & 
			\includegraphics[width=0.4\textwidth]{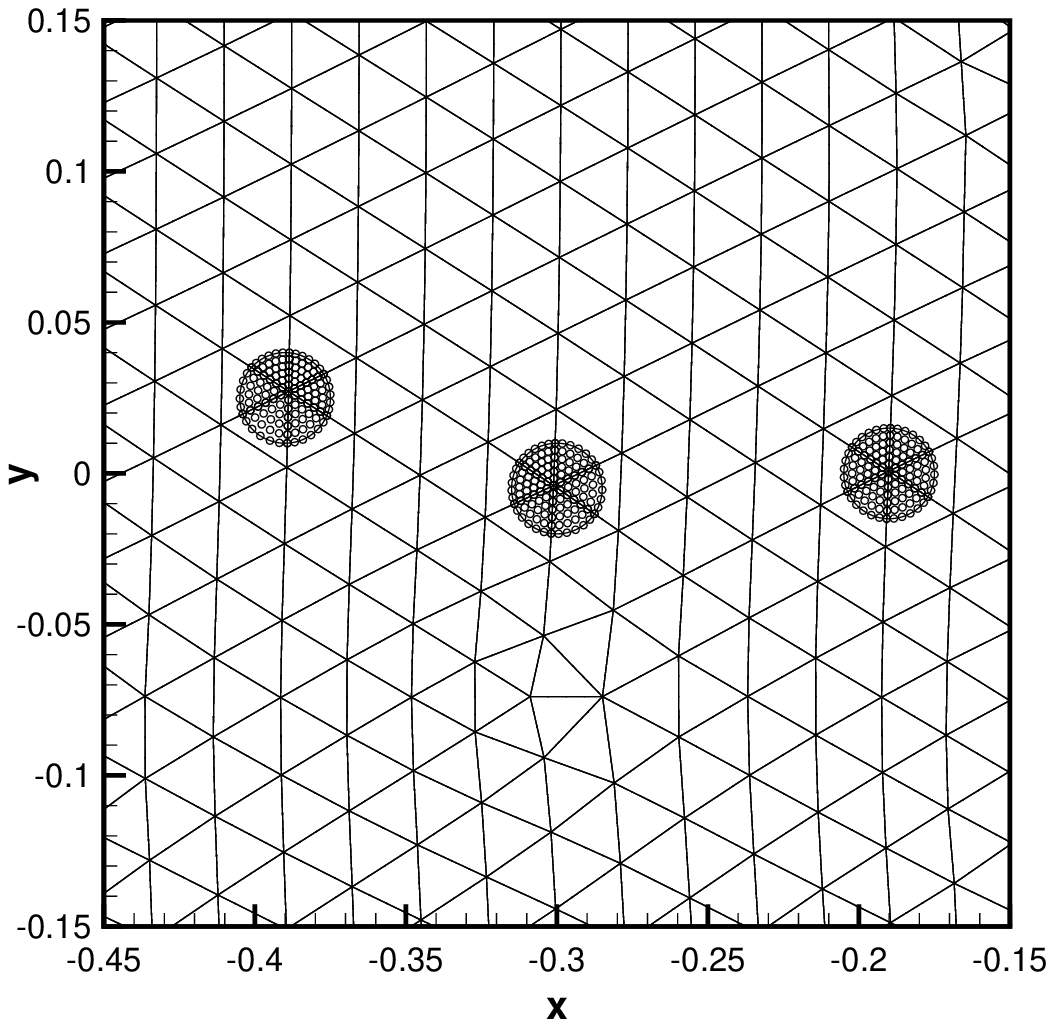}       
		\end{tabular} 
		\caption{2D circular Sod problem. Mesh and 3D view of the numerical solution $\wh$ for density at $t=0.1$ (left). Zoom into the mesh and visualization of three circular control volumes $V_1$, $V_2$ and $V_3$ and their approximation via the union of curved isoparametric triangles of degree $n_c=6$ (right).} 
		\label{fig.ep2d}
	\end{center}
\end{figure}

In order to verify the new pointwise conservation property \eqref{eqn:remi2} we place 4 observation points $\x_o$ in the domain $\Omega$. The observation points deliberately do \textit{not} coincide with any mesh point or with any location of the degrees of freedom of $\wh$. We also define 3 circular control volumes $V_1$, $V_2$ and $V_3$ of radius $r_c = 0.015$ with centers $\x_c$ in order to check the integral form of the conservation law over generic control volumes $V$, see eqn. \eqref{eq:generalConservation}. The integrals appearing in the integral form \eqref{eq:generalConservation} over arbitrary control volumes $V$ are computed by approximating the circular control volumes via a set of isoparametric triangles of approximation degree $n_c = 6$ for the geometry and using suitable one and two-dimensional Gaussian quadrature formulas \cite{stroud} in the subvolumes and on the boundary segments. We use $m_c=n_c+2$ quadrature points in 1D and $m_c^2$ quadrature points in 2D.    
In Table \ref{tab.EPerrors} we report the coordinates $\x_o$ and $\x_c$ together with the coordinates of the nearest mesh point $\x_n$ with $\x_o \neq \x_n$ and $\x_c \neq \x_n$, as well as the $L^{\infty}$ errors of \eqref{eqn:remi2} and \eqref{eq:generalConservation} observed for all Runge-Kutta stages and time steps during the entire simulation for all state variables $\q$. 
As expected, the error is of the order of machine precision, providing numerical evidence that the theoretical properties also hold in our practical implementation.        

\begin{table}  
	\caption{Locations of the control points $\x_o$ used to verify \eqref{eqn:remi2} and centers $\x_c$ of the circular control volumes $V_i$ used to verify \eqref{eq:generalConservation}. The nearest mesh point $\x_n$ is also given to show that $\x_o \neq \x_n$
		and $\x_c \neq \x_n$. The $L^\infty$ errors observed in all time steps and Runge-Kutta stages during the entire simulation are reported. } 	
	\begin{center} 
		\renewcommand{\arraystretch}{1.1}
		\begin{tabular}{cccccc} 
			\hline 
			$\x_o$ & $\x_n$ & $L^\infty(\rho)$ & $L^\infty(\rho v_1)$ & $L^\infty(\rho v_2)$ & $L^\infty(\rho E)$ \\ 
			\hline 
(-3.00E-01, 0.00E+00) & (-3.011E-01,-4.029E-03) & 3.475E-13 & 6.044E-13 & 2.506E-13 & 1.064E-12 \\
(-2.50E-01, 0.00E+00) & (-2.569E-01,-7.774E-03) & 7.394E-14 & 2.025E-13 & 7.976E-13 & 2.283E-13 \\
(-2.00E-01, 0.00E+00) & (-1.901E-01, 7.435E-04) & 2.394E-13 & 7.529E-13 & 3.360E-13 & 7.425E-13 \\
(-1.50E-01, 0.00E+00) & (-1.463E-01,-2.619E-03) & 1.554E-13 & 8.773E-13 & 2.946E-13 & 5.382E-13 \\
			\hline 
			$\x_c$ & $\x_n$ & $L^\infty(\rho)$ & $L^\infty(\rho v_1)$ & $L^\infty(\rho v_2)$ & $L^\infty(\rho E)$ \\ 								
			\hline 
(-3.00E-01, -5.00E-03) & (-3.011E-01,-4.029E-03 ) & 1.041E-11 & 2.131E-11 & 7.491E-12 & 5.656E-11 \\
(-1.90E-01,  0.00E+00) & (-1.901E-01, 7.435E-04 ) & 8.595E-13 & 1.077E-12 & 1.107E-12 & 3.379E-12 \\
(-3.90E-01,  2.50E-02) & (-3.888E-01, 2.716E-02 ) & 5.026E-10 & 8.298E-10 & 2.381E-10 & 2.062E-09 \\
			\hline 
		\end{tabular}
	\end{center}
	\label{tab.EPerrors}
\end{table}

\section{Conclusions}
\label{sec.concl} 

In this paper we have introduced a new class of continuous discontinuous Galerkin finite element schemes (CG-DG) for linear and nonlinear hyperbolic systems of partial differential equations on unstructured simplex meshes. The key idea of the method relies on two compatible discrete ansatz spaces: a \textit{discontinuous one} (DG) $\Uh$ based on piecewise polynomials of degree $N$ that is used to discretize the \textit{state vector} of the system, and a \textit{globally continuous one} $\Wh$ using piecewise polynomials of degree $N+1$ for the representation of the \textit{discrete flux field}. 
Indeed, the introduction of a globally continuous discrete flux field is a major pillar in our new method.
The continuous discrete flux field is reconstructed from the discontinuous state vector represented and evolved in the DG scheme. The proposed scheme is by construction locally pointwise conservative and satisfies both vector calculus identities at the discrete level exactly pointwise everywhere. The discrete vector calculus identities are an immediate consequence of the fact that the two discrete nabla operators appearing in the CG-DG scheme satisfy a discrete Schwarz theorem.       

Our approach can also be seen as a direct and natural extension of cell-centered finite volume schemes with vertex-based fluxes 
\cite{Despres2005,Maire2007,Despres2009,Maire2020,PHRaph2,MultidOsher,HTCLagrange,HTCLagrangeGPR} to arbitrary high order of accuracy in space. 

Thanks to a suitable artificial viscosity regularization, our method maintains all properties also in the presence of shock waves and discontinuities. 

Future work will concern the extension to more general unstructured meshes and to a discretely thermodynamically compatible formulation for nonlinear HTC systems, see e.g. \cite{HTCDG,HTCAbgrall}. We furthermore plan to extend the method to the MHD equations, the equations of nonlinear hyperelasticity in Eulerian coordinates \cite{PeshRom2014,GPRmodel,GPRmodelMHD} as well as to compressible multi-phase flows with surface tension \cite{SIST}, all of which contain divergence and/or curl constraints for some vector fields of the system. Since the new CG-DG scheme automatically preserves divergence and curl involutions exactly at the discrete level, it may be an interesting method to be tried in the context of numerical general relativity, in particular in the context of first order hyperbolic reductions of the Einstein field equations \cite{ADERCCZ4,FOCCZ4GLM,FOZ4,FOBSSNOK,FOBSSNOKDG}, which come along with a large number of curl involutions.  
We also want to investigate the use of more sophisticated and provably bound-preserving limiters, as well as energy-conserving time integrators \cite{Brugnano1,Brugnano2}.  

\section*{Acknowledgments}

M.D. and E.Z. are members of the INdAM GNCS group. M.D. also acknowledges the financial support received from the Fondazione Caritro under the project SOPHOS and the support from the European Research Council (ERC) under the European Union’s Horizon 2020 research and innovation programme, Grant agreement No. ERC-ADG-2021-101052956-BEYOND. 

\appendix
\section{Application of the CG-DG scheme to the Poisson equation} 
In this short appendix we show the application of the original staggered CG-DG method introduced in \cite{CompatibleDG1} to the Poisson equation. 
Consider the model problem 
\begin{eqnarray}
	\v & = & \nabla u, \label{eqn.lap.v} \\
	\nabla \cdot \v & = & f(\x). \label{eqn.lap.div} 
\end{eqnarray}
with $\v \cdot \n = 0$ on $\partial \Omega$ and 
which is nothing else than a first order reformulation of the Poisson equation 
\begin{equation}
	\nabla^2 u = f(\x), \label{eqn.laplace} 
\end{equation}
with $\nabla u \cdot \n = 0$ on $\partial \Omega$. 
Since \eqref{eqn.lap.v} is a simple definition, according to the choice made in \cite{CompatibleDG1}, according to which variables that are simple consequences of definitions are defined in the DG space, we choose $\vh \in \Uh$. Since the Laplace equation \eqref{eqn.laplace} can be obtained from a variational principle, following \cite{CompatibleDG1} we choose $\tilde{u}_h \in \Wh$. With the boundary condition $\v \cdot \n = 0$ on $\partial \Omega$, the weak formulation of the mixed problem above leads to 
  \begin{eqnarray}
  	\int \limits_{\Omega} \phi_i \vh d\x & = & 
  	\int \limits_{\Omega} \phi_i \nabla \tilde{u}_h d\x, \label{eqn.lap.vh} \\
  	-\int \limits_{\Omega} \nabla \psi_j \cdot \vh d\x & = & \int \limits_{\Omega}  \psi_j f(\x) \, d\x. \label{eqn.lap.divh} 
  \end{eqnarray}
But since $\vh \in \Uh$ and $\tilde{u}_h \in \Wh$ the first identity \eqref{eqn.lap.vh} holds even strongly, i.e. $\vh = \nabla \tilde{u}_h$. Inserting in \eqref{eqn.lap.divh} leads to 
\begin{equation}
	-\int \limits_{\Omega} \nabla \psi_j \cdot \nabla \tilde{u}_h \, d\x = \int \limits_{\Omega}  \psi_j f(\x) \, d\x,
\end{equation} 
which is simply the \textit{classical} continuous finite element discretization of the Laplace equation \cite{ZienTaylorZhu}. Note that the ansatz spaces of the mixed CG-DG method are chosen differently w.r.t. the Brezzi-Douglas-Marini (BDM) elements \cite{BDM}. 

\section*{Data Availability}

The data can be obtained from the authors on reasonable request. 

\section*{Conflict of interest} 
The authors declare that they have no conflict of interest.

\bibliographystyle{plain}
\bibliography{biblio,ra}
\end{document}